\documentclass[a4paper]{article}

\usepackage[T1]{fontenc}
\usepackage[utf8]{inputenc}
\usepackage{amsmath,amssymb,amsthm}
\usepackage[round]{natbib}
\usepackage{graphicx}
\usepackage{subcaption}
\usepackage{booktabs}
\usepackage[scr=rsfs]{mathalfa}
\usepackage{stmaryrd}
\usepackage{authblk}
\usepackage[colorlinks=true,linkcolor=blue,urlcolor=blue,citecolor=blue,anchorcolor=blue]{hyperref}
\usepackage{geometry}
\usepackage{setspace}
\allowdisplaybreaks

\newtheorem{thm}{Theorem}[section]
\newtheorem{prop}[thm]{Proposition}
\newtheorem{lem}[thm]{Lemma}
\newtheorem{cor}[thm]{Corollary}
\theoremstyle{definition}
\newtheorem{defn}[thm]{Definition}
\theoremstyle{remark}
\newtheorem{rem}[thm]{Remark}

\newcommand{\E}{\mathbb{E}}
\renewcommand{\P}{\mathbb{P}}
\newcommand{\V}{\mathrm{Var}}
\newcommand{\Cov}{\mathrm{Cov}}

\newcommand{\tr}{\mathrm{tr}}

\newcommand{\R}{\mathbb{R}}

\newcommand{\independent}{\mathrel{\perp\mkern-10mu\perp}}

\renewcommand{\i}{\imath}

\newcommand{\Gw}{\mathsf{G}}
\newcommand{\Law}{\mathscr{L}}

\newcommand{\F}{\mathcal{F}}

\newcommand{\W}{\mathcal{W}}

\newcommand{\bs}[1]{\boldsymbol{#1}}

\newcommand{\RvS}[1]{}
\newcommand{\BG}[1]{}
\newcommand{\YS}[1]{}
\newcommand{\FRap}[1]{}

\title{Explicit Gamma--Stein Bounds for Satterthwaite's Approximation}

\author[1]{Gabriel Bailly}
\author[2]{Ferdinand Rapin}
\author[2,3]{Yvik Swan}
\author[1]{Rainer von Sachs}
\affil[1]{LIDAM, UCLouvain}
\affil[2]{Département de mathématiques, Université libre de Bruxelles}
\affil[3]{Vrije Universiteit Brussel}

\date{\today}

\begin{document}
\maketitle
\begin{abstract}
  We derive explicit Gamma--Stein bounds for the Satterthwaite
  approximation of weighted sums of independent chi-square random
  variables and, more generally, of independent Gamma random
  variables. The approximating Gamma distribution is chosen by
  matching the first two moments. Our approach relies on new kernel
  representations of the solution to the Gamma--Stein equation and of
  its first derivatives on the positive real line. These
  representations yield weighted Stein-factor bounds adapted to the
  moment-matching structure and show that the quality of the
  approximation is controlled directly by the discrepancies between
  the individual scale parameters and the matched scale parameter.
  Kolmogorov bounds are obtained by smoothing. The results provide
  quantitative guarantees for the widely used Satterthwaite
  approximation and are applied to pooled variances, sample variances
  of stationary Gaussian time series, Wishart traces and sums of
  Wishart traces, and high-dimensional repeated-measures statistics.
\end{abstract}

\medskip

\noindent\textbf{Keywords:} Satterthwaite approximation; Gamma approximation; Stein's method; weighted chi-square distribution; Wishart distribution

\medskip
\noindent\textbf{Mathematics Subject Classification (2020):} 60F05, 60E05, 62E17, 62H10

\section*{Introduction}\label{sec:intro}

Satterthwaite's approximation is a classical moment-matching procedure
for approximating the distribution of a weighted sum of independent
chi-square random variables by a scaled chi-square distribution. Its
origins go back to \citet{Welch1938}, in the context of the two-sample
\(t\)-test with unequal population variances. In that setting, Welch
proposed to replace the exact distribution of the weighted sum
appearing in the test statistic by a chi-square distribution whose
degrees of freedom are chosen to reproduce the first two moments. This
idea was subsequently generalized by
\citet{Satterthwaite1941,Satterthwaite1946} to arbitrary weighted sums
of independent chi-square random variables. In modern terminology, the
approximation consists in replacing such a weighted sum by a scaled
chi-square random variable, with the scale and the degrees of freedom
chosen so that the approximating distribution has the same mean and
variance as the original statistic.

The appeal of this approximation is its simplicity. Exact formulas for
the density of weighted chi-square sums are available, but they
typically involve infinite series or integral representations and are
therefore not always convenient for statistical practice; see, for
instance, \cite{Moschopoulos1985}. Beyond the two-moment Satterthwaite
approximation, several higher-order moment-matching procedures have
been proposed, including the Hall--Buckley--Eagleson, Wood \(F\), and
Lindsay--Pilla--Basak methods; see \cite{Bodenham2015} for a
comparative review. Nevertheless, the original Satterthwaite
approximation remains widely used because it is explicit,
interpretable, and easy to implement.

Satterthwaite-type approximations are especially useful when the exact
null distribution is available in principle but too complicated to
serve as a practical reference distribution. This occurs for many
quadratic-form statistics, whose laws can be expressed as weighted
sums of independent chi-square, or more generally Gamma, random
variables. The examples developed below illustrate such situations:
they include pooled variances, sample variances of stationary Gaussian
time series, traces of Wishart random matrices, sums of independent
Wishart traces, and high-dimensional repeated-measures
designs. Together, these examples cover classical variance estimation,
dependence, multivariate covariance analysis, and modern
high-dimensional inference, illustrating why such approximations
remain important in statistical practice.

Despite this broad use, quantitative theoretical guarantees for
Satterthwaite's approximation remain relatively limited. Classical approaches
often work directly with distribution functions or through asymptotic
expansions. The question addressed here is slightly different: how close is
the law of a weighted chi-square, or more generally a weighted Gamma sum, to
its moment-matched Gamma approximation when both variables are kept on their
original scale? This is the formulation that appears naturally in statistical
applications, where the Satterthwaite approximation is used directly as a
reference distribution.

Stein's method provides a natural framework for this problem. Gamma
approximation by Stein's method goes back at least to \citet{Luk1994}, and
was subsequently developed in several directions, including chi-square
approximation via Stein's method \citep{Pickett2004}, Stein-factor estimates
for Gamma approximation \citep{Gaunt2017,Dobler2018}, and general
density-based constructions of Stein operators
\citep{ley2017distances,ley2017stein}. The work of \citet{Dobler2018} also
belongs to the Malliavin--Stein line of research on Gamma approximation,
where Gamma or centered-Gamma Stein equations are used to prove non-central
limit theorems for nonlinear functionals. These works provide powerful
general tools, while the present paper focuses specifically on the
moment-matching structure of Satterthwaite's approximation for positive
weighted Gamma sums.

A related approach, closer in spirit to our setting, was proposed by
\citet{LiuXia2021}, who use a characterization of the Gamma distribution
through size biasing and zero biasing. Their results yield bounds for Gamma
approximation of sums of independent nonnegative random variables, and in
particular their Corollary~3.4 applies to convolutions of independent Gamma
random variables. Our aim is complementary: we seek bounds stated directly in
terms of the scale discrepancies between the summands and the
moment-matched Gamma approximation.

We now introduce the notation used throughout the paper. We write \(X\cong Y\)
when \(X\) and \(Y\) have the same distribution, that is,
\(\Law(X)=\Law(Y)\), and we write \(X\independent Y\) for independence. The
notation
\[
  (X_i)_{i=1}^n \overset{\mathrm{iid}}{\sim} P
\]
means that \(X_i\sim P\) for every \(i\in\{1,\ldots,n\}\) and that
\(X_1,\ldots,X_n\) are mutually independent.

For \(a,b>0\), we denote by \(\mathcal G_{a,b}\) the Gamma distribution with
shape \(a\) and scale \(b\), namely the distribution with density
\[
  p_{a,b}(x)
  =
  \frac{1}{\Gamma(a)b^a}x^{a-1}e^{-x/b}
  \mathbf{1}_{\{x>0\}} .
\]
We write \(\mathcal Z_1\) for the class of continuously differentiable
functions \(h:(0,\infty)\to\mathbb R\) such that \(\|h'\|_\infty\le1\), where $\|\cdot\|_\infty$ denotes the usual supremum norm. For
positive random variables \(X\) and \(Y\) with finite first moments, define
\[
  d_1(X,Y)
  :=
  \sup_{h\in\mathcal Z_1}
  \left|
    \E[h(X)]-\E[h(Y)]
  \right|.
\]
This is the usual Wasserstein distance of order one, written here in the
smooth test-function formulation that is convenient for Stein's method.

We also write \(\mathcal Z_2\) for the class of twice continuously differentiable
functions \(h:(0,\infty)\to\mathbb R\)  such that \(\|h''\|_\infty\le1\). For positive random variables \(X\) and
\(Y\) with finite second moments, set
\[
  d_2(X,Y)
  :=
  \sup_{h\in\mathcal Z_2}
  \left|
    \E[h(X)]-\E[h(Y)]
  \right|.
\]
This is the Zolotarev ideal metric of order two (see \cite{zolotarev1979ideal}), again expressed in the convenient smooth test-function formulation. It is finite in the natural
setting of this paper, where the two variables have matching first two
moments.

We shall also use the Kolmogorov distance
\[
  d_{\mathrm{Kol}}(X,Y)
  :=
  \sup_{z\in\mathbb R}
  \left|
    \P[X\le z]-\P[Y\le z]
  \right|.
\]
Using a smoothing procedure (e.g. \citet[p.48]{Chen2011}, see also \citet{Gaunt2023}) we shall use the following
standard inequality to pass from Wasserstein to Kolmogorov bounds. If \(Y\)
has a density \(p_Y\) satisfying
\[
  C_Y^1:=\sup_{x\in\mathbb R}p_Y(x)<\infty,
\]
then
\begin{equation}
  d_{\mathrm{Kol}}(X,Y)
  \le
  \sqrt{2C_Y^1\,d_1(X,Y)} .
  \label{kolmosmoo}
\end{equation}

These three distances play complementary roles. The Kolmogorov distance is
closest to classical testing, since it controls distribution functions,
critical values, and \(p\)-values. The Wasserstein distance \(d_1\) controls
expectations of Lipschitz transformations and provides, through smoothing, a
route to Kolmogorov bounds. The distance \(d_2\) is especially natural for
Satterthwaite's approximation: since the approximating Gamma law is chosen by
matching the first two moments, affine components are already accounted for,
and the remaining discrepancy is governed by second-order smoothness.

The contributions of the paper are twofold. First, we develop a harmonised
and fully explicit version of Stein's method for the Gamma distribution on
\((0,\infty)\). The approach is based on kernel representations of the Stein
solution and of its first two derivatives. It is self-contained, keeps the
weights generated by the Gamma operator visible, and gives constants that can
be tracked explicitly. This yields weighted Stein factors adapted to the
distances \(d_1\) and \(d_2\).

Several uniform bounds for solutions of the Gamma--Stein equation and their
derivatives are already available in the literature, starting with the
generator approach developed in \cite{Luk1994,Pickett2004,GauntThesis} and
given in detail in \cite{Gaunt2017,Dobler2018}. The perspective taken here is
different. Rather than deriving only global derivative bounds, we give
pointwise kernel representations for the solution and for the derivatives
entering the Stein identity. These representations make the dependence on the
distribution parameters explicit and transparent, which is crucial for the
asymptotic analysis carried out later in the paper. In particular, they allow
us to obtain sharp large-shape asymptotics together with simple explicit
bounds valid in the corresponding asymptotic regimes.

Second, we apply this framework to Satterthwaite's approximation for sums of
independent Gamma random variables. Let
\[
  \Gw_N=\sum_{i=1}^N G_i,
  \qquad
  G_i\sim\mathcal G_{\alpha_i,\beta_i},
  \qquad
  i=1,\ldots,N,
\]
where \(G_1,\ldots,G_N\) are independent. Let
\(U_N\sim\mathcal G_{\alpha,\beta}\) be chosen by matching the mean and
variance of \(\Gw_N\), namely
\[
  \alpha\beta=\sum_{i=1}^N\alpha_i\beta_i,
  \qquad
  \alpha\beta^2=\sum_{i=1}^N\alpha_i\beta_i^2.
\]
Writing
\[
  A_N:=
  \max_{1\le i\le N}
  \left\{
    \left|\frac{\beta_i}{\beta}-1\right|\beta_i
  \right\},
  \qquad
  B_N:=
  \max_{1\le i\le N}
  \left\{
    \left|\frac{\beta_i}{\beta}-1\right|\beta_i^2
  \right\},
\]
we prove the non-asymptotic bounds
\[
  d_1(\Gw_N,U_N)
  \le
  2A_N
\]
and
\[
  d_2(\Gw_N,U_N)
  \le
  2B_N+\alpha\beta e_\alpha A_N .
\]
Thus the quality of the approximation is controlled directly by the
discrepancies between the individual scale parameters \(\beta_i\) and the
matched scale parameter \(\beta\). Combining the \(d_1\)-estimate with
\eqref{kolmosmoo} also gives a Kolmogorov bound whenever the matched shape
parameter satisfies \(\alpha>1\). To the best of our knowledge, these are the
first Gamma--Stein bounds stated directly for the Satterthwaite
moment-matched approximation in terms of the parameters of the underlying
weighted Gamma sum.

\paragraph{Overview of the paper.}
Section~\ref{sec:steingamma} develops the Gamma--Stein framework used
throughout the paper. We derive the Stein equation, its kernel solution, and
the weighted Stein-factor estimates needed for the \(d_1\)- and
\(d_2\)-bounds. Section~\ref{sec:main_result} applies these estimates to sums
of independent Gamma random variables and proves the main non-asymptotic
bounds. Section~\ref{sec:satt} specializes the abstract result to
Satterthwaite approximations in statistical examples, including pooled
variances, Gaussian time series, Wishart traces, sums of independent Wishart
traces, and high-dimensional repeated-measures designs. Technical bounds for
the Gamma kernels and the explicit constants are collected in  Appendix \ref{appendix:steinstuff}.

\section{Kernel-based Gamma--Stein's method on the positive real line}
\label{sec:steingamma}

In this section we develop the Gamma--Stein machinery used in the proof of
the main approximation result. We start from the classical Gamma
integration-by-parts identity, introduce the corresponding Stein equation,
and then derive kernel representations for the solution and its first two
derivatives.

Throughout, \(a>0\) and \(b>0\) are fixed. We write
\(L^1(\mathcal G_{a,b})\) for the Borel functions
\(h:(0,\infty)\to\mathbb R\) such that
\(\mathbb E[|h(X)|]<\infty\) for \(X\sim\mathcal G_{a,b}\), and set
\[
  \mathcal G_{a,b}h:=\int_0^\infty h(u)p_{a,b}(u)\,du.
\]
For \(x>0\), write
\[
  P_{a,b}(x):=\int_0^x p_{a,b}(u)\,du,
  \qquad
  \overline P_{a,b}(x):=\int_x^\infty p_{a,b}(u)\,du.
\]

The basic Gamma--Stein machinery follows from the density approach; see, for
instance, \cite{dobler2015stein,ley2017distances,ley2017stein}. We first
record the integration-by-parts identity and the corresponding
characterization of the Gamma distribution.

\begin{thm}
\label{thm:gamma-ibp}
Let \(X\sim\mathcal G_{a,b}\). If \(f:(0,\infty)\to\mathbb R\) is absolutely
continuous and \(x\mapsto xf'(x)\) belongs to \(L^1(\mathcal G_{a,b})\), then
\(x\mapsto (x-ab)f(x)\) belongs to \(L^1(\mathcal G_{a,b})\) and
\begin{equation}
\label{eq:gamma-ibp}
  \mathbb E[(X-ab)f(X)]
  =
  b\,\mathbb E[Xf'(X)] .
\end{equation}
Conversely, if a real-valued random variable \(X\) satisfies
\eqref{eq:gamma-ibp} for some \(a,b>0\) and every smooth compactly supported
function \(f:\mathbb R\to\mathbb R\), then \(X\sim\mathcal G_{a,b}\).
\end{thm}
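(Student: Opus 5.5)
The plan has two parts. The forward identity follows from a Fubini argument based on the density identity, and the converse from solving a first-order ODE for the density.

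\emph{Forward direction.} The key observation is that the Gamma density satisfies
\[
  (x-ab)p_{a,b}(x)=-b\,\bigl(xp_{a,b}(x)\bigr)' ,
\]
which one checks directly from \(x p_{a,b}(x)\propto x^{a}e^{-x/b}\). Integrating this identity over \((0,y)\) and over \((y,\infty)\), and using \(xp_{a,b}(x)\to0\) at \(0\) and at \(\infty\), gives
\[
  \int_0^y (x-ab)p_{a,b}(x)\,dx=-b\,y\,p_{a,b}(y)=-\int_y^\infty (x-ab)p_{a,b}(x)\,dx .
\]
Next I fix a point \(x_0>0\) and write \(f(x)=f(x_0)+\int_{x_0}^x f'(y)\,dy\). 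The constant term contributes nothing, because \(\E[X-ab]=0\). For the remaining term, I split according to \(x>x_0\) and \(x<x_0\) and exchange the order of integration. For \(x>x_0\) the relevant integrand is \(\mathbf 1_{x_0<y<x}\), and for \(x<x_0\) it is \(-\mathbf 1_{x<y<x_0}\).

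Each piece then becomes \(\int f'(y)\bigl(\int(x-ab)p_{a,b}(x)\,dx\bigr)dy\), with the inner integral taken over \((y,\infty)\) or \((0,y)\) respectively. By the display above, both pieces reduce to \(b\int f'(y)\,y\,p_{a,b}(y)\,dy\). Summing them gives \(\E[(X-ab)f(X)]=b\,\E[Xf'(X)]\).

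Fubini requires absolute integrability, and this is the main obstacle. I handle it by running the same computation with \(|x-ab|\) and \(|f'|\) in place of \(x-ab\) and \(f'\). Away from \(x=ab\), the tail integral \(\int_y^\infty |x-ab|p_{a,b}\) equals \(b\,y\,p_{a,b}(y)\) exactly when \(y\ge ab\). Choosing \(x_0=ab\) makes the sign of \(x-ab\) constant on each region, so the bound becomes \(\int|f'(y)|\,b\,y\,p_{a,b}(y)\,dy<\infty\). This uses the hypothesis \(xf'\in L^1\), and it simultaneously shows that \((x-ab)f(x)\in L^1\), up to the constant term. The constant term is integrable since \(X\) has all moments.

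\emph{Converse.} Let \(\mu=\Law(X)\), and suppose \eqref{eq:gamma-ibp} holds for all \(f\in C_c^\infty(\mathbb R)\). In the sense of distributions this says
\[
  (x-ab)\mu+b\,(x\mu)'=0 .
\]
The first step is to show \(\mu\) has no mass on \((-\infty,0]\). On \((-\infty,0)\) the same ODE holds, and its only local solutions are multiples of \(|x|^{a-1}e^{-x/b}\), which grows like \(e^{|x|/b}\) as \(x\to-\infty\). Such a multiple cannot come from a probability measure unless the constant is zero.

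An atom at \(0\) I rule out by testing with bump functions \(f_\varepsilon\) concentrated near \(0\). If \(\mu(\{0\})=m\), then the left side of \eqref{eq:gamma-ibp} tends to \(-ab\,m\), while the right side \(b\,\E[Xf_\varepsilon'(X)]\) tends to \(0\). Here \(xf_\varepsilon'\) is bounded and vanishes at \(0\), and any absolutely continuous part near \(0\) is handled by dominated convergence. Since \(a>0\), this forces \(m=0\).

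On \((0,\infty)\), the distributional ODE \((x\mu)'=-\tfrac{x-ab}{b}\mu\) shows that \(x\mu\) has a locally bounded-variation, and then absolutely continuous, density \(g\) with \(g'=-\tfrac{x-ab}{bx}\,g\). Solving gives \(g(x)=c\,x^{a}e^{-x/b}\), so \(\mu\) has density \(c\,x^{a-1}e^{-x/b}\). Normalization then forces \(\mu=\mathcal G_{a,b}\).

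The delicate points of the converse are the regularity bootstrap from measure to density and the exclusion of mass at \(0\). To establish regularity, I would use the standard fact that a distribution whose derivative is a measure is a BV function, and iterate once.
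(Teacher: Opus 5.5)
Your proof is correct, and it takes a different route from the one the paper points to for the converse. The paper does not prove this theorem itself; it records it as the standard density-approach characterization and cites the literature.

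\textbf{Forward direction.} This is exactly the density-approach argument. You use the identity $(bxp_{a,b}(x))'=(ab-x)p_{a,b}(x)$, which the paper also uses in its appendix, together with Fubini. Choosing $x_0=ab$ makes the Tonelli bound equal $b\,\E[X|f'(X)|]$, so you get integrability of $(x-ab)f$ at the same time as the identity.

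\textbf{Converse.} Here you depart from the density approach. That approach would substitute into the identity the explicit Stein solution
$f_h(x)=\frac{1}{bxp_{a,b}(x)}\int_0^x\{h(t)-\mathcal G_{a,b}h\}p_{a,b}(t)\,dt$,
the form derived in the proof of Lemma~\ref{lem:solution}, and read off $\E[h(X)]=\mathcal G_{a,b}h$.
\begin{itemize}
\item \emph{What the Stein-solution route buys.} It is constructive and ties the characterization to the Stein equation used throughout the paper.
\item \emph{What it costs.} The functions $f_h$ are neither compactly supported nor smooth, and are not defined on $(-\infty,0]$. So one needs cutoff and mollification arguments to pass from $C_c^\infty(\R)$ to $f_h$, plus a separate proof that $X>0$ almost surely.
\item \emph{What your route buys.} The distributional ODE argument handles an arbitrary real-valued $X$ and the small test class directly. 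The negative half-line is excluded by non-integrability of $|x|^{a-1}e^{|x|/b}$, the atom at $0$ by rescaled bumps, and the positive half-line is identified up to normalization.
\end{itemize}

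\textbf{Two simplifications.}
\begin{itemize}
\item \emph{Atom at $0$.} Take $f_\varepsilon(x)=\phi(x/\varepsilon)$ explicitly. Then $xf_\varepsilon'(x)=(x/\varepsilon)\phi'(x/\varepsilon)$ is uniformly bounded and tends to $0$ at every $x$. Dominated convergence under $\mu$ then settles the atom, with no need to single out an absolutely continuous part of $\mu$.
\item \emph{Regularity.} The BV bootstrap can be skipped. With $E(x)=|x|^{-a}e^{x/b}$, one checks $(E\,x\mu)'=0$ in $\mathcal D'$ on each half-line. Hence $x\mu=c\,|x|^{a}e^{-x/b}$ there directly.
\end{itemize}
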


The Stein equation associated with \eqref{eq:gamma-ibp} is obtained by
inverting the operator \(f\mapsto bx f'(x)-(x-ab)f(x)\).

\begin{defn}
\label{def:steinequ}
Let \(h\in L^1(\mathcal G_{a,b})\). The Gamma--Stein equation associated with
\(h\) is
\begin{equation}
\label{eq:stein-equation-pos}
  bx f'(x)-(x-ab)f(x)
  =
  h(x)-\mathcal G_{a,b}h,
  \qquad x>0.
\end{equation}
A solution of \eqref{eq:stein-equation-pos} is a locally absolutely
continuous function \(f_{h;a,b}:(0,\infty)\to\mathbb R\) for which there
exists a version of its a.e. derivative \(f'_{h;a,b}\) such that
\eqref{eq:stein-equation-pos} holds for every \(x>0\).
\end{defn}

We then collect the kernels and kernel bounds needed in the sequel. For
\(y>0\), define the weighted lower and upper Gamma--Mills ratios by
\[
  \underline M_{a,b}(y):=\frac{P_{a,b}(y)}{byp_{a,b}(y)},
  \qquad
  \overline M_{a,b}(y):=\frac{\overline P_{a,b}(y)}{byp_{a,b}(y)}.
\]
We begin with an  elementary property which seems to be new; it follows from a simple integral
representation given in Appendix~\ref{appendix:steinstuff}.
Related convexity and reciprocal convexity properties of Gamma--Mills ratios
can be found in \cite{Baricz2012MillsReciprocalConvexity}.

\begin{lem}
\label{lem:gamma-mills-ratios}
The function \(y\mapsto \underline M_{a,b}(y)\) is absolutely monotone on
\((0,\infty)\), while \(y\mapsto \overline M_{a,b}(y)\) is completely
monotone on \((0,\infty)\). Equivalently, for every \(n\in\mathbb N\) and
\(y>0\),
\[
  \underline M_{a,b}^{(n)}(y)\ge0,
  \qquad
  (-1)^n\overline M_{a,b}^{(n)}(y)\ge0.
\]
In particular, \(\underline M_{a,b}\) is increasing and convex with
increasing second derivative, whereas \(\overline M_{a,b}\) is decreasing and
convex with decreasing second derivative.
\end{lem}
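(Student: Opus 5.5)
We will obtain explicit integral representations of both ratios, after a change of variables, as integrals of exponentials in \(y\); the monotonicity properties then follow by differentiating under the integral sign. Since \(p_{a,b}(y)\propto y^{a-1}e^{-y/b}\), the normalizing constants cancel in both ratios and we may write
\[
  \underline M_{a,b}(y)=\frac{1}{b}\int_0^y \frac{u^{a-1}}{y^{a}}e^{(y-u)/b}\,du,
  \qquad
  \overline M_{a,b}(y)=\frac{1}{b}\int_y^\infty \frac{u^{a-1}}{y^{a}}e^{-(u-y)/b}\,du .
\]

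For the lower ratio we substitute \(u=ty\) with \(t\in(0,1)\), which gives
\[
  \underline M_{a,b}(y)=\frac1b\int_0^1 t^{a-1}e^{y(1-t)/b}\,dt .
\]
For each fixed \(t\), the map \(y\mapsto e^{y(1-t)/b}\) is absolutely monotone, because its \(n\)-th derivative equals \(((1-t)/b)^n e^{y(1-t)/b}\ge0\). On any compact subinterval of \((0,\infty)\) the derivatives are dominated by \(C\,t^{a-1}\), which is integrable on \((0,1)\) since \(a>0\). Differentiation under the integral is therefore justified, and
\[
  \underline M^{(n)}_{a,b}(y)=\frac1b\int_0^1 t^{a-1}\Bigl(\frac{1-t}{b}\Bigr)^n e^{y(1-t)/b}\,dt\ge0 .
\]

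For the upper ratio we substitute \(u=y+s\) with \(s>0\), which gives
\[
  \overline M_{a,b}(y)=\frac1b\int_0^\infty \frac{(y+s)^{a-1}}{y^{a}}e^{-s/b}\,ds .
\]
Here we do not obtain a mixture of exponentials directly. The cleaner route is the scaling \(u=ty\) with \(t\in(1,\infty)\), which gives
\[
  \overline M_{a,b}(y)=\frac1b\int_1^\infty t^{a-1}e^{-y(t-1)/b}\,dt .
\]
This is a Laplace-type mixture of decaying exponentials \(e^{-y(t-1)/b}\). Each of these is completely monotone, since \((-1)^n\partial_y^n e^{-y(t-1)/b}=((t-1)/b)^n e^{-y(t-1)/b}\ge0\).

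The only real obstacle is to justify differentiation under the integral for this upper representation. For \(y\ge y_0>0\), the integrand of the \(n\)-th derivative is bounded by \(t^{a-1}((t-1)/b)^n e^{-y_0(t-1)/b}\), which is integrable on \((1,\infty)\) for every \(a>0\). Dominated convergence then yields
\[
  (-1)^n\overline M^{(n)}_{a,b}(y)=\frac1b\int_1^\infty t^{a-1}\Bigl(\frac{t-1}{b}\Bigr)^n e^{-y(t-1)/b}\,dt\ge0 .
\]
By Bernstein's theorem this also identifies \(\overline M_{a,b}\) as a genuine Laplace transform of a positive measure. Finally, the stated consequences are the cases \(n=1,2,3\): the lower ratio has nonnegative first, second and third derivatives, and the upper ratio has alternating signs. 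Both statements are in fact strict, because the integrands are positive on a set of positive measure.
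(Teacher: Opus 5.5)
Your proposal is correct and follows the paper's proof essentially verbatim. The substitution $u=ty$ gives the representations $\frac1b\int_0^1 t^{a-1}e^{y(1-t)/b}\,dt$ and $\frac1b\int_1^\infty t^{a-1}e^{-y(t-1)/b}\,dt$, and differentiating under the integral sign, justified by the same local domination, yields the sign conditions; the only differences are cosmetic (you keep $b$ general instead of reducing to $b=1$ by scaling, and the abandoned $u=y+s$ detour and the appeal to Bernstein's theorem are not needed).
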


In particular the lower and upper Gamma--Mills ratios are therefore three times
continuously differentiable on \((0,\infty)\). For \(x,y>0\), define
\begin{align}
K_{a,b}^{0,1}(x,y)
&=
-\overline M_{a,b}(y)
 \frac{P_{a,b}(x)}{p_{a,b}(x)}
 \mathbf{1}_{\{x\le y\}}
-\underline M_{a,b}(y)
 \frac{\overline P_{a,b}(x)}{p_{a,b}(x)}
 \mathbf{1}_{\{x>y\}},
\label{eq:kernel-01}
\\
K_{a,b}^{1,1}(x,y)
&=
-\overline M_{a,b}'(y)
 \frac{P_{a,b}(x)}{p_{a,b}(x)}
 \mathbf{1}_{\{x\le y\}}
-\underline M_{a,b}'(y)
 \frac{\overline P_{a,b}(x)}{p_{a,b}(x)}
 \mathbf{1}_{\{x>y\}},
\label{eq:kernel-11}
\\
K_{a,b}^{2,1}(x,y)
&=
-\overline M_{a,b}''(y)
 \frac{P_{a,b}(x)}{p_{a,b}(x)}
 \mathbf{1}_{\{x\le y\}}
-\underline M_{a,b}''(y)
 \frac{\overline P_{a,b}(x)}{p_{a,b}(x)}
 \mathbf{1}_{\{x>y\}}.
\label{eq:kernel-21}
\end{align}
We also use the second-order kernels
\begin{align}
K_{a,b}^{2,2}(x,y)
&=
\overline M_{a,b}''(y)
 \frac{\underline{\mathrm{P}}^1_{a,b}(x)}{p_{a,b}(x)}
 \mathbf{1}_{\{x\le y\}}
-\underline M_{a,b}''(y)
 \frac{\overline{\mathrm{P}}^1_{a,b}(x)}{p_{a,b}(x)}
 \mathbf{1}_{\{x>y\}},
\label{eq:kernel-22}
\\
K_{a,b}^{3,2}(x,y)
&=
\overline M_{a,b}'''(y)
 \frac{\underline{\mathrm{P}}^1_{a,b}(x)}{p_{a,b}(x)}
 \mathbf{1}_{\{x\le y\}}
-\underline M_{a,b}'''(y)
 \frac{\overline{\mathrm{P}}^1_{a,b}(x)}{p_{a,b}(x)}
 \mathbf{1}_{\{x>y\}},
\label{eq:kernel-32}
\end{align}
where
\[
  \underline{\mathrm{P}}^1_{a,b}(x):=\int_0^x P_{a,b}(u)\,du,
  \qquad
  \overline{\mathrm{P}}^1_{a,b}(x):=\int_x^\infty \overline P_{a,b}(u)\,du.
\]
Finally, for \(y>0\), set
\begin{align}
C_{a,b}^{1}(y)
&:=
- \overline M_{a,b}'(y)\underline{\mathrm{P}}^1_{a,b}(y)
+
\underline M_{a,b}'(y)\overline{\mathrm{P}}^1_{a,b}(y),
\label{eq:M11-def}
\\
C_{a,b}^{2}(y)
&:=
\overline M_{a,b}''(y)\int_0^y \underline{\mathrm{P}}^1_{a,b}(x)\,dx
+
\underline M_{a,b}''(y)\int_y^\infty \overline{\mathrm{P}}^1_{a,b}(x)\,dx.
\label{eq:M22-def}
\end{align}
By scaling,
\begin{equation}
\label{eq:M-scaling}
  C_{a,b}^{1}(y)
  =
  \frac1b C_{a,1}^{1}\left(\frac{y}{b}\right),
  \qquad
  C_{a,b}^{2}(y)
  =
  \frac1b C_{a,1}^{2}\left(\frac{y}{b}\right).
\end{equation}

The basic kernel estimates are as follows.

\begin{lem}
\label{lma:boundskern}
Let \(X\sim\mathcal G_{a,b}\). Then, for every \(y>0\),
\[
  \mathbb E[|K_{a,b}^{0,1}(X,y)|]=1,
  \qquad
  \mathbb E[|K_{a,b}^{2,1}(X,y)|]
  =
  \mathbb E[|K_{a,b}^{3,2}(X,y)|]
  =
  \frac1{by},
\]
and
\[
  \mathbb E[|K_{a,b}^{1,1}(X,y)|]=C_{a,b}^{1}(y),
  \qquad
  \mathbb E[|K_{a,b}^{2,2}(X,y)|]=C_{a,b}^{2}(y).
\]
\end{lem}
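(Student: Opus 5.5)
Each identity follows from the same pattern. Every kernel is a sign-definite combination of two terms, one supported on \(\{x\le y\}\) and one on \(\{x>y\}\). Once the signs are fixed, the absolute value is removed term by term and the expectation is a pair of explicit integrals. Against \(p_{a,b}(x)\,dx\), the ratio \(P_{a,b}(x)/p_{a,b}(x)\) (resp.\ \(\overline P_{a,b}(x)/p_{a,b}(x)\), \(\underline{\mathrm P}^1_{a,b}/p_{a,b}\), \(\overline{\mathrm P}^1_{a,b}/p_{a,b}\)) integrates to the integrals of \(P_{a,b}\), \(\overline P_{a,b}\), \(\underline{\mathrm P}^1_{a,b}\), \(\overline{\mathrm P}^1_{a,b}\) over \((0,y)\) or \((y,\infty)\).

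\textbf{Signs.} The weights \(P_{a,b},\overline P_{a,b},\underline{\mathrm P}^1_{a,b},\overline{\mathrm P}^1_{a,b}\) are nonnegative. By Lemma~\ref{lem:gamma-mills-ratios}, \(\overline M_{a,b}\ge0\), \(\overline M'_{a,b}\le0\), \(\overline M''_{a,b}\ge0\), \(\overline M'''_{a,b}\le0\), and all derivatives of \(\underline M_{a,b}\) up to order three are nonnegative. Hence:
\begin{itemize}
\item \(K^{0,1}_{a,b}\le 0\) and \(K^{2,1}_{a,b}\le0\).
\item \(K^{1,1}_{a,b}\) is \(\ge0\) on \(\{x\le y\}\) and \(\le0\) on \(\{x>y\}\).
\item \(K^{2,2}_{a,b}\) is \(\ge0\) on \(\{x\le y\}\) and \(\le0\) on \(\{x>y\}\).
\item \(K^{3,2}_{a,b}\le0\).
\end{itemize}
For \(K^{1,1}_{a,b}\) and \(K^{2,2}_{a,b}\), the absolute value just flips the sign of one piece.

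\textbf{Computing the expectations.}
\begin{itemize}
\item For \(K^{0,1}_{a,b}\):
\[
\E|K^{0,1}_{a,b}(X,y)| = \overline M_{a,b}(y)\int_0^yP_{a,b}+\underline M_{a,b}(y)\int_y^\infty\overline P_{a,b}.
\]
Integration by parts gives \(\int_0^y P_{a,b}=yP_{a,b}(y)-\int_0^y u\,p_{a,b}(u)\,du\) and \(\int_y^\infty \overline P_{a,b}=\int_y^\infty u\,p_{a,b}(u)\,du-y\overline P_{a,b}(y)\). Using \(up_{a,b}(u)=ab\,p_{a,b}(u)+b\,(\ldots)\), i.e.\ Theorem~\ref{thm:gamma-ibp} in the form \(\frac{d}{du}(bup_{a,b})=(ab-u)p_{a,b}\), we get \(\int_0^y(u-ab)p_{a,b}=-byp_{a,b}(y)\). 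Multiplying by \(\overline P_{a,b}(y)P_{a,b}(y)/(byp_{a,b}(y))\), the \(ab\)-terms and \(y\)-terms cancel and the total is \(1\).
\item For \(K^{2,1}_{a,b}\) the same expression appears, with \(\overline M_{a,b},\underline M_{a,b}\) replaced by \(\overline M''_{a,b},\underline M''_{a,b}\). Equivalently, we need
\[
\overline M''_{a,b}(y)\,I_1(y)+\underline M''_{a,b}(y)\,I_2(y)=\frac{1}{by},
\]
where \(I_1(y)=\int_0^yP_{a,b}\) and \(I_2(y)=\int_y^\infty \overline P_{a,b}\).
\item For \(K^{3,2}_{a,b}\), the analogous quantity is \(-\overline M'''_{a,b}\int_0^y\underline{\mathrm P}^1_{a,b}+\underline M'''_{a,b}\int_y^\infty\overline{\mathrm P}^1_{a,b}\).
\item For \(K^{1,1}_{a,b}\) and \(K^{2,2}_{a,b}\), the resulting expressions are literally \eqref{eq:M11-def} and \eqref{eq:M22-def}, provided the flipped-sign piece is tracked. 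For \(K^{1,1}_{a,b}\) the weights \(P_{a,b},\overline P_{a,b}\) integrate to \(\underline{\mathrm P}^1_{a,b}(y)\) and \(\overline{\mathrm P}^1_{a,b}(y)\). Note that \(\int_y^\infty\overline P_{a,b}=\overline{\mathrm P}^1_{a,b}(y)\), so this is immediate.
\end{itemize}

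\textbf{Identities for \(K^{2,1}_{a,b}\) and \(K^{3,2}_{a,b}\).} I would establish these via a Wronskian-type argument. Set \(\Phi(y)=\overline M_{a,b}(y)I_1(y)+\underline M_{a,b}(y)I_2(y)\), which equals \(1\) by the first step. Then \(I_1'=P_{a,b}\), \(I_2'=-\overline P_{a,b}\), and
\[
\overline M_{a,b}P_{a,b}-\underline M_{a,b}\overline P_{a,b}=0
\]
by the definitions. Differentiating \(\Phi\equiv1\) once therefore gives \(\overline M'_{a,b}I_1+\underline M'_{a,b}I_2=0\). Differentiating again gives
\[
\overline M''_{a,b}I_1+\underline M''_{a,b}I_2=-(\overline M'_{a,b}P_{a,b}-\underline M'_{a,b}\overline P_{a,b}).
\]
Differentiating the product identity \(\overline M_{a,b}P_{a,b}=\underline M_{a,b}\overline P_{a,b}\) turns this into
\[
\underline M_{a,b}(-p_{a,b})-\overline M_{a,b}p_{a,b}=-\frac{P_{a,b}+\overline P_{a,b}}{by}=-\frac1{by}.
\]
So the \(K^{2,1}_{a,b}\) identity holds. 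For \(K^{3,2}_{a,b}\), I would run the same scheme one level higher, with \(J_1=\int_0^y\underline{\mathrm P}^1_{a,b}\) and \(J_2=\int_y^\infty\overline{\mathrm P}^1_{a,b}\). First show that \(\overline M_{a,b}J_1-\underline M_{a,b}J_2\) and its first derivative combinations reduce to lower-order identities. Then differentiate three times, reusing \(\overline M'_{a,b}I_1+\underline M'_{a,b}I_2=0\) and the \(1/(by)\) identity.

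\textbf{Main obstacle.} This last bookkeeping is where I expect the difficulty: the signs of the \(J\)-combination at order zero and the boundary behaviour as \(y\to0,\infty\) must be handled to fix integration constants. I would determine these constants from the asymptotics of the Mills ratios, using the integral representation from Appendix~\ref{appendix:steinstuff}.
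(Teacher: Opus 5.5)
Your sign analysis and your evaluations for $K^{0,1}_{a,b}$, $K^{1,1}_{a,b}$ and $K^{2,2}_{a,b}$ are correct. Your proof of the $K^{2,1}_{a,b}$ identity is also valid: you differentiate $\overline M_{a,b}I_1+\underline M_{a,b}I_2\equiv1$ twice and use the derivative of $\overline M_{a,b}P_{a,b}=\underline M_{a,b}\overline P_{a,b}$. This differs from the paper, which uses explicit formulas for $\underline M''_{a,b},\overline M''_{a,b}$.

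The gap is the $K^{3,2}_{a,b}$ identity. You only sketch it, and the sketch does not close. Put $\Theta_k:=\overline M^{(k)}_{a,b}J_1-\underline M^{(k)}_{a,b}J_2$. Since $J_1'=I_1$ and $J_2'=-I_2$, your lower-order identities give
\[
  \Theta_0'=\Theta_1+1,\qquad \Theta_1'=\Theta_2,\qquad \Theta_2'=\Theta_3+\frac{1}{by}.
\]
The claim is $\E[|K^{3,2}_{a,b}(X,y)|]=-\Theta_3=1/(by)$, which is equivalent to $\Theta_0'''\equiv0$. These relations only express $\Theta_1,\Theta_2,\Theta_3$ in terms of $\Theta_0$. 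So neither further differentiation nor a choice of integration constants from boundary asymptotics can force $\Theta_0'''\equiv0$. You need an independent evaluation of some $\Theta_k$, and $\Theta_0$ is not a constant: in fact $\Theta_0(y)=(y-ab-b)/2$.

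At the first level your scheme closed because it had two independent inputs: the computed value $\Phi\equiv1$ and the tautology $\overline M_{a,b}P_{a,b}=\underline M_{a,b}\overline P_{a,b}$. At the second level the analogue of that tautology is $\Theta_2\equiv0$, i.e.\ $\overline M''_{a,b}J_1=\underline M''_{a,b}J_2$. This is no longer automatic and must be proved.

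The missing ingredient is the closed form of the second derivatives of the Mills ratios in terms of the second iterated tails:
\[
  \underline M''_{a,b}(y)=\frac{2J_1(y)}{b^3y^3p_{a,b}(y)},
  \qquad
  \overline M''_{a,b}(y)=\frac{2J_2(y)}{b^3y^3p_{a,b}(y)}.
\]
The paper obtains these by differentiating $\underline M'_{a,b}=\underline{\mathrm P}^1_{a,b}/(b^2y^2p_{a,b})$ and $\overline M'_{a,b}=-\overline{\mathrm P}^1_{a,b}/(b^2y^2p_{a,b})$. It then uses a second integration by parts: $2J_1=(y-ab-b)\underline{\mathrm P}^1_{a,b}+byP_{a,b}$ and $2J_2=(ab+b-y)\overline{\mathrm P}^1_{a,b}+by\overline P_{a,b}$.

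You can also read these formulas off the appendix integral representation. For $b=1$ and $X\sim\mathcal G_{a,1}$, the substitution $u=yt$ gives $\underline M^{(n)}_{a,1}(y)=\E[(y-X)_+^n]/(y^{n+1}p_{a,1}(y))$ and $\overline M^{(n)}_{a,1}(y)=(-1)^n\E[(X-y)_+^n]/(y^{n+1}p_{a,1}(y))$. Also $J_1=\frac12\E[(y-X)_+^2]$ and $J_2=\frac12\E[(X-y)_+^2]$. So the integral representation should be used to identify $\underline M''_{a,b}$ and $\overline M''_{a,b}$, not to fix integration constants.

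Once $\Theta_2\equiv0$ is in hand, one differentiation gives $\Theta_3=-(\overline M''_{a,b}I_1+\underline M''_{a,b}I_2)=-1/(by)$ by your own $K^{2,1}_{a,b}$ identity. That is exactly the claim.
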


We now connect the kernels with solutions of the Gamma--Stein equation
\eqref{eq:stein-equation-pos}. Let \(X\sim\mathcal G_{a,b}\). For \(x>0\),
define
\begin{align}
f_{h;a,b}(x)
&=
\mathbb E[h'(X)K_{a,b}^{0,1}(X,x)],
\label{eq:fh-explicit}
\\
g_{h;a,b}(x)
&=
\mathbb E[h'(X)K_{a,b}^{1,1}(X,x)],
\label{eq:gh-explicit}
\\
\ell_{h;a,b}(x)
&=
\mathbb E[h'(X)K_{a,b}^{2,1}(X,x)]
+\frac{h'(x)}{bx},
\label{eq:ellh-explicit}
\\
\widetilde\ell_{h;a,b}(x)
&=
\mathbb E[h''(X)K_{a,b}^{2,2}(X,x)],
\label{eq:tildelh-explicit}
\\
m_{h;a,b}(x)
&=
\mathbb E[h''(X)K_{a,b}^{3,2}(X,x)]
+\frac{h''(x)}{bx}.
\label{eq:mh-explicit}
\end{align}
These representations require the corresponding regularity of \(h\):
\eqref{eq:fh-explicit} and \eqref{eq:gh-explicit} require \(h\) to be a.e.
differentiable; \eqref{eq:ellh-explicit} requires differentiability on
\((0,\infty)\); \eqref{eq:tildelh-explicit} requires \(h\) to be twice a.e.
differentiable; and \eqref{eq:mh-explicit} requires twice differentiability
on \((0,\infty)\).

\begin{lem}
\label{lem:solution}
Fix \(a>0\) and \(b>0\). If \(h\in\mathcal Z_1\),  the function
\(f_{h;a,b}\) defined in \eqref{eq:fh-explicit} solves
\eqref{eq:stein-equation-pos}. Moreover, \(f_{h;a,b}\) and \(g_{h;a,b}\) are
differentiable on \((0,\infty)\), with
\(f_{h;a,b}'=g_{h;a,b}\) and \(g_{h;a,b}'=\ell_{h;a,b}\). The functions
\(f_{h;a,b}\), \(g_{h;a,b}\), \(xg_{h;a,b}\), and \(x\ell_{h;a,b}\) are
bounded on \((0,\infty)\), hence integrable with respect to
\(\mathcal G_{u,v}\) for every \(u,v>0\).

If \(h\in\mathcal Z_2\), then, in addition, 
\(\ell_{h;a,b}=\widetilde\ell_{h;a,b}\) on \((0,\infty)\). Moreover,
\(\ell_{h;a,b}\) is differentiable on \((0,\infty)\), with
\(\ell_{h;a,b}'=m_{h;a,b}\). The functions \(\ell_{h;a,b}\) and
\(xm_{h;a,b}\) are bounded on \((0,\infty)\), hence integrable with respect
to \(\mathcal G_{u,v}\) for every \(u,v>0\).
\end{lem}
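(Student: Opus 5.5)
Starting from the classical explicit solution of \eqref{eq:stein-equation-pos}, I will show it coincides with the kernel formula \eqref{eq:fh-explicit}, and then differentiate. Writing the equation as \(b\,(x^{a}e^{-x/b}f(x))'\,x^{1-a}e^{x/b}=h(x)-\mathcal G_{a,b}h\), i.e. \((bxp_{a,b}(x)f(x))'=(h(x)-\mathcal G_{a,b}h)p_{a,b}(x)\), the bounded solution is
\[
  f(x)=\frac{1}{bxp_{a,b}(x)}\int_0^x (h(u)-\mathcal G_{a,b}h)p_{a,b}(u)\,du
  =-\frac{1}{bxp_{a,b}(x)}\int_x^\infty (h(u)-\mathcal G_{a,b}h)p_{a,b}(u)\,du .
\]
Writing \(h(u)-\mathcal G_{a,b}h=\int_0^\infty h'(t)(\mathbf 1_{\{t<u\}}-\overline P_{a,b}(t))\,dt\), which is valid since \(|h'|\le1\) and the Gamma law has a finite mean, and applying Fubini, I get
\[
\int_0^x(h-\mathcal G_{a,b}h)p_{a,b}=-\overline P_{a,b}(x)\int_0^x h'(t)P_{a,b}(t)\,dt-P_{a,b}(x)\int_x^\infty h'(t)\overline P_{a,b}(t)\,dt .
\]
Dividing by \(bxp_{a,b}(x)\) and writing \(h'(t)\,dt=h'(t)p_{a,b}(t)\,dt/p_{a,b}(t)\) gives exactly \(\mathbb E[h'(X)K^{0,1}_{a,b}(X,x)]\). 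Lemma~\ref{lma:boundskern} then yields \(|f_{h;a,b}|\le1\). Local absolute continuity of \(f\) and the equation itself follow from the quotient form.

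For the derivatives, I write
\[
f_{h;a,b}(x)=-\overline M_{a,b}(x)\,I_1(x)-\underline M_{a,b}(x)\,I_2(x),
\qquad I_1(x)=\int_0^x h'P_{a,b},\quad I_2(x)=\int_x^\infty h'\overline P_{a,b}.
\]
Differentiating with Lemma~\ref{lem:gamma-mills-ratios}, which supplies smoothness of the Mills ratios, the boundary terms are \(-\overline M h'(x)P(x)+\underline M h'(x)\overline P(x)\), and these cancel because \(\overline M(x)P(x)=\underline M(x)\overline P(x)\). What remains is \(g_{h;a,b}\). Differentiating once more, the boundary terms become \(h'(x)[-\overline M'(x)P(x)+\underline M'(x)\overline P(x)]\). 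A short computation, using \(\underline M+\overline M=1/(bxp)\) and the Mills-ratio ODE \(\underline M'=1/(bx)-\underline M\,(bxp)'/(bxp)\), shows that this bracket equals \(1/(bx)\), which gives \(\ell_{h;a,b}\). Boundedness of \(g\) follows from \(|g|\le C^1_{a,b}(x)\) by Lemma~\ref{lma:boundskern}. Boundedness of \(xg\) and \(x\ell\) requires \(\sup_y yC^1_{a,b}(y)<\infty\) and \(\sup_y y/(by)+1/b<\infty\); the latter is trivial, and the former follows from the asymptotics of the Mills ratios at \(0\) and \(\infty\) via the appendix representation.

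For \(h\in\mathcal Z_2\), I integrate by parts inside \(I_1\) and \(I_2\) in the expression for \(\ell\):
\[
\int_0^x h'P=h'(x)\underline{\mathrm P}^1(x)-\int_0^x h''\underline{\mathrm P}^1,
\qquad
\int_x^\infty h'\overline P=h'(x)\overline{\mathrm P}^1(x)+\int_x^\infty h''\overline{\mathrm P}^1 .
\]
The \(h'(x)\) terms must then cancel against \(h'(x)/(bx)\), which requires the identity \(-\overline M''\underline{\mathrm P}^1+\underline M''\overline{\mathrm P}^1=-1/(bx)\), obtained by differentiating the previous identity and using \(\underline{\mathrm P}^{1\prime}=P\) and \(\overline{\mathrm P}^{1\prime}=-\overline P\). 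This leaves \(\widetilde\ell_{h;a,b}\). I then differentiate again as before to obtain \(m_{h;a,b}\), where the boundary bracket is again \(1/(bx)\) by the same identity. The bounds follow from Lemma~\ref{lma:boundskern}, since \(|xm|\le 2/b\) and \(|\ell|\le C^2_{a,b}(x)\). The main obstacle is proving \(\sup_y C^2_{a,b}(y)<\infty\) and \(\sup_y yC^1_{a,b}(y)<\infty\), together with justifying differentiation under the integral. I would handle this through the asymptotic behaviour \(\underline M\sim 1/a\) near \(0\) and \(\overline M\sim 1/y\) at \(\infty\) for the Mills ratios and their derivatives, combined with the tail behaviour of the \(\mathrm P^1\) integrals.
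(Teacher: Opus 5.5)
Your architecture matches the paper's. You write \(f_{h;a,b}=-\overline M_{a,b}I_1-\underline M_{a,b}I_2\), differentiate, cancel the diagonal terms, integrate by parts once more for \(\mathcal Z_2\), and bound with Lemma~\ref{lma:boundskern}. Two steps are done differently, and both are sound.

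First, the paper starts from \eqref{eq:fh-explicit} for smooth compactly supported \(h\). It integrates \(\underline\Phi_h,\overline\Phi_h\) by parts to reach the quotient form \((bxp_{a,b})^{-1}\int_0^x(h-\mathcal G_{a,b}h)p_{a,b}\), and then extends to \(\mathcal Z_1,\mathcal Z_2\) by a mollification and truncation argument that is only sketched. You go the other way, from the quotient form to the kernel, via Fubini and \(h(u)-\mathcal G_{a,b}h=\int_0^\infty h'(t)(\mathbf 1_{\{t<u\}}-\overline P_{a,b}(t))\,dt\). That computation is correct and justified for every \(h\in\mathcal Z_1\), since the Gamma law has a finite mean. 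So your route removes the approximation step entirely.

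Second, you obtain \(-\overline M_{a,b}'P_{a,b}+\underline M_{a,b}'\overline P_{a,b}=1/(bx)\) from the Mills-ratio ODE and \(\overline M_{a,b}P_{a,b}=\underline M_{a,b}\overline P_{a,b}\). The paper instead uses \eqref{eq:Mprime-identities} and \eqref{eq:P1-identities}; your version is correct and a bit quicker.

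The finiteness of \(\sup C^1_{a,1}\), \(\sup zC^1_{a,1}(z)\) and \(\sup C^2_{a,1}\), which you flag as the main obstacle, is exactly Proposition~\ref{prop:Ca-bounds}. It is proved there by the endpoint asymptotics you describe. Note that you also need \(\sup C^1_{a,1}<\infty\) for \(g_{h;a,b}\) itself. No differentiation under the integral sign is needed, since \(I_1,I_2\) are variable-limit integrals of continuous functions.

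There is, however, a sign error in the identity on which the whole \(\mathcal Z_2\) part rests. After your two integrations by parts, the boundary terms vanish: \(h'\) is Lipschitz, \(\underline{\mathrm P}^1_{a,b}(0^+)=0\), and \(\overline{\mathrm P}^1_{a,b}\) decays exponentially. The coefficient of \(h'(x)\) in \(\ell_{h;a,b}(x)\) is then
\[
  \frac1{bx}-\overline M_{a,b}''(x)\underline{\mathrm P}^1_{a,b}(x)-\underline M_{a,b}''(x)\overline{\mathrm P}^1_{a,b}(x).
\]
So you need \(\overline M_{a,b}''\underline{\mathrm P}^1_{a,b}+\underline M_{a,b}''\overline{\mathrm P}^1_{a,b}=1/(bx)\), with two plus signs. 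The same combination is the diagonal bracket when you differentiate \(\widetilde\ell_{h;a,b}\).

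The identity you wrote, \(-\overline M''\underline{\mathrm P}^1+\underline M''\overline{\mathrm P}^1=-1/(bx)\), is false. Together with the correct one it would force \(\underline M_{a,b}''\overline{\mathrm P}^1_{a,b}\equiv0\), yet both factors are strictly positive. Also, differentiating the first-order identity produces no \(\underline{\mathrm P}^1,\overline{\mathrm P}^1\) terms at all, so that justification does not work.

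A correct derivation within your framework goes as follows. Your ODE, combined with \(\underline{\mathrm P}^1_{a,b}=(x-ab)P_{a,b}+bxp_{a,b}\) and \(\overline{\mathrm P}^1_{a,b}=(ab-x)\overline P_{a,b}+bxp_{a,b}\), gives \(\underline M_{a,b}'=\underline{\mathrm P}^1_{a,b}/(b^2x^2p_{a,b})\) and \(\overline M_{a,b}'=-\overline{\mathrm P}^1_{a,b}/(b^2x^2p_{a,b})\). Hence \(\overline M_{a,b}'\underline{\mathrm P}^1_{a,b}+\underline M_{a,b}'\overline{\mathrm P}^1_{a,b}\equiv0\). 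Differentiating this, with \((\underline{\mathrm P}^1_{a,b})'=P_{a,b}\) and \((\overline{\mathrm P}^1_{a,b})'=-\overline P_{a,b}\), and then using your first-order identity, yields
\[
  \overline M_{a,b}''\underline{\mathrm P}^1_{a,b}+\underline M_{a,b}''\overline{\mathrm P}^1_{a,b}
  =-\overline M_{a,b}'P_{a,b}+\underline M_{a,b}'\overline P_{a,b}=\frac1{bx}.
\]
With this correction, \(\ell_{h;a,b}=\widetilde\ell_{h;a,b}\) and \(\ell_{h;a,b}'=m_{h;a,b}\) follow exactly as you describe.
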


Lemma~\ref{lem:solution} gives pointwise representatives of the Stein
solution and of its derivatives. From now on, for convenience and under the
regularity assumptions on \(h\) required by Lemma~\ref{lem:solution}, we use
the notation
\[
  f_h:=f_{h;a,b},
  \qquad
  f_h':=g_{h;a,b},
  \qquad
  f_h'':=\ell_{h;a,b},
  \qquad
  f_h''':=m_{h;a,b},
\]
where the derivatives are understood in the pointwise representative sense
given by Lemma~\ref{lem:solution}. In particular, these representatives satisfy
the weighted boundedness properties needed to apply the Gamma
integration-by-parts identity. The following  bounds then follow directly from
Lemma~\ref{lma:boundskern} after taking absolute values.

\begin{lem}[Non-uniform Stein factors]
\label{lem:stein-factors-nonuniform}
Let \(a>0\) and \(b>0\), and put \(z=x/b\). If \(h\in\mathcal Z_1\), then,
for every \(x>0\),
\[
  |f_h(x)|\le1,
  \qquad
  |f_h'(x)|\le\frac{1}{b}C_{a,1}^{1}(z),
  \qquad
  |x f_h'(x)|\le zC_{a,1}^{1}(z),
  \qquad
  |x f_h''(x)|\le\frac2b.
\]
If \(h\in\mathcal Z_2\), then, for every \(x>0\),
\[
  |f_h''(x)|\le\frac{1}{b}C_{a,1}^{2}(z),
  \qquad
  |x f_h'''(x)|\le\frac2b.
\]
\end{lem}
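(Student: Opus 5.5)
The proof combines the pointwise representations of Lemma~\ref{lem:solution} with the kernel estimates of Lemma~\ref{lma:boundskern}. In each case we bound \(|h'|\) or \(|h''|\) by \(1\) inside the expectation.

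\textbf{Case \(h\in\mathcal Z_1\).} We have \(\|h'\|_\infty\le1\). By \eqref{eq:fh-explicit},
\[
|f_h(x)|\le\E[|K^{0,1}_{a,b}(X,x)|]=1 .
\]
By \eqref{eq:gh-explicit},
\[
|f_h'(x)|\le\E[|K^{1,1}_{a,b}(X,x)|]=C^1_{a,b}(x).
\]
The scaling relation \eqref{eq:M-scaling} turns this into \(|f_h'(x)|\le C^1_{a,1}(z)/b\) with \(z=x/b\). Multiplying by \(x=bz\) gives \(|xf_h'(x)|\le zC^1_{a,1}(z)\).

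For the second derivative we use \eqref{eq:ellh-explicit}:
\[
|xf_h''(x)|\le x\,\E[|K^{2,1}_{a,b}(X,x)|]+\frac{|h'(x)|}{b}\le x\cdot\frac1{bx}+\frac1b=\frac2b .
\]

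\textbf{Case \(h\in\mathcal Z_2\).} Now \(\|h''\|_\infty\le1\). Lemma~\ref{lem:solution} gives \(f_h''=\ell_{h;a,b}=\widetilde\ell_{h;a,b}\). Using \eqref{eq:tildelh-explicit},
\[
|f_h''(x)|\le\E[|K^{2,2}_{a,b}(X,x)|]=C^2_{a,b}(x)=\frac1b C^2_{a,1}(z),
\]
where the last step is again \eqref{eq:M-scaling}. Finally, \eqref{eq:mh-explicit} together with \(\E[|K^{3,2}_{a,b}(X,x)|]=1/(bx)\) gives
\[
|xf_h'''(x)|\le \frac1b+\frac1b=\frac2b .
\]

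\textbf{Main point of care.} The only step needing attention is the use of the \emph{pointwise} representatives. The bound on \(f_h''\) in the \(\mathcal Z_2\) case must use the \(\widetilde\ell\) representation, which requires the identity \(\ell=\widetilde\ell\) from Lemma~\ref{lem:solution}. Otherwise everything is a direct application of the triangle inequality inside the expectation.
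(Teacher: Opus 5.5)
Your proposal is correct and is essentially the paper's own argument. The paper likewise takes absolute values inside the kernel representations of Lemma~\ref{lem:solution}, using \(\ell_{h;a,b}=\widetilde\ell_{h;a,b}\) for the \(\mathcal Z_2\) bound on \(f_h''\); it then applies the \(L^1\) kernel identities of Lemma~\ref{lma:boundskern} and rescales via \eqref{eq:M-scaling}, exactly as you do.
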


\begin{rem}[Sharpness of the non-uniform Stein factors]
\label{rem:sharpness-nonuniform-stein-factors}
The non-uniform bounds in Lemma~\ref{lem:stein-factors-nonuniform} cannot be
improved as bounds uniform over \(h\in\mathcal Z_1\) or \(h\in\mathcal Z_2\).
Indeed, each estimate is obtained from a kernel representation by taking the
absolute value inside the expectation. This loss is unavoidable.

More precisely, the sign properties of the Gamma--Mills' ratios from
Lemma~\ref{lem:gamma-mills-ratios} determine the signs of the relevant
kernels: \(K_{a,b}^{0,1}\), \(K_{a,b}^{2,1}\), and \(K_{a,b}^{3,2}\) have a
fixed sign, whereas \(K_{a,b}^{1,1}\) and \(K_{a,b}^{2,2}\) change sign only
at the evaluation point. Therefore the usual \(L^\infty\)--\(L^1\) duality
identity
\[
  \sup_{\|\varphi\|_\infty\le1}
  \left|
    \mathbb E[\varphi(X)K(X,x)]
  \right|
  =
  \mathbb E[|K(X,x)|]
\]
shows that the envelopes appearing in Lemma~\ref{lem:stein-factors-nonuniform}
are the best possible ones. The supremum is attained over bounded measurable functions by taking
\(\varphi\) to be the sign of the relevant kernel; within the original smooth
test classes, the same conclusion follows by smooth approximation.
\end{rem}

The pointwise optimal bounds
\(C_{a,1}^{1}(z)\), \(zC_{a,1}^{1}(z)\), and \(C_{a,1}^{2}(z)\)
are given explicitly in terms of the Gamma density and distribution function.
Their analytic properties can therefore be studied directly. In
Proposition~\ref{prop:Ca-bounds}, we obtain their behaviour at zero and at
infinity, and show in particular that they are bounded on \((0,\infty)\).
Thus
\[
  c_a:=\sup_{z>0}C_{a,1}^{1}(z),
  \qquad
  d_a:=\sup_{z>0}zC_{a,1}^{1}(z),
  \qquad
  e_a:=\sup_{z>0}C_{a,1}^{2}(z)
\]
are finite. Taking the supremum over the evaluation point \(z\) in
Lemma~\ref{lem:stein-factors-nonuniform} yields the following bounds, now
uniform in \(z\), for the corresponding Stein factors. 

\begin{lem}[Stein factors]
\label{lem:stein-factors}
Let \(a>0\) and \(b>0\). If \(h\in\mathcal Z_1\), then
\[
  \|f_h\|_\infty\le 1,
  \qquad
  \|f_h'\|_\infty\le\frac{c_a}{b},
  \qquad
  \|xf_h'\|_\infty\le d_a,
  \qquad
  \|xf_h''\|_\infty\le\frac2b.
\]
If \(h\in\mathcal Z_2\), then
\[
  \|f_h''\|_\infty\le\frac{e_a}{b},
  \qquad
  \|xf_h'''\|_\infty\le\frac2b.
\]
\end{lem}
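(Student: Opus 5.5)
The Stein-factor bounds follow by taking the supremum over the evaluation point in Lemma~\ref{lem:stein-factors-nonuniform}, with $z=x/b$. Since $x\mapsto x/b$ is a bijection of $(0,\infty)$ onto itself, a supremum over $x>0$ of a function of $z$ equals the supremum over $z>0$. The only point needing comment is the finiteness of $c_a,d_a,e_a$, which we take from Proposition~\ref{prop:Ca-bounds} as announced before the statement.

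The steps are as follows.

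\textbf{First-order bounds ($h\in\mathcal Z_1$).} Lemma~\ref{lem:stein-factors-nonuniform} gives the following.
\begin{itemize}
\item $|f_h(x)|\le1$ for all $x>0$. Taking the supremum gives $\|f_h\|_\infty\le1$.
\item $|f_h'(x)|\le b^{-1}C_{a,1}^1(x/b)\le b^{-1}\sup_{z>0}C_{a,1}^1(z)=c_a/b$.
\item $|xf_h'(x)|\le zC_{a,1}^1(z)\le d_a$.
\item $|xf_h''(x)|\le 2/b$ for every $x$, so the bound is already uniform.
\end{itemize}

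\textbf{Second-order bounds ($h\in\mathcal Z_2$).} Lemma~\ref{lem:stein-factors-nonuniform} similarly gives:
\begin{itemize}
\item $|f_h''(x)|\le b^{-1}C_{a,1}^2(z)\le e_a/b$.
\item $|xf_h'''(x)|\le 2/b$.
\end{itemize}

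Throughout, $f_h',f_h'',f_h'''$ denote the pointwise representatives $g_{h;a,b},\ell_{h;a,b},m_{h;a,b}$ of Lemma~\ref{lem:solution}. The sup-norms are therefore genuine suprema over $(0,\infty)$, not essential suprema.

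\textbf{Finiteness of the constants.} This is the only substantive issue. We rely on Proposition~\ref{prop:Ca-bounds}, which gives the behaviour of $C^1_{a,1}$, $zC^1_{a,1}$ and $C^2_{a,1}$ near $0$ and at infinity. Together with continuity of these functions on $(0,\infty)$, this shows they are bounded.

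Continuity holds for two reasons:
\begin{itemize}
\item The Mills ratios are $C^3$ by Lemma~\ref{lem:gamma-mills-ratios}.
\item The integrated tails $\underline{\mathrm P}^1_{a,1}$, $\overline{\mathrm P}^1_{a,1}$ and their integrals are continuous.
\end{itemize}
Hence $C^1_{a,1}$ and $C^2_{a,1}$, as given by \eqref{eq:M11-def} and \eqref{eq:M22-def}, are continuous, and boundedness on compact subintervals is automatic. Only the endpoint limits require the proposition.

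If one wanted to avoid relying on it, one would expand $\overline M_{a,1}$ and $\underline M_{a,1}$ as $y\to0$ and $y\to\infty$. For instance, $\overline M_{a,1}(y)\sim 1/y$ at infinity and $\underline M_{a,1}(y)\to 1/a$ at $0$. One would then check that the products in $C^1_{a,1}$ and $C^2_{a,1}$ stay bounded, and that $zC^1_{a,1}(z)$ does not grow at infinity. That endpoint analysis is the main obstacle, but it is deferred to the appendix.
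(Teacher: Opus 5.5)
Your proposal is correct and is essentially the paper's own argument: the paper obtains Lemma~\ref{lem:stein-factors} by taking the supremum over the evaluation point \(z=x/b\) in Lemma~\ref{lem:stein-factors-nonuniform}. The finiteness of \(c_a,d_a,e_a\) comes from Proposition~\ref{prop:Ca-bounds}, via continuity on \((0,\infty)\) and finite limits at \(0\) and \(\infty\), exactly as you describe (your side remarks \(\overline M_{a,1}(y)\sim 1/y\) at infinity and \(\underline M_{a,1}(y)\to 1/a\) at \(0\) are accurate but not needed).
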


Finally, Proposition~\ref{prop:Ca-bounds} gives, as \(a\to\infty\),
\begin{equation}
  \sqrt a\,c_a\to\sqrt{\frac{2}{\pi}},
  \qquad
  \frac{d_a}{\sqrt a}\to\sqrt{\frac{2}{\pi}},
  \qquad
  \sqrt a\,e_a\to\sqrt{\frac{\pi}{8}}.
\label{eq:asumoacde}
\end{equation}
These asymptotics, illustrated in
Figure~\ref{fig:gamma-stein-asymptotics}, make explicit the dependence of the
Stein factors on the shape parameter in the large-\(a\) regime, and will be
used below to compare our sharp estimates with existing bounds in the
literature.

\begin{rem}
\label{rem:explicit-bound-ea}
For completeness, we also mention that one can derive simple elementary
upper bounds for the constants appearing above. For instance, as shown in
Appendix~\ref{appendix:steinstuff}, for every \(a\ge16\),
\[
  e_a\le \frac{5}{2\sqrt a}.
\]
The constant \(5/2\) is not sharp and is chosen only for convenience. This
bound is not needed in its explicit form below, where we keep the notation
\(e_a\), but it confirms that \(e_a\) has the correct order \(a^{-1/2}\), in
agreement with \eqref{eq:asumoacde}.
\end{rem}
\begin{figure}[t]
\centering
\begin{subfigure}{0.32\textwidth}
  \centering
  \includegraphics[width=\textwidth]{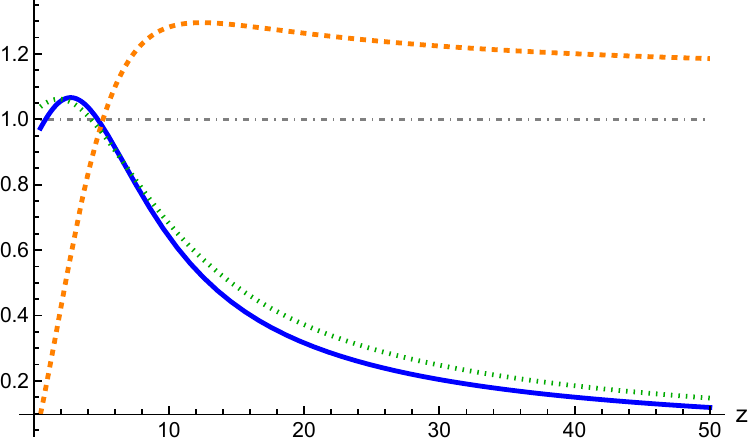}
  \caption{\(a=5\)}
\end{subfigure}\hfill
\begin{subfigure}{0.32\textwidth}
  \centering
  \includegraphics[width=\textwidth]{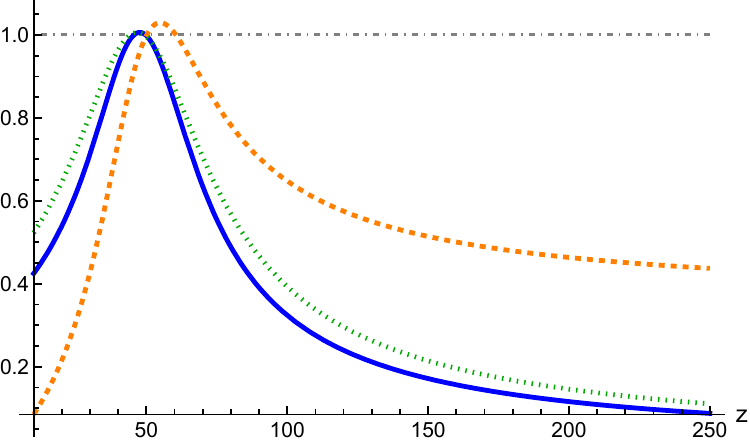}
  \caption{\(a=50\)}
\end{subfigure}\hfill
\begin{subfigure}{0.32\textwidth}
  \centering
  \includegraphics[width=\textwidth]{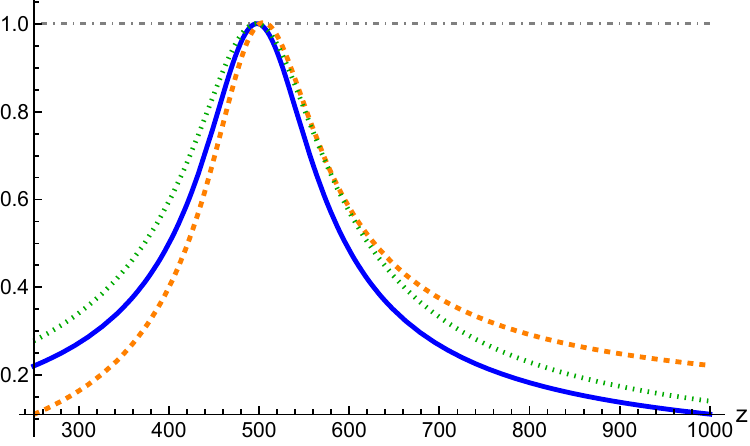}
  \caption{\(a=500\)}
\end{subfigure}
\caption{\emph{Normalized non-uniform Stein-factor envelopes for increasing values
of the Gamma shape parameter. From left to right, the plots correspond to
\(a=5\), \(a=50\), and \(a=500\). In each panel, the solid blue curve is
\(C_{a,1}^{1}(z)/(\sqrt{2/(\pi a)})\), the dashed orange curve is
\(zC_{a,1}^{1}(z)/(\sqrt{2a/\pi})\), and the dotted green curve is
\(C_{a,1}^{2}(z)/\sqrt{\pi/(8a)}\). The grey dot-dashed horizontal line marks
the reference level \(1\). The stabilization of the curves illustrates the
large-shape asymptotics in \eqref{eq:asumoacde}.}}
\label{fig:gamma-stein-asymptotics}
\end{figure}
\begin{rem}
\label{rem:comparison-gpr}
The closest comparable bounds we are aware of are those of
\cite{Gaunt2017,GauntThesis}. Translated into our notation, they imply that,
for every \(m\ge1\),
\[
  \|f_{h;a,b}^{(m)}\|_\infty
  \le
  \frac1b
  \left\{
  \sqrt{\frac{2\pi}{a+m}}
  +
  \frac{\log(a+m+1)}{e(a+m)}
  +
  \frac{2}{a+m}
  +
  \frac{1}{e(a+m+1)}
  \right\}
  \|h^{(m)}\|_\infty .
\]
For \(m=1\) and \(m=2\), these bounds have the same order as the estimates
obtained above, namely \(a^{-1/2}/b\), but with different leading constants:
our leading constants are smaller by asymptotic factors \(\pi\) for the first
derivative and \(4\) for the second derivative. Since we do not pursue
higher-order derivatives here, we make no formal comparison for \(m\ge3\).

A similar comparison can be made for weighted bounds. Lemma~2.1 in
\cite{Gaunt2017} translates, in our notation, into
\[
  \|x f_{h;a,b}^{(m+1)}\|_\infty
  \le
  \frac4b\|h^{(m)}\|_\infty,
  \qquad m\ge1.
\]
For the derivatives considered here, our corresponding weighted estimates
have constant \(2/b\), and hence improve this bound by a factor \(2\). This
constant is the sharp one for the kernel representations used above, in the
sense explained in Remark~\ref{rem:sharpness-nonuniform-stein-factors}.

Finally, the comparison with \cite{Dobler2018} is less direct, since there
the Gamma Stein equation is studied on the whole real line rather than only
on the support \((0,\infty)\) of the target distribution. The kernel
representations used here are intrinsically tied to the positive half-line,
so it is not immediate whether they admit a natural analogue in that setting.
\end{rem}

We can now wrap up this section with the final ingredient of the
Gamma--Stein method: the transfer principle, which rewrites the
\(d_j\)-discrepancy from a Gamma target as an expected Stein operator.

\begin{cor}[Stein transfer principle]
\label{cor:sttranp}
Let \(Y\) be a positive random variable. Fix \(a>0\), \(b>0\), and let
\(G_{a,b}\sim\mathcal G_{a,b}\). Assume that \(Y\) has a finite first moment
if \(j=1\), and a finite second moment if \(j=2\). Then
\begin{equation}
\label{eq:stein-transfer1}
  d_j(Y,G_{a,b})
  =
  \sup_{h\in\mathcal Z_j}
  \left|
    \mathbb E\left[bYf_{h;a,b}'(Y)\right]
    -
    \mathbb E\left[(Y-ab)f_{h;a,b}(Y)\right]
  \right|,
\end{equation}
where \(f_{h;a,b}\) is the Stein solution defined in
\eqref{eq:fh-explicit}, and \(f_{h;a,b}'\) denotes the pointwise
representative \(g_{h;a,b}\) from \eqref{eq:gh-explicit}.
\end{cor}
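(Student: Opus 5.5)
The plan is to read \eqref{eq:stein-transfer1} off the Stein equation, using Lemma~\ref{lem:solution} to justify every expectation involved. Fix \(j\in\{1,2\}\) and \(h\in\mathcal Z_j\).

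First, Lemma~\ref{lem:solution} supplies two facts for \(h\in\mathcal Z_1\), and hence also for \(h\in\mathcal Z_2\) once we check that \(f_{h;a,b}\) is still well defined there:
\begin{itemize}
\item \(f_{h;a,b}\) solves \eqref{eq:stein-equation-pos} pointwise on \((0,\infty)\), with derivative representative \(g_{h;a,b}\);
\item \(f_{h;a,b}\) and \(x g_{h;a,b}\) are bounded.
\end{itemize}
Evaluating the Stein equation at \(x=Y\), which is legitimate since \(Y>0\), gives
\[
  bYf'_{h;a,b}(Y)-(Y-ab)f_{h;a,b}(Y)=h(Y)-\mathcal G_{a,b}h .
\]

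Second, we take expectations and check integrability term by term.
\begin{itemize}
\item The term \(bYf'(Y)\) is bounded, so it is integrable.
\item The term \((Y-ab)f(Y)\) is integrable because \(f\) is bounded and \(\E Y<\infty\).
\item For \(h\in\mathcal Z_1\), we have \(|h(y)|\le |h(1)|+|y-1|\), so \(h(Y)\) is integrable when \(\E Y<\infty\).
\item For \(h\in\mathcal Z_2\), a second-order Taylor bound gives \(|h(y)|\le C(1+y+y^2)\), so \(h(Y)\) is integrable when \(\E Y^2<\infty\).
\item \(h\in L^1(\mathcal G_{a,b})\) in both cases, since Gamma laws have all moments. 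Hence \(\mathcal G_{a,b}h=\E h(G_{a,b})\), and the expectation identity reads
\[
  \E[h(Y)]-\E[h(G_{a,b})]
  =
  \E\bigl[bYf'_{h;a,b}(Y)\bigr]-\E\bigl[(Y-ab)f_{h;a,b}(Y)\bigr].
\]
\end{itemize}
Taking absolute values and the supremum over \(h\in\mathcal Z_j\) then gives \eqref{eq:stein-transfer1} by the definition of \(d_j\).

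The main obstacle is the case \(j=2\). A function \(h\in\mathcal Z_2\) need not have bounded \(h'\), so it need not lie in \(\mathcal Z_1\), and the \(\mathcal Z_1\) statements of Lemma~\ref{lem:solution} do not directly apply. I would handle this in three steps.
\begin{itemize}
\item The definition \eqref{eq:fh-explicit} only needs \(\E[|h'(X)K^{0,1}_{a,b}(X,x)|]<\infty\).
\item This holds because \(|h'(u)|\le |h'(1)|+|u-1|\) grows at most linearly, and the kernel \(K^{0,1}_{a,b}(\cdot,x)\) has Gamma-type tails.
\item The verification that \(f_{h;a,b}\) solves the Stein equation uses only this integrability together with the fundamental theorem of calculus. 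I would therefore re-run that argument, or decompose \(h=h_0+\lambda x+\mu x^2\) with \(h_0\) having bounded \(h_0'\) on the relevant scale and treat the polynomial part by direct computation.
\end{itemize}
This yields a solution with \(f_{h;a,b}\) of at most linear growth and \(x f'_{h;a,b}\) of at most quadratic growth. That growth is exactly what finite second moments of \(Y\) absorb, so integrability in the expectation identity still holds. The rest of the argument is routine.
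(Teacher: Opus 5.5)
Your proposal is correct and is essentially the argument the paper intends; the paper gives no separate proof. That argument is: evaluate the Stein equation of Lemma~\ref{lem:solution} at $Y$, justify taking expectations term by term, and take the supremum over $\mathcal Z_j$. Your extra treatment of $j=2$ is actually needed, because Lemma~\ref{lem:solution} only gives boundedness of $f_{h;a,b}$ and $xg_{h;a,b}$ for $h\in\mathcal Z_1$, and these functions are genuinely unbounded for $h\in\mathcal Z_2$ (for $h(x)=x^2/2$ one gets $f_{h;a,b}(x)=-\{x+b(a+1)\}/2$). So the growth bounds you state are the right fix: they follow from $f_{h;a,b}(x)=-(bxp_{a,b}(x))^{-1}\int_x^\infty\{h(t)-\mathcal G_{a,b}h\}p_{a,b}(t)\,dt$ and a Mills-ratio estimate. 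You should drop the alternative decomposition $h=h_0+\lambda x+\mu x^2$, which does not exist for general $h\in\mathcal Z_2$ (e.g.\ $h''(x)=\cos\log(1+x)$).
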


The regularity and boundedness guaranteed by
Lemma~\ref{lem:solution}, together with the Stein factors from
Lemma~\ref{lem:stein-factors}, make \eqref{eq:stein-transfer1}
tractable in Gamma comparison problems.

\section{Gamma approximation for sums of independent Gamma variates}
\label{sec:main_result}

In this section we turn the Gamma--Stein machinery of
Section~\ref{sec:steingamma} into quantitative bounds for sums of independent
Gamma random variables. Let \(N\ge1\), let
\(G_i\sim\mathcal G_{\alpha_i,\beta_i}\), \(i=1,\ldots,N\), be independent,
and set
\begin{equation}
\label{eq:def-GwN}
  \Gw_N:=\sum_{i=1}^N G_i .
\end{equation}
Let \(U_N\sim\mathcal G_{\alpha,\beta}\), where \(\alpha\) and \(\beta\) are
chosen by matching the mean and variance of \(\Gw_N\):
\begin{equation}
\label{eq:moment-matching}
  \alpha\beta=\sum_{i=1}^N\alpha_i\beta_i,
  \qquad
  \alpha\beta^2=\sum_{i=1}^N\alpha_i\beta_i^2 .
\end{equation}
Equivalently,
\[
  \beta=
  \frac{\sum_{i=1}^N\alpha_i\beta_i^2}
       {\sum_{i=1}^N\alpha_i\beta_i},
  \qquad
  \alpha=
  \frac{\left(\sum_{i=1}^N\alpha_i\beta_i\right)^2}
       {\sum_{i=1}^N\alpha_i\beta_i^2}.
\]
With this choice, \(\Gw_N\) and \(U_N\) have the same mean and variance. The
approximation error is controlled by the discrepancies between the individual
scales \(\beta_i\) and the matched scale \(\beta\).

\begin{thm}
\label{thm:wass_gamma_stein-gamma}
Assume that \(G_i\sim\mathcal G_{\alpha_i,\beta_i}\),
\(i=1,\ldots,N\), are independent, and let
\(U_N\sim\mathcal G_{\alpha,\beta}\), where \(\alpha,\beta\) are chosen by
\eqref{eq:moment-matching}. Set
\[
  A_N:=
  \max_{1\le i\le N}
  \left\{
    \left|\frac{\beta_i}{\beta}-1\right|\beta_i
  \right\},
  \qquad
  B_N:=
  \max_{1\le i\le N}
  \left\{
    \left|\frac{\beta_i}{\beta}-1\right|\beta_i^2
  \right\}.
\]
Then
\[
  d_1(\Gw_N,U_N)\le 2A_N,
  \qquad
  d_2(\Gw_N,U_N)
  \le
  2B_N+\alpha\beta e_\alpha A_N.
\]
Moreover, if \(\alpha>1\), then
\begin{equation}
\label{eq:kolmogorov-bound}
  d_{\mathrm{Kol}}(\Gw_N,U_N)
  \le
  2\left(
  \frac{A_N}{\beta\sqrt{2\pi(\alpha-1)}\,r_\alpha}
  \right)^{1/2},
  \qquad
  r_\alpha=\exp\left(\frac{1}{12\alpha-11}\right).
\end{equation}
\end{thm}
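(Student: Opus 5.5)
The plan is to apply the Stein transfer principle (Corollary~\ref{cor:sttranp}) with \(Y=\Gw_N\), \(a=\alpha\), \(b=\beta\). We then expand the Stein discrepancy by applying the Gamma integration-by-parts identity (Theorem~\ref{thm:gamma-ibp}) to each summand \(G_i\) separately, conditionally on the others. Write \(W=\Gw_N\) and \(W_i=W-G_i\), which is independent of \(G_i\). For a suitable function \(\varphi\), applying \eqref{eq:gamma-ibp} to \(x\mapsto\varphi(x+W_i)\) with parameters \((\alpha_i,\beta_i)\) gives
\[
  \E[(G_i-\alpha_i\beta_i)\varphi(W)]=\beta_i\E[G_i\varphi'(W)] .
\]
Summing over \(i\) and using \(\alpha\beta=\sum_i\alpha_i\beta_i\), I obtain \(\E[(W-\alpha\beta)f_h(W)]=\sum_i\beta_i\E[G_if_h'(W)]\). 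Since \(\E[\beta Wf_h'(W)]=\sum_i\beta\E[G_if_h'(W)]\), the Stein discrepancy equals \(\sum_i(\beta-\beta_i)\E[G_if_h'(W)]\).

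I then apply the same identity once more, now with \(\varphi=f_h'\) in the form \(\E[G_i\varphi(W)]=\alpha_i\beta_i\E\varphi(W)+\beta_i\E[G_i\varphi'(W)]\). The zeroth-order part is \(\sum_i(\beta-\beta_i)\alpha_i\beta_i\,\E f_h'(W)\), which vanishes exactly by the second matching condition \(\alpha\beta^2=\sum_i\alpha_i\beta_i^2\). This leaves
\[
  \E[\beta Wf_h'(W)-(W-\alpha\beta)f_h(W)]=\sum_{i=1}^N(\beta-\beta_i)\beta_i\,\E[G_if_h''(W)].
\]

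\emph{The \(d_1\) bound.} For \(h\in\mathcal Z_1\), since \(0<G_i\le W\), I bound \(|G_if_h''(W)|\le (G_i/W)\,|Wf_h''(W)|\le (2/\beta)\,G_i/W\) by Lemma~\ref{lem:stein-factors}. Hence the discrepancy is at most
\[
  2\sum_i\left|\tfrac{\beta_i}{\beta}-1\right|\beta_i\,\E[G_i/W]\le 2A_N\,\E\Big[\sum_iG_i/W\Big]=2A_N .
\]
Taking the supremum over \(h\in\mathcal Z_1\) gives \(d_1(\Gw_N,U_N)\le 2A_N\).

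\emph{The \(d_2\) bound.} For \(h\in\mathcal Z_2\), I integrate by parts a third time:
\[
  \E[G_if_h''(W)]=\alpha_i\beta_i\E f_h''(W)+\beta_i\E[G_if_h'''(W)] .
\]
The \(f_h'''\) part is handled as above using \(\|xf_h'''\|_\infty\le2/\beta\), and it yields \(2B_N\). The \(f_h''\) part is bounded by \(\|f_h''\|_\infty\le e_\alpha/\beta\), giving
\[
  \sum_i\left|\tfrac{\beta_i}{\beta}-1\right|\beta_i\,\alpha_i\beta_i\,e_\alpha\le A_Ne_\alpha\sum_i\alpha_i\beta_i=\alpha\beta e_\alpha A_N .
\]

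\emph{The Kolmogorov bound.} This follows from \eqref{kolmosmoo} once the maximal Gamma density is controlled. For \(\alpha>1\) the density is maximal at the mode \((\alpha-1)\beta\), where it equals \((\alpha-1)^{\alpha-1}e^{-(\alpha-1)}/(\Gamma(\alpha)\beta)\). I write \(\Gamma(\alpha)=\Gamma((\alpha-1)+1)\) and use Robbins' Stirling lower bound \(\Gamma(x+1)\ge\sqrt{2\pi x}\,x^xe^{-x}e^{1/(12x+1)}\) with \(x=\alpha-1\). Since \(12(\alpha-1)+1=12\alpha-11\), this gives \(C^1_{U_N}\le 1/(\beta\sqrt{2\pi(\alpha-1)}\,r_\alpha)\). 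Then \(\sqrt{2C\cdot 2A_N}=2\sqrt{CA_N}\), which is \eqref{eq:kolmogorov-bound}.

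\emph{Main obstacle.} I expect the main difficulty to be justifying each integration by parts, i.e.\ checking the integrability hypothesis of Theorem~\ref{thm:gamma-ibp} conditionally on \(W_i\). For \(\mathcal Z_1\), Lemma~\ref{lem:solution} gives boundedness of \(f_h\), \(f_h'\), \(xf_h'\) and \(xf_h''\), which suffices, since e.g.\ \(|x\,\partial_xf_h'(x+W_i)|\le|(x+W_i)f_h''(x+W_i)|\). For \(\mathcal Z_2\), the first step uses \(f_h\) and \(f_h'\), which need not be bounded because \(h'\) may grow linearly. There I would first reduce by subtracting an affine function from \(h\): this does not change \(d_2\), since the first two moments agree, and the Stein solution is linear in \(h\). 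Failing that, I would truncate \(h\) and pass to the limit using the finite second moments, so that the first two steps may be run with the \(\mathcal Z_1\) estimates. The third step only needs the bounded quantities \(f_h''\) and \(xf_h'''\) from Lemma~\ref{lem:solution}.
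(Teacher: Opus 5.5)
Your proposal is correct and follows the paper's proof essentially step by step. It uses the same two conditional integrations by parts with the variance-matching cancellation, the same weighted Stein factors $\|xf_h''\|_\infty,\|xf_h'''\|_\infty\le 2/\beta$ and $\|f_h''\|_\infty\le e_\alpha/\beta$, and the same smoothing argument with Robbins' bound at the mode. Your worry about integrability for $h\in\mathcal Z_2$ is legitimate, since the paper simply starts its $d_2$ argument from the second-order identity it justified only for $\mathcal Z_1$. However, subtracting an affine function cannot make $h'$ bounded when only $h''$ is bounded, so it is your truncation route that closes this point: for instance, clip $h'$ at $\pm R$, which keeps $|h''|\le 1$, then let $R\to\infty$ using the finite second moments.
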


\begin{proof}
Let \(h\in\mathcal Z_1\). Write
\[
  f_h=f_{h;\alpha,\beta},
  \qquad
  g_h=g_{h;\alpha,\beta},
  \qquad
  \ell_h=\ell_{h;\alpha,\beta}.
\]
By Lemma~\ref{lem:solution}, \(f_h'=g_h\) and \(g_h'=\ell_h\) on
\((0,\infty)\). The Stein transfer principle gives
\begin{equation}
\label{eq:stein-start-adaptive-proof}
  \E[h(\Gw_N)]-\E[h(U_N)]
  =
  \E[\beta \Gw_N g_h(\Gw_N)]
  -
  \E[(\Gw_N-\alpha\beta)f_h(\Gw_N)] .
\end{equation}

For each \(i\), set \(\Gw_N^{(i)}:=\Gw_N-G_i\). Since
\(\sum_i\alpha_i\beta_i=\alpha\beta\),
\[
  \E[(\Gw_N-\alpha\beta)f_h(\Gw_N)]
  =
  \sum_{i=1}^N
  \E[(G_i-\alpha_i\beta_i)f_h(G_i+\Gw_N^{(i)})].
\]
For fixed \(i\), put \(\psi_i(x):=\E[f_h(x+\Gw_N^{(i)})]\). Since \(G_i\)
and \(\Gw_N^{(i)}\) are independent, and since
\(\psi_i'(x)=\E[g_h(x+\Gw_N^{(i)})]\), the Gamma integration-by-parts identity
gives
\[
  \E[(\Gw_N-\alpha\beta)f_h(\Gw_N)]
  =
  \sum_{i=1}^N
  \beta_i\E[G_i g_h(\Gw_N)].
\]
The required integrability follows from Lemma~\ref{lem:solution}, since
\[
  \E[G_i|\psi_i'(G_i)|]
  \le
  \E\left[\frac{G_i}{\Gw_N}\,\Gw_N|g_h(\Gw_N)|\right]
  <\infty .
\]
Substituting this identity into \eqref{eq:stein-start-adaptive-proof} yields
\[
  \E[h(\Gw_N)]-\E[h(U_N)]
  =
  -\beta
  \sum_{i=1}^N
  \left(\frac{\beta_i}{\beta}-1\right)
  \E[G_i g_h(\Gw_N)] .
\]
By variance matching,
\[
  \sum_{i=1}^N
  \left(\frac{\beta_i}{\beta}-1\right)\alpha_i\beta_i
  =
  \frac1\beta\sum_{i=1}^N\alpha_i\beta_i^2
  -
  \sum_{i=1}^N\alpha_i\beta_i
  =
  0.
\]
Hence
\[
  \E[h(\Gw_N)]-\E[h(U_N)]
  =
  -\beta
  \sum_{i=1}^N
  \left(\frac{\beta_i}{\beta}-1\right)
  \E[(G_i-\alpha_i\beta_i)g_h(\Gw_N)] .
\]
Applying integration by parts once more, now to
\(x\mapsto\E[g_h(x+\Gw_N^{(i)})]\), gives
\begin{equation}
\label{eq:maineq}
  \E[h(\Gw_N)]-\E[h(U_N)]
  =
  -\beta
  \sum_{i=1}^N
  \left(\frac{\beta_i}{\beta}-1\right)
  \beta_i
  \E[G_i\ell_h(\Gw_N)] .
\end{equation}
Using \(\|x\ell_h\|_\infty\le2/\beta\), we obtain
\[
  |\E[h(\Gw_N)]-\E[h(U_N)]|
  \le
  2A_N
  \sum_{i=1}^N
  \E\left[\frac{G_i}{\Gw_N}\right]
  =
  2A_N.
\]
Taking the supremum over \(h\in\mathcal Z_1\) proves the \(d_1\)-bound.

We now let \(h\in\mathcal Z_2\). Starting from \eqref{eq:maineq}, write
\[
\begin{aligned}
  \E[h(\Gw_N)]-\E[h(U_N)]
  &=
  -\beta
  \sum_{i=1}^N
  \left(\frac{\beta_i}{\beta}-1\right)
  \beta_i
  \E[(G_i-\alpha_i\beta_i)\ell_h(\Gw_N)] \\
  &\quad
  -\beta
  \sum_{i=1}^N
  \left(\frac{\beta_i}{\beta}-1\right)
  \alpha_i\beta_i^2
  \E[\ell_h(\Gw_N)] .
\end{aligned}
\]
Applying integration by parts to
\(x\mapsto\E[\ell_h(x+\Gw_N^{(i)})]\), and using \(\ell_h'=m_h\), gives
\[
\begin{aligned}
  \E[h(\Gw_N)]-\E[h(U_N)]
  &=
  -\beta
  \sum_{i=1}^N
  \left(\frac{\beta_i}{\beta}-1\right)
  \beta_i^2
  \E[G_i m_h(\Gw_N)]  
  -\beta
  \sum_{i=1}^N
  \left(\frac{\beta_i}{\beta}-1\right)
  \alpha_i\beta_i^2
  \E[\ell_h(\Gw_N)] .
\end{aligned}
\]
The Stein-factor bounds \(\|xm_h\|_\infty\le2/\beta\) and
\(\|\ell_h\|_\infty\le e_\alpha/\beta\) imply
\[
\begin{aligned}
  |\E[h(\Gw_N)]-\E[h(U_N)]|
  &\le
  2
  \sum_{i=1}^N
  \left|\frac{\beta_i}{\beta}-1\right|\beta_i^2
  \E\left[\frac{G_i}{\Gw_N}\right]  
  +e_\alpha
  \sum_{i=1}^N
  \left|\frac{\beta_i}{\beta}-1\right|\alpha_i\beta_i^2 .
\end{aligned}
\]
The first term is at most \(2B_N\), because
\(\sum_i\E[G_i/\Gw_N]=1\). For the second term,
\[
  \sum_{i=1}^N
  \left|\frac{\beta_i}{\beta}-1\right|\alpha_i\beta_i^2
  \le
  A_N\sum_{i=1}^N\alpha_i\beta_i
  =
  A_N\alpha\beta .
\]
Thus
\[
  |\E[h(\Gw_N)]-\E[h(U_N)]|
  \le
  2B_N+\alpha\beta e_\alpha A_N.
\]
Taking the supremum over \(h\in\mathcal Z_2\) proves the \(d_2\)-bound.

It remains to prove the Kolmogorov bound. By the smoothing inequality
\eqref{kolmosmoo},
\[
  d_{\mathrm{Kol}}(\Gw_N,U_N)
  \le
  \sqrt{2C_{\alpha,\beta}\,d_1(\Gw_N,U_N)},
  \qquad
  C_{\alpha,\beta}:=\sup_{x>0}p_{\alpha,\beta}(x).
\]
If \(\alpha>1\), the density of \(\mathcal G_{\alpha,\beta}\) is maximized at
\(x=\beta(\alpha-1)\). Robbins' refinement of Stirling's formula
\citep{Robbins1955} gives
\[
  C_{\alpha,\beta}
  \le
  \frac{1}{\beta\sqrt{2\pi(\alpha-1)}\,r_\alpha},
  \qquad
  r_\alpha=\exp\left(\frac{1}{12\alpha-11}\right).
\]
Combining this estimate with the \(d_1\)-bound gives
\eqref{eq:kolmogorov-bound}.
\end{proof}

Theorem~\ref{thm:wass_gamma_stein-gamma} reduces the approximation problem
to \(A_N\) and \(B_N\). Under \eqref{eq:moment-matching}, the matched scale
\(\beta\) is a weighted mean of the individual scales:
\begin{equation}
\label{eq:beta_wmean}
  \beta
  =
  \sum_{i=1}^N w_i\beta_i,
  \qquad
  w_i
  :=
  \frac{\alpha_i\beta_i}
       {\sum_{j=1}^N\alpha_j\beta_j}.
\end{equation}
Here \(w_i>0\) and \(\sum_iw_i=1\). Thus, whenever the individual scales are
all of the same order, the matched scale is of that order as well.

\begin{lem}
\label{lem:condition_rate_discrepancy}
Let \(N\ge1\), and let \(\{\alpha_i\}_{i=1}^N\) and
\(\{\beta_i\}_{i=1}^N\) be positive real numbers. Let \(\alpha,\beta>0\) be
defined by \eqref{eq:moment-matching}. Assume that there exists a positive
quantity \(\tau\), possibly depending on the parameters, such that
\(\beta_i\asymp\tau\) uniformly in \(i=1,\ldots,N\). Then
\(\beta\asymp\tau\),
\[
  A_N=O(\beta),
  \qquad
  B_N=O(\beta^2),
\]
and consequently
\[
  d_1(\Gw_N,U_N)=O(\beta),
  \qquad
  d_2(\Gw_N,U_N)
  =
  O(\alpha\beta^2e_\alpha+\beta^2).
\]
If, in addition, \(\alpha>1\), then
\[
  d_{\mathrm{Kol}}(\Gw_N,U_N)
  =
  O\bigl((\alpha-1)^{-1/4}\bigr).
\]
\end{lem}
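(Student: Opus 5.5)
The plan is to read \(\beta_i\asymp\tau\) uniformly as the existence of constants \(0<c\le C<\infty\), independent of \(i\), \(N\) and of the parameters, such that \(c\tau\le\beta_i\le C\tau\) for all \(i\). The proof then consists of three short steps.

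\emph{First, the matched scale.} By \eqref{eq:beta_wmean}, \(\beta=\sum_i w_i\beta_i\) with \(w_i>0\) and \(\sum_i w_i=1\). Hence \(\beta\) lies in the convex hull of the \(\beta_i\), so \(c\tau\le\beta\le C\tau\), and therefore \(\beta\asymp\tau\).

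\emph{Second, the discrepancies.} For each \(i\) we have \(\beta_i/\beta\le C/c\), so
\[
  \left|\frac{\beta_i}{\beta}-1\right|\le \frac{C}{c}+1 .
\]
Combining this with \(\beta_i\le C\tau\le (C/c)\beta\) gives
\[
  A_N\le \Bigl(\frac{C}{c}+1\Bigr)\frac{C}{c}\,\beta,
  \qquad
  B_N\le \Bigl(\frac{C}{c}+1\Bigr)\Bigl(\frac{C}{c}\Bigr)^2\beta^2 .
\]
Inserting these estimates into Theorem~\ref{thm:wass_gamma_stein-gamma} yields \(d_1(\Gw_N,U_N)\le 2A_N=O(\beta)\). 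It also yields
\[
  d_2(\Gw_N,U_N)\le 2B_N+\alpha\beta e_\alpha A_N=O(\beta^2+\alpha\beta^2e_\alpha).
\]

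\emph{Third, the Kolmogorov bound.} Use \eqref{eq:kolmogorov-bound}. Since \(A_N/\beta=O(1)\), only \(r_\alpha\) needs attention. For \(\alpha>1\) we have \(12\alpha-11>1\), so \(1/(12\alpha-11)\in(0,1)\). Hence \(r_\alpha\ge1\), and \(1/r_\alpha\le1\). The bracket in \eqref{eq:kolmogorov-bound} is therefore \(O\bigl((\alpha-1)^{-1/2}\bigr)\), and taking the square root gives \(O\bigl((\alpha-1)^{-1/4}\bigr)\).

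No step is really an obstacle. The only point requiring care is making explicit that the implied constants in \(O(\cdot)\) depend only on the ratio \(C/c\) from the uniform comparability assumption, and not on \(N\) or \(\alpha\). The key observation is that the matched scale \(\beta\) cancels exactly in the ratio \(A_N/\beta\) appearing in the Kolmogorov bound.
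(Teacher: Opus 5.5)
Your proposal is correct and follows the paper's proof essentially step for step. The weighted-mean representation \eqref{eq:beta_wmean} gives \(\beta\asymp\tau\), uniform comparability gives \(A_N=O(\beta)\) and \(B_N=O(\beta^2)\), and Theorem~\ref{thm:wass_gamma_stein-gamma} together with \(r_\alpha\ge1\) gives the three distance bounds, the only difference being that you write the implied constants explicitly in terms of \(C/c\) where the paper leaves them implicit.
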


\begin{proof}
Since \eqref{eq:beta_wmean} expresses \(\beta\) as a weighted mean of
\(\beta_1,\ldots,\beta_N\),
\[
  \min_i\beta_i\le \beta\le \max_i\beta_i.
\]
The assumption \(\beta_i\asymp\tau\) uniformly in \(i\) therefore implies
\(\beta\asymp\tau\). Hence there exist constants \(0<c<C<\infty\),
independent of the regime, such that \(c\beta\le \beta_i\le C\beta\) for all
\(i\). It follows immediately that
\[
  A_N=O(\beta),
  \qquad
  B_N=O(\beta^2).
\]
Combining these estimates with Theorem~\ref{thm:wass_gamma_stein-gamma}
gives
\[
  d_1(\Gw_N,U_N)=O(\beta),
  \qquad
  d_2(\Gw_N,U_N)
  =
  O(\beta^2+\alpha\beta^2e_\alpha).
\]
For the Kolmogorov bound, use \eqref{eq:kolmogorov-bound}, the estimate
\(A_N=O(\beta)\), and \(r_\alpha\ge1\) for \(\alpha>1\). This gives
\[
  d_{\mathrm{Kol}}(\Gw_N,U_N)
  =
  O\left(
  \frac{A_N}{\beta\sqrt{\alpha-1}}
  \right)^{1/2}
  =
  O\bigl((\alpha-1)^{-1/4}\bigr).
\]
\end{proof}
\section{Satterthwaite approximations}
\label{sec:satt}

We now return to the statistical motivation of the paper. The examples below
come from different settings---heteroscedastic variance estimation, dependent
Gaussian observations, Wishart traces, and high-dimensional repeated-measures
testing---but they share the same algebraic core. In each case, the statistic
can be diagonalized as a finite sum
\[
  W=\sum_{i=1}^N Y_i
\]
of independent Gamma random variables with possibly unequal scale parameters.
The Satterthwaite approximation replaces this sum by a single Gamma
distribution with matching mean and variance.

Throughout this section, we write
\[
  Y_i\sim\mathcal G_{\alpha_i,2\tau_i},
  \qquad i=1,\ldots,N,
\]
with \(Y_1,\ldots,Y_N\) independent. Thus the scale parameters in
Theorem~\ref{thm:wass_gamma_stein-gamma} are \(\beta_i=2\tau_i\). If the
moment-matched Gamma approximation is \(U\sim\mathcal G_{\nu/2,2\tau}\),
with \(\mu=\E[W]\) and \(\tau=\mu/\nu\), define
\[
  \Delta^{(r)}
  :=
  \max_i
  \left\{
  \left|\frac{\tau_i}{\tau}-1\right|\tau_i^r
  \right\},
  \qquad r=1,2.
\]
Then Theorem~\ref{thm:wass_gamma_stein-gamma} gives directly
\[
  d_1(W,U)\le 4\Delta^{(1)},\qquad
  d_2(W,U)\le 8\Delta^{(2)}+2\mu e_{\nu/2}\Delta^{(1)},
\]
and, if \(\nu>2\),
\[
  d_{\mathrm{Kol}}(W,U)
  \le
  2
  \left(
    \frac{\nu\Delta^{(1)}}
         {\mu\sqrt{2\pi(\nu/2-1)}\,r_{\nu/2}}
  \right)^{1/2}.
\]
Indeed, \(A_N=2\Delta^{(1)}\), \(B_N=4\Delta^{(2)}\), and
\(\alpha\beta=\mu\).

Related Gamma approximation bounds for sums of independent nonnegative
random variables were obtained by \citet{LiuXia2021}, in particular through
their Corollary~3.4 for convolutions of independent Gamma variables. The
bounds below are different in spirit: they are tailored to the
Satterthwaite construction and express the error directly through the
discrepancy between the individual half-scales \(\tau_i\) and the matched
half-scale \(\tau=\mu/\nu\).

\subsection{Unequal-variance pooled variances}
\label{subsec:synthesis-variance}

We begin with the classical setting behind Welch--Satterthwaite type
approximations. Let \(K\) be fixed and suppose that, for \(k=1,\ldots,K\),
\(X_{k,1},\ldots,X_{k,n_k}\overset{\mathrm{iid}}{\sim}
N(0,\sigma_k^2)\), with the samples mutually independent across groups.
Consider
\begin{equation}
\label{eq:pooled-variance}
  W_n
  :=
  \sum_{k=1}^K \frac{S_k^2}{n_k},
  \qquad
  S_k^2
  :=
  \frac{1}{n_k-1}
  \sum_{i=1}^{n_k}
  \left(X_{k,i}-\overline X_k\right)^2 .
\end{equation}
Since
\[
  \frac{S_k^2}{n_k}
  \sim
  \frac{\sigma_k^2}{n_k(n_k-1)}\chi^2_{n_k-1}
  \sim
  \mathcal G_{(n_k-1)/2,\;2\sigma_k^2/(n_k(n_k-1))},
\]
the half-scales are \(\tau_{k,n}:=\sigma_k^2/(n_k(n_k-1))\). The
Satterthwaite approximation is
\(U_n\sim\mathcal G_{\nu_n/2,2\mu_n/\nu_n}\), where
\begin{equation}
\label{eq:satterthwaite-Un}
  \mu_n
  :=
  \sum_{k=1}^K\frac{\sigma_k^2}{n_k},
  \qquad
  \nu_n
  :=
  \frac{
    \left(\sum_{k=1}^K \sigma_k^2/n_k\right)^2
  }{
    \sum_{k=1}^K \sigma_k^4/(n_k^2(n_k-1))
  }.
\end{equation}
The matched half-scale is \(\tau_n=\mu_n/\nu_n\).

\begin{cor}
\label{cor:satterthwaite_steinweight}
Let \(W_n\) be defined by \eqref{eq:pooled-variance}, and let
\(U_n\sim\mathcal G_{\nu_n/2,2\mu_n/\nu_n}\), with \(\mu_n,\nu_n\) defined
by \eqref{eq:satterthwaite-Un}. For \(r=1,2\), set
\[
  \Delta_n^{(r)}
  :=
  \max_{1\le k\le K}
  \left\{
  \left|
  \frac{\tau_{k,n}}{\mu_n/\nu_n}-1
  \right|
  \tau_{k,n}^r
  \right\}.
\]
Then
\[
  d_1(W_n,U_n)\le 4\Delta_n^{(1)},\qquad
  d_2(W_n,U_n)
  \le
  8\Delta_n^{(2)}
  +
  2\mu_n e_{\nu_n/2}\Delta_n^{(1)}.
\]
If \(\nu_n>2\), then
\[
  d_{\mathrm{Kol}}(W_n,U_n)
  \le
  2
  \left(
    \frac{\nu_n\Delta_n^{(1)}}{\mu_n
    \sqrt{2\pi(\nu_n/2-1)}\,r_{\nu_n/2}}
  \right)^{1/2}.
\]
\end{cor}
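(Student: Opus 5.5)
The corollary follows by specializing Theorem~\ref{thm:wass_gamma_stein-gamma}, in the reparametrised form recorded at the start of Section~\ref{sec:satt}, to the pooled-variance statistic. I would set \(N=K\) and \(Y_k:=S_k^2/n_k\). These summands are independent because the groups are independent. By Cochran's theorem, \((n_k-1)S_k^2/\sigma_k^2\sim\chi^2_{n_k-1}\), so
\[
Y_k\sim\mathcal G_{(n_k-1)/2,\,2\tau_{k,n}}, \qquad \tau_{k,n}=\sigma_k^2/(n_k(n_k-1)).
\]
Thus \(W_n=\sum_k Y_k\) fits the framework with \(\alpha_k=(n_k-1)/2\) and \(\beta_k=2\tau_{k,n}\).

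The one real check is that \(U_n\) is exactly the moment-matched Gamma law of \eqref{eq:moment-matching}. For the mean,
\[
\sum_k\alpha_k\beta_k=\sum_k (n_k-1)\tau_{k,n}=\sum_k\sigma_k^2/n_k=\mu_n.
\]
For the variance,
\[
\sum_k\alpha_k\beta_k^2=\sum_k 2(n_k-1)\tau_{k,n}^2=\sum_k 2\sigma_k^4/(n_k^2(n_k-1)).
\]
Solving \eqref{eq:moment-matching} then gives
\[
\beta=\frac{2\sum_k\sigma_k^4/(n_k^2(n_k-1))}{\mu_n}=\frac{2\mu_n}{\nu_n},
\qquad
\alpha=\frac{\mu_n}{\beta}=\frac{\nu_n}{2},
\]
using the definition \eqref{eq:satterthwaite-Un} of \(\nu_n\). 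So \(U_n\sim\mathcal G_{\nu_n/2,2\mu_n/\nu_n}\) is precisely \(\mathcal G_{\alpha,\beta}\), and the matched half-scale is \(\tau=\beta/2=\mu_n/\nu_n\).

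Next I translate the discrepancy quantities. Since \(\beta_k/\beta=\tau_{k,n}/\tau\), we get
\[
A_K=\max_k|\tau_{k,n}/\tau-1|\,2\tau_{k,n}=2\Delta_n^{(1)},
\qquad
B_K=4\Delta_n^{(2)}.
\]
Substituting into Theorem~\ref{thm:wass_gamma_stein-gamma} gives \(d_1\le 4\Delta_n^{(1)}\). It also gives
\[
d_2\le 8\Delta_n^{(2)}+\alpha\beta\,e_{\nu_n/2}\,2\Delta_n^{(1)}=8\Delta_n^{(2)}+2\mu_n e_{\nu_n/2}\Delta_n^{(1)},
\]
since \(\alpha\beta=\mu_n\).

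For the Kolmogorov bound, the condition \(\nu_n>2\) is the same as \(\alpha>1\). Then \eqref{eq:kolmogorov-bound} with \(A_K/\beta=2\Delta_n^{(1)}\nu_n/(2\mu_n)=\nu_n\Delta_n^{(1)}/\mu_n\) gives exactly the stated expression, with \(r_{\nu_n/2}\).

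There is no genuine obstacle here. The step needing most care is the algebraic identification of \(\alpha\) and \(\beta\) with \(\nu_n/2\) and \(2\mu_n/\nu_n\), together with the factor-of-two bookkeeping between scales \(\beta_k\) and half-scales \(\tau_{k,n}\). One should also assume \(n_k\ge2\), so that the shapes \((n_k-1)/2\) are positive.
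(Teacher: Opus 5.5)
Your proposal is correct and follows the paper's route exactly: the corollary is a direct specialization of Theorem~\ref{thm:wass_gamma_stein-gamma} with \(\beta_k=2\tau_{k,n}\), \(A_N=2\Delta_n^{(1)}\), \(B_N=4\Delta_n^{(2)}\), \(\alpha\beta=\mu_n\) and \(A_N/\beta=\nu_n\Delta_n^{(1)}/\mu_n\). Your explicit check that \((\nu_n/2,\,2\mu_n/\nu_n)\) solves \eqref{eq:moment-matching} is the one verification needed, and the paper leaves it implicit.
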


\begin{rem}
\label{rem:satterthwaite-rate}
Assume that \(K\) is fixed and that \(n_k=c_{k,n}n\), where
\(n=\sum_{k=1}^K n_k\), \(\sum_{k=1}^K c_{k,n}=1\), and the proportions
\(c_{k,n}\) are bounded away from \(0\). This balanced regime ensures that
all group sizes grow at the same order; otherwise a small group may dominate
the largest scale discrepancy. If the variances \(\sigma_k^2\) are fixed,
then \(\mu_n\asymp n^{-1}\), \(\nu_n\asymp n\),
\(\tau_{k,n}\asymp n^{-2}\), and \(\tau_n\asymp n^{-2}\). Consequently
\(\Delta_n^{(1)}=O(n^{-2})\) and \(\Delta_n^{(2)}=O(n^{-4})\). Since
\(e_{\nu_n/2}=O(n^{-1/2})\), Corollary~\ref{cor:satterthwaite_steinweight}
gives
\[
  d_1(W_n,U_n)=O(n^{-2}),\qquad
  d_2(W_n,U_n)=O(n^{-7/2}),\qquad
  d_{\mathrm{Kol}}(W_n,U_n)=O(n^{-1/4}).
\]
For \(K=2\), this is the usual Welch--Satterthwaite approximation.
\end{rem}
 
\subsection{Sample variance of a stationary Gaussian time series}
\label{subsec:ts-variance}

The second example shows how serial dependence enters the approximation
through the spectrum of the covariance matrix. Let
\((X_t)_{t\in\mathbb Z}\) be a zero-mean stationary Gaussian process with
\(\Cov[X_t,X_{t+h}]=\gamma_h=\gamma_{-h}\), and assume
\(\sum_{h\in\mathbb Z}|\gamma_h|<\infty\). Its spectral density is
\[
  \F(\omega)
  :=
  \frac{1}{2\pi}
  \sum_{h\in\mathbb Z}\gamma_h e^{\i\omega h},
  \qquad
  \omega\in[-\pi,\pi].
\]
Write \(\sigma^2:=\gamma_0=\V[X_t]\). For
\(\bs X_n:=(X_1,\ldots,X_n)'\), we have
\(\bs X_n\sim N_n(0,\Gamma_n)\), where
\(\Gamma_n=(\gamma_{a-b})_{1\le a,b\le n}\). Write the normalized spectral
decomposition as
\[
  \Gamma_n=\sigma^2Q_n\Lambda_nQ_n',
  \qquad
  \Lambda_n=\mathrm{diag}(\lambda_{1,n},\ldots,\lambda_{n,n}),
  \qquad
  \sum_{j=1}^n\lambda_{j,n}=n.
\]
By Szeg\"o's theorem and its refinements, for every continuous \(g\),
\begin{equation}
\label{eq:szego}
  \frac{1}{n}\sum_{j=1}^n g(\lambda_{j,n})
  \to
  \frac{1}{2\pi}
  \int_{-\pi}^{\pi}
  g\left(\frac{2\pi\F(\omega)}{\sigma^2}\right)
  \,d\omega .
\end{equation}

Consider the normalized sample variance
\begin{equation}
\label{eq:normalized_variance}
  W_n
  :=
  \frac{1}{n\sigma^2}\sum_{t=1}^n X_t^2.
\end{equation}
If \(Z\sim N_n(0,I_n)\), then
\[
  W_n
  \cong
  \frac1n Z'Q_n\Lambda_nQ_n'Z
  \cong
  \sum_{j=1}^n
  \frac{\lambda_{j,n}}{n}\xi_j,
  \qquad
  \xi_1,\ldots,\xi_n\overset{\mathrm{iid}}{\sim}\chi_1^2.
\]
Thus \(\tau_{j,n}:=\lambda_{j,n}/n\). Since
\(\sum_j\lambda_{j,n}=n\), we have \(\mu_n=1\). Put
\begin{equation}
\label{eq:ts-Un}
  \omega_n:=\frac1n\sum_{j=1}^n\lambda_{j,n}^2,
  \qquad
  \nu_n:=\frac{n}{\omega_n},
  \qquad
  \tau_n:=\frac{\omega_n}{n}.
\end{equation}
The Satterthwaite approximation is
\(U_n\sim\mathcal G_{\nu_n/2,2/\nu_n}\).

\begin{cor}
\label{cor:variance_steinweight}
Let \(W_n\) be defined by \eqref{eq:normalized_variance}, and let
\(U_n\sim\mathcal G_{\nu_n/2,2/\nu_n}\), with \(\nu_n\) defined in
\eqref{eq:ts-Un}. For \(r=1,2\), set
\[
  \Delta_n^{(r)}
  :=
  \max_{1\le j\le n}
  \left\{
  \left|
    \frac{\lambda_{j,n}}{\omega_n}-1
  \right|
  \left(\frac{\lambda_{j,n}}{n}\right)^r
  \right\}.
\]
Then
\[
  d_1(W_n,U_n)\le 4\Delta_n^{(1)},\qquad
  d_2(W_n,U_n)
  \le
  8\Delta_n^{(2)}
  +
  2e_{\nu_n/2}\Delta_n^{(1)}.
\]
If \(\nu_n>2\), then
\[
  d_{\mathrm{Kol}}(W_n,U_n)
  \le
  2
  \left(
  \frac{n\Delta_n^{(1)}}
       {\omega_n\sqrt{2\pi(\nu_n/2-1)}\,r_{\nu_n/2}}
  \right)^{1/2}.
\]
\end{cor}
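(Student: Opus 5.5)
The corollary is a direct specialization of Theorem~\ref{thm:wass_gamma_stein-gamma}, through its reformulation at the start of Section~\ref{sec:satt} in terms of half-scales. The plan is to exhibit \(W_n\) as a sum of independent Gamma variables, identify the matched parameters, and then rewrite \(\Delta^{(r)}\) and the Kolmogorov constant in the notation of the statement.

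\textbf{Step 1: identify the summands.} Since \(Q_n\) is orthogonal, \(Q_n'Z\sim N_n(0,I_n)\). Hence \(W_n\cong\sum_{j=1}^n(\lambda_{j,n}/n)\xi_j\) with \(\xi_j\) i.i.d.\ \(\chi^2_1\). Each summand is \(Y_j\sim\mathcal G_{1/2,\,2\lambda_{j,n}/n}\), so \(\alpha_j=1/2\) and \(\tau_j=\tau_{j,n}=\lambda_{j,n}/n\). Eigenvalues equal to zero contribute nothing to \(W_n\) and can be discarded, so that all scales are positive as the theorem requires. Discarding them only removes terms from the maximum defining \(\Delta_n^{(r)}\), so the stated maximum over all \(j\) is still an upper bound; this is the only slightly delicate point.

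\textbf{Step 2: compute the matched parameters.} The mean is \(\mu_n=\sum_j\tfrac12\cdot 2\lambda_{j,n}/n=1\). The variance is \(\sum_j 2\lambda_{j,n}^2/n^2=2\omega_n/n\). Matching \(\alpha\beta=1\) and \(\alpha\beta^2=2\omega_n/n\) gives \(\beta=2\omega_n/n=2\tau_n\) and \(\alpha=n/(2\omega_n)=\nu_n/2\). Thus \(U_n\sim\mathcal G_{\nu_n/2,2/\nu_n}\) is indeed the moment-matched Gamma law, with \(\tau=\mu_n/\nu_n=\omega_n/n\).

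\textbf{Step 3: substitute into the half-scale bounds.} We have \(\tau_{j,n}/\tau=\lambda_{j,n}/\omega_n\). It follows that \(\Delta^{(r)}\) coincides with \(\Delta_n^{(r)}\) as defined in the statement, with \(A_N=2\Delta_n^{(1)}\) and \(B_N=4\Delta_n^{(2)}\). The displayed consequences of Theorem~\ref{thm:wass_gamma_stein-gamma} then give
\begin{itemize}
\item \(d_1\le 4\Delta_n^{(1)}\);
\item \(d_2\le 8\Delta_n^{(2)}+2\mu_n e_{\nu_n/2}\Delta_n^{(1)}\), which becomes the stated bound because \(\mu_n=1\);
\item for \(\nu_n>2\), the Kolmogorov bound with \(\nu_n/\mu_n=n/\omega_n\), which yields exactly the displayed expression.
\end{itemize}

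No step is a genuine obstacle. The only care needed is the bookkeeping of the factor \(2\) between scale and half-scale, and the treatment of possible zero eigenvalues in Step 1.
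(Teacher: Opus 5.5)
Your proposal is correct and follows the paper's route: the corollary comes from inserting the diagonalization $W_n\cong\sum_j(\lambda_{j,n}/n)\xi_j$ into the half-scale form of Theorem~\ref{thm:wass_gamma_stein-gamma} stated at the start of Section~\ref{sec:satt}, with $\mu_n=1$, $\tau_n=\omega_n/n$, $A_N=2\Delta_n^{(1)}$ and $B_N=4\Delta_n^{(2)}$. Your zero-eigenvalue caveat is harmless but not needed. Such terms equal $0$ inside the maximum, so dropping them changes nothing, and under the standing assumptions ($\gamma_0>0$, $\gamma_h\to0$) the matrix $\Gamma_n$ is nonsingular anyway.
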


\begin{rem}
\label{rem:ts-rate}
The quantity \(\omega_n\) measures the spectral dispersion of the normalized
covariance matrix. By \eqref{eq:szego} with \(g(x)=x^2\), one has
\(\omega_n=O(1)\), while Cauchy--Schwarz gives \(\omega_n\ge1\). Thus
\(\nu_n=n/\omega_n\asymp n\). Moreover,
\[
  \max_{1\le j\le n}\lambda_{j,n}
  \le
  \frac{1}{\sigma^2}
  \sum_{h\in\mathbb Z}|\gamma_h|.
\]
It follows that \(\Delta_n^{(1)}=O(n^{-1})\) and
\(\Delta_n^{(2)}=O(n^{-2})\). Corollary~\ref{cor:variance_steinweight}
therefore gives
\[
  d_1(W_n,U_n)=O(n^{-1}),\qquad
  d_2(W_n,U_n)=O(n^{-3/2}),\qquad
  d_{\mathrm{Kol}}(W_n,U_n)=O(n^{-1/4}).
\]
\end{rem}

An illustration is given in Figure~\ref{fig:ts-wass-empirical}. We consider
stationary Gaussian AR(1) processes
\[
  X_t=\rho X_{t-1}+\varepsilon_t,
  \qquad
  \varepsilon_t\sim N(0,1-\rho^2),
  \qquad |\rho|<1,
\]
so that \(\V[X_t]=1\). Hence \(W_n\) is the normalized sample variance in
Corollary~\ref{cor:variance_steinweight}. For \(n=10,20,\ldots,200\), we
approximate \(d_1(W_n,U_n)\) by Monte Carlo simulation with
\(M=1,\!000,\!000\) replications. We also compare the Satterthwaite
approximation with the normal approximation \(Z_n\sim N(1,2/\nu_n)\).

\begin{figure}[!ht]
    \centering
    \includegraphics[width=0.49\linewidth]{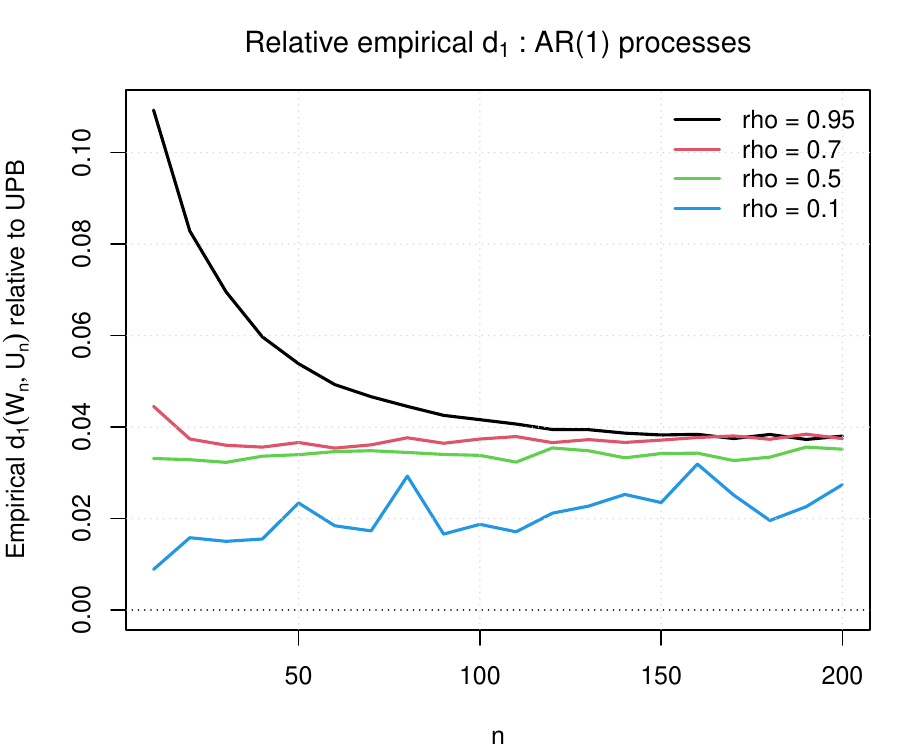}
    \includegraphics[width=0.49\linewidth]{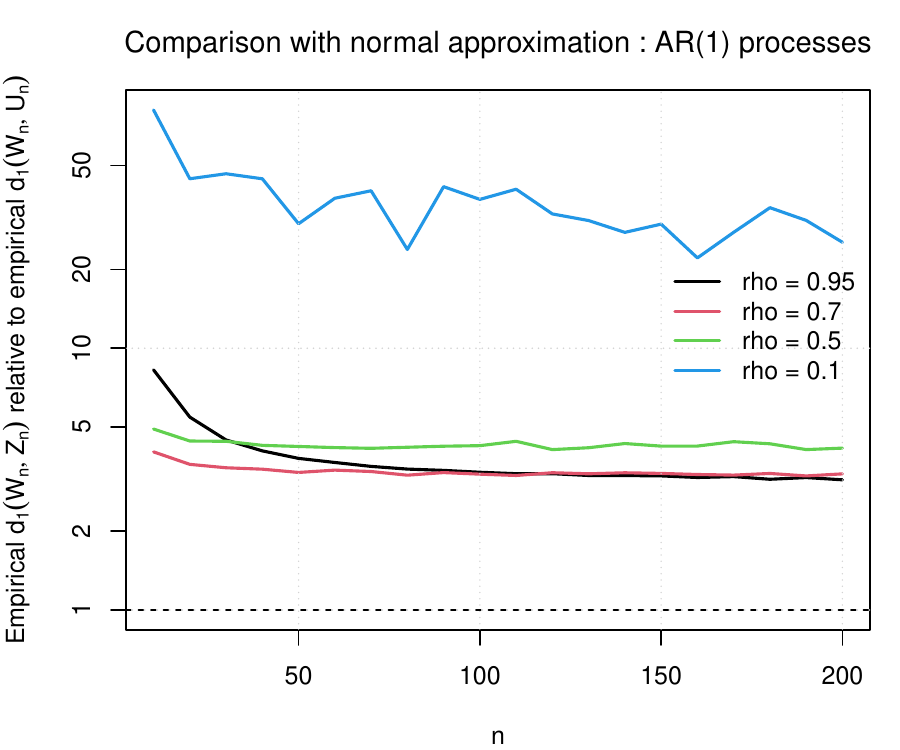}
    \caption{\emph{Left: empirical \(d_1(W_n,U_n)\) relative to the upper bound from
    Corollary~\ref{cor:variance_steinweight}. Right: empirical
    \(d_1(W_n,Z_n)\) relative to empirical \(d_1(W_n,U_n)\).}}
    \label{fig:ts-wass-empirical}
\end{figure}

The left panel shows that the theoretical upper bound has the correct scale
in this example: the empirical Wasserstein distance remains below the bound
and the ratio is fairly stable over the displayed range of sample sizes. The
effect of dependence is visible. Larger values of \(|\rho|\) produce a more
heterogeneous covariance spectrum and therefore a larger scale-discrepancy
term \(\Delta_n^{(1)}\).

The right panel compares the Satterthwaite approximation with the normal
approximation having the same first two moments. The ratios are systematically
larger than one, indicating that the Gamma approximation is closer in
Wasserstein distance throughout the simulation. This is natural here:
\(W_n\) is a positive quadratic form, and the Gamma approximation preserves
both positivity and the skewed finite-sample shape, whereas the normal
approximation is symmetric. When \(\rho=0\), the covariance spectrum is flat
and the Satterthwaite approximation is exact.

\subsection{Traces of Wishart random matrices}
\label{subsec:trace-wishart}

We next consider trace functionals of Wishart matrices. This example is
useful because the same Satterthwaite mechanism covers both the standard
one-sample Wishart model and sums of independent Wishart matrices with
different covariance structures.

Let \(\mathfrak W_n\sim\W_d(n,n^{-1}\Sigma)\), where \(\Sigma\) is symmetric
positive definite, and define \(W_n:=\tr(\mathfrak W_n)\). If
\(\Sigma=\sigma^2I_d\), then \(W_n\sim(\sigma^2/n)\chi^2_{nd}\). For general
\(\Sigma\), writing \(\lambda_1,\ldots,\lambda_d\) for its eigenvalues, the
Laplace transform gives \citep{Khatri1971}
\[
  W_n
  \cong
  \sum_{j=1}^d
  \frac{\lambda_j}{n}\xi_j,
  \qquad
  \xi_1,\ldots,\xi_d\overset{\mathrm{iid}}{\sim}\chi_n^2.
\]
Thus \(\tau_{j,n}:=\lambda_j/n\). Here
\begin{equation}
\label{eq:trace-Un}
  \mu=\tr(\Sigma),
  \qquad
  \nu_n
  =
  n\frac{\tr(\Sigma)^2}{\tr(\Sigma^2)},
  \qquad
  \tau_n
  =
  \frac{\tr(\Sigma^2)}{n\tr(\Sigma)}.
\end{equation}
The Satterthwaite approximation is
\(U_n\sim\mathcal G_{\nu_n/2,2\tr(\Sigma)/\nu_n}\).

\begin{cor}
\label{cor:trace_steinweight}
Let \(W_n=\tr(\mathfrak W_n)\), with
\(\mathfrak W_n\sim\W_d(n,n^{-1}\Sigma)\), and let
\(U_n\sim\mathcal G_{\nu_n/2,2\tr(\Sigma)/\nu_n}\), with \(\nu_n\) defined in
\eqref{eq:trace-Un}. For \(r=1,2\), set
\[
  \Delta_n^{(r)}
  :=
  \max_{1\le j\le d}
  \left\{
  \left|
    \frac{\lambda_j\tr(\Sigma)}
         {\tr(\Sigma^2)}
    -1
  \right|
  \left(\frac{\lambda_j}{n}\right)^r
  \right\}.
\]
Then
\[
  d_1(W_n,U_n)\le 4\Delta_n^{(1)},\qquad
  d_2(W_n,U_n)
  \le
  8\Delta_n^{(2)}
  +
  2\tr(\Sigma)e_{\nu_n/2}\Delta_n^{(1)}.
\]
If \(\nu_n>2\), then
\[
  d_{\mathrm{Kol}}(W_n,U_n)
  \le
  2
  \left(
  \frac{\tr(\Sigma)}{\tr(\Sigma^2)}
  \frac{n\Delta_n^{(1)}}
       {\sqrt{2\pi(\nu_n/2-1)}\,r_{\nu_n/2}}
  \right)^{1/2}.
\]
\end{cor}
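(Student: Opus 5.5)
The plan is to reduce Corollary~\ref{cor:trace_steinweight} to Theorem~\ref{thm:wass_gamma_stein-gamma}, applied through the half-scale reformulation given at the start of Section~\ref{sec:satt}. The first step is to represent $W_n$ as a weighted Gamma sum. By the Khatri representation,
\[
  W_n\cong\sum_{j=1}^d(\lambda_j/n)\xi_j,\qquad \xi_j\overset{\mathrm{iid}}{\sim}\chi^2_n .
\]
Since $\chi^2_n\sim\mathcal G_{n/2,2}$, scaling gives $Y_j:=(\lambda_j/n)\xi_j\sim\mathcal G_{n/2,2\lambda_j/n}$, and these are independent. This places us in the setting of Section~\ref{sec:satt} with $N=d$, $\alpha_j=n/2$ and $\tau_j=\lambda_j/n$.

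The next step is to check that the moment-matching \eqref{eq:moment-matching} produces exactly the parameters in \eqref{eq:trace-Un}. On the mean side,
\[
  \alpha\beta=\sum_j \tfrac n2\cdot\tfrac{2\lambda_j}{n}=\tr(\Sigma)=\mu .
\]
On the variance side,
\[
  \alpha\beta^2=\sum_j\tfrac n2\cdot\tfrac{4\lambda_j^2}{n^2}=\tfrac{2\tr(\Sigma^2)}{n}.
\]
Dividing, $\beta=2\tr(\Sigma^2)/(n\tr(\Sigma))=2\tau_n$, and then $\alpha=n\tr(\Sigma)^2/(2\tr(\Sigma^2))=\nu_n/2$. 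Hence $U_n\sim\mathcal G_{\nu_n/2,\,2\tr(\Sigma)/\nu_n}$, in agreement with the statement.

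Then I compute the discrepancy quantities. The ratio is
\[
  \frac{\tau_j}{\tau_n}=\frac{\lambda_j/n}{\tr(\Sigma^2)/(n\tr(\Sigma))}=\frac{\lambda_j\tr(\Sigma)}{\tr(\Sigma^2)},
\]
so $\Delta^{(r)}$ from Section~\ref{sec:satt} coincides with $\Delta_n^{(r)}$ in the statement. Using $A_N=2\Delta^{(1)}$, $B_N=4\Delta^{(2)}$ and $\alpha\beta=\tr(\Sigma)$, Theorem~\ref{thm:wass_gamma_stein-gamma} gives the $d_1$ and $d_2$ bounds directly.

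For the Kolmogorov bound, the condition $\nu_n>2$ is exactly $\alpha>1$. In \eqref{eq:kolmogorov-bound} we have
\[
  \frac{A_N}{\beta}=\frac{2\Delta_n^{(1)}}{2\tau_n}=\frac{n\tr(\Sigma)}{\tr(\Sigma^2)}\,\Delta_n^{(1)},
\]
which gives the stated expression with $r_{\nu_n/2}$.

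Nothing here is a real obstacle. The only care needed is in the bookkeeping of the scale conventions: $\beta=2\tau$, and the factors $2$ and $4$ in $A_N$ and $B_N$. The Kolmogorov step also requires matching $A_N/\beta$ precisely, so that the prefactor comes out as $\tr(\Sigma)\,n/\tr(\Sigma^2)$.
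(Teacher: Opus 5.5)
Your proposal is correct and follows exactly the paper's route: the corollary is the direct specialization of Theorem~\ref{thm:wass_gamma_stein-gamma}, through the half-scale reformulation at the start of Section~\ref{sec:satt}, to the Khatri representation with \(N=d\), \(\alpha_j=n/2\), \(\tau_j=\lambda_j/n\). Your bookkeeping checks out: \(\beta=2\tau_n\), \(\alpha=\nu_n/2\), \(A_N=2\Delta_n^{(1)}\), \(B_N=4\Delta_n^{(2)}\), \(\alpha\beta=\tr(\Sigma)\), and \(A_N/\beta=n\tr(\Sigma)\Delta_n^{(1)}/\tr(\Sigma^2)\).
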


\begin{rem}
\label{rem:trace_rate}
Assume that \(0<c\le\lambda_j\le C<\infty\), \(j=1,\ldots,d\), with constants
independent of \(d\). Then \(\tr(\Sigma)\asymp d\),
\(\tr(\Sigma^2)\asymp d\), \(\nu_n\asymp nd\),
\(\Delta_n^{(1)}=O(n^{-1})\), and \(\Delta_n^{(2)}=O(n^{-2})\). Since
\(e_{\nu_n/2}=O((nd)^{-1/2})\), Corollary~\ref{cor:trace_steinweight} gives
\[
  d_1(W_n,U_n)=O(n^{-1}),\qquad
  d_2(W_n,U_n)=O(n^{-3/2}d^{1/2})+O(n^{-2}),\qquad
  d_{\mathrm{Kol}}(W_n,U_n)=O((nd)^{-1/4}).
\]
Thus increasing the dimension improves the Kolmogorov bound through the
effective degrees of freedom \(\nu_n\), while the \(d_2\)-bound also carries
the natural size factor \(\tr(\Sigma)\asymp d\). If \(d\asymp n\), the
Kolmogorov rate is of Berry--Esseen order in \(n\).
\end{rem}

The same calculation extends directly to sums of independent Wishart
matrices. Let \(K\in\mathbb N\) be fixed, let
\(n_1,\ldots,n_K\in\mathbb N\), set \(n:=\sum_{k=1}^K n_k\), and consider
\begin{equation}
\label{eq:linearcomb-trace}
  \mathsf W_n
  =
  \sum_{k=1}^K \mathfrak W_{k,n},
  \qquad
  \mathfrak W_{k,n}
  \sim
  \W_d(n_k,n_k^{-1}\Sigma_k),
\end{equation}
with the summands independent. A Satterthwaite approximation for such sums
was proposed by \citet{Tan1983}, based on matching mean and variance through
the generalized variance, that is, the determinant of the
\(d(d+1)/2\times d(d+1)/2\) covariance matrix of \(\mathsf W_n\). Here we
consider only the trace. If \(\lambda_{k1},\ldots,\lambda_{kd}\) are the
eigenvalues of \(\Sigma_k\), then
\[
  W_n:=\tr(\mathsf W_n)
  \cong
  \sum_{k=1}^K\sum_{l=1}^d
  \frac{\lambda_{kl}}{n_k}\xi_{kl},
  \qquad
  \xi_{kl}\sim\chi_{n_k}^2
\]
independently. Hence \(\tau_{kl}:=\lambda_{kl}/n_k\). Write
\begin{equation}
\label{eq:linearcomb-trace-Un}
  \mu
  =
  \sum_{k=1}^K\tr(\Sigma_k),
  \qquad
  \nu
  =
  \frac{
    \left(\sum_{k=1}^K\tr(\Sigma_k)\right)^2
  }{
    \sum_{k=1}^K n_k^{-1}\tr(\Sigma_k^2)
  },
  \qquad
  \tau=\frac{\mu}{\nu}.
\end{equation}
The Satterthwaite approximation is
\(U_n\sim\mathcal G_{\nu/2,2\mu/\nu}\).

\begin{cor}
\label{cor:linearcomb-trace_steinweight}
Let \(W_n=\tr(\mathsf W_n)\), with \(\mathsf W_n\) defined in
\eqref{eq:linearcomb-trace}, and let
\(U_n\sim\mathcal G_{\nu/2,2\mu/\nu}\), with \(\mu,\nu\) defined in
\eqref{eq:linearcomb-trace-Un}. For \(r=1,2\), set
\[
  \Delta_n^{(r)}
  :=
  \max_{\substack{1\le k\le K\\1\le l\le d}}
  \left\{
  \left|
  \frac{\lambda_{kl}/n_k}{\mu/\nu}-1
  \right|
  \left(\frac{\lambda_{kl}}{n_k}\right)^r
  \right\}.
\]
Then
\[
  d_1(W_n,U_n)\le 4\Delta_n^{(1)},\qquad
  d_2(W_n,U_n)
  \le
  8\Delta_n^{(2)}
  +
  2\mu e_{\nu/2}\Delta_n^{(1)}.
\]
If \(\nu>2\), then
\[
  d_{\mathrm{Kol}}(W_n,U_n)
  \le
  2
  \left(
  \frac{\nu\Delta_n^{(1)}}
       {\mu\sqrt{2\pi(\nu/2-1)}\,r_{\nu/2}}
  \right)^{1/2}.
\]
\end{cor}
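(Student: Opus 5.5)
The plan is to reduce Corollary~\ref{cor:linearcomb-trace_steinweight} to the general Satterthwaite bounds stated at the start of Section~\ref{sec:satt}, which are themselves Theorem~\ref{thm:wass_gamma_stein-gamma} written with \(\beta_i=2\tau_i\).

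First, I will identify \(W_n\) as an independent Gamma sum. Each \(\mathfrak W_{k,n}\sim\W_d(n_k,n_k^{-1}\Sigma_k)\) is diagonalized by an orthogonal change of coordinates, which leaves the trace invariant. By the single-Wishart trace representation used before Corollary~\ref{cor:trace_steinweight},
\[
  \tr(\mathfrak W_{k,n})\cong\sum_{l=1}^d\frac{\lambda_{kl}}{n_k}\xi_{kl},
  \qquad \xi_{kl}\sim\chi^2_{n_k}.
\]
This can also be checked via the Laplace transform \(\det(I+2s\Sigma_k/n_k)^{-n_k/2}\). Since the Wishart summands are independent, the full family \(\{\xi_{kl}\}\) is independent. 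Hence \(W_n\) is a sum of \(N=Kd\) independent variables
\[
  Y_{kl}\sim\mathcal G_{n_k/2,\,2\lambda_{kl}/n_k},
\]
so \(\alpha_{kl}=n_k/2\) and \(\tau_{kl}=\lambda_{kl}/n_k\).

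Second, I will verify that \(\mu,\nu\) in \eqref{eq:linearcomb-trace-Un} are the moment-matching parameters \eqref{eq:moment-matching} with \(\alpha=\nu/2\) and \(\beta=2\mu/\nu\). The mean matches because
\[
  \sum_{k,l}\alpha_{kl}\beta_{kl}=\sum_{k,l}\lambda_{kl}=\sum_k\tr(\Sigma_k)=\mu .
\]
The variance matches because
\[
  \sum_{k,l}\alpha_{kl}\beta_{kl}^2=\sum_{k,l}\frac{n_k}{2}\cdot\frac{4\lambda_{kl}^2}{n_k^2}=2\sum_k n_k^{-1}\tr(\Sigma_k^2),
  \qquad
  \alpha\beta^2=\frac{2\mu^2}{\nu},
\]
and these two are equal exactly by the definition of \(\nu\). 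Thus \(\tau=\mu/\nu\) is the matched half-scale.

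Third, I will substitute into the Section~\ref{sec:satt} bounds. With \(\beta_{kl}/\beta=\tau_{kl}/\tau\), we get \(A_N=2\Delta_n^{(1)}\) and \(B_N=4\Delta_n^{(2)}\). Theorem~\ref{thm:wass_gamma_stein-gamma} then gives \(d_1\le 4\Delta_n^{(1)}\) and \(d_2\le 8\Delta_n^{(2)}+2\mu e_{\nu/2}\Delta_n^{(1)}\), using \(\alpha\beta=\mu\). For \(\nu>2\), i.e.\ \(\alpha>1\), the Kolmogorov bound \eqref{eq:kolmogorov-bound} becomes
\[
  2\left(\frac{2\Delta_n^{(1)}}{(2\mu/\nu)\sqrt{2\pi(\nu/2-1)}\,r_{\nu/2}}\right)^{1/2},
\]
which simplifies to the displayed expression.

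There is no real obstacle. The only point needing care is justifying the distributional identity for the trace of the Wishart sum, namely the rotation invariance and the independence across \(k\). Everything after that is bookkeeping of the factor \(2\) between scale and half-scale.
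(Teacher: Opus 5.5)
Your proposal is correct and follows the paper's own route. The paper also writes $W_n$ as the independent Gamma sum $\sum_{k,l}(\lambda_{kl}/n_k)\xi_{kl}$ with half-scales $\tau_{kl}=\lambda_{kl}/n_k$, and then substitutes into the generic Section~\ref{sec:satt} form of Theorem~\ref{thm:wass_gamma_stein-gamma} using $A_N=2\Delta_n^{(1)}$, $B_N=4\Delta_n^{(2)}$ and $\alpha\beta=\mu$. Your explicit check that $(\nu/2,\,2\mu/\nu)$ solves the moment-matching equations, and your simplification of the Kolmogorov constant, are exactly the bookkeeping the paper leaves implicit.
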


\begin{rem}
\label{rem:linearcomb-trace_rate}
Assume that \(K\) is fixed, \(n_k=c_{k,n}n\) with
\(\sum_{k=1}^K c_{k,n}=1\), and the \(c_{k,n}\) are bounded away from \(0\).
Assume also that \(0<c\le\lambda_{kl}\le C<\infty\) uniformly in
\(k,l,n,d\). Then \(\mu\asymp d\), \(\nu\asymp nd\),
\(\tau_{kl}\asymp n^{-1}\), \(\tau\asymp n^{-1}\),
\(\Delta_n^{(1)}=O(n^{-1})\), and \(\Delta_n^{(2)}=O(n^{-2})\). Since
\(e_{\nu/2}=O((nd)^{-1/2})\),
Corollary~\ref{cor:linearcomb-trace_steinweight} gives
\[
  d_1(W_n,U_n)=O(n^{-1}),\qquad
  d_2(W_n,U_n)=O(n^{-3/2}d^{1/2})+O(n^{-2}),\qquad
  d_{\mathrm{Kol}}(W_n,U_n)=O((nd)^{-1/4}).
\]
If \(d\) is fixed, the Kolmogorov rate reduces to \(O(n^{-1/4})\); if \(d\)
grows with \(n\), the effective degrees of freedom increase accordingly.
\end{rem}

\subsection{High-dimensional repeated-measures designs}
\label{subsec:repeated-anova}

We finally consider an ANOVA-type statistic for high-dimensional repeated
measurements. The distinctive feature of this example is that, in the oracle
case where \(\Sigma\) is known, the quality of the Gamma approximation is
governed by the rank \(q\) of the hypothesis projection rather than directly
by the number of subjects.

Let \(\mathbf H\in\mathbb R^{q\times d}\) be a full-rank hypothesis matrix,
and let \(\bs X_i\sim N_d(\bs\mu,\Sigma)\), \(i=1,\ldots,n\), be independent
repeated-measurement vectors. The null hypothesis is \(H_0:\mathbf H\bs\mu=0_q\),
or equivalently \(H_0:\mathbf T\bs\mu=0_d\), where
\[
  \mathbf T
  =
  \mathbf H'(\mathbf H\mathbf H')^+\mathbf H.
\]
Here \((\cdot)^+\) denotes the Moore--Penrose inverse. The matrix
\(\mathbf T\) is the orthogonal projection onto the hypothesis space
generated by \(\mathbf H\), and \(\mathrm{rk}(\mathbf T)=q\).

When the number of repeated measurements exceeds the number of subjects,
classical Wald-type tests cannot be used because they require inversion of
the covariance matrix. In this setting, \citet{RaufAhmad2008} proposed an
ANOVA-type statistic whose null distribution is approximated by a
Satterthwaite approximation. In the oracle setting where \(\Sigma\) is known,
the statistic is
\begin{equation}
\label{eq:anova-statistic}
  W_n
  =
  \frac{n}{\tr(\mathbf T\Sigma)}
  \overline{\bs X}'\mathbf T\overline{\bs X}.
\end{equation}
Under \(H_0\), \(\overline{\bs X}\sim N_d(0,n^{-1}\Sigma)\), and a spectral
decomposition gives
\[
  W_n
  \cong
  \sum_{j=1}^q
  \frac{\lambda_j}{\tr(\mathbf T\Sigma)}\xi_j,
  \qquad
  \xi_1,\ldots,\xi_q\overset{\mathrm{iid}}{\sim}\chi_1^2,
\]
where \(\lambda_1,\ldots,\lambda_q\) are the non-zero eigenvalues of
\(\Sigma^{1/2}\mathbf T\Sigma^{1/2}\). Since
\(\sum_{j=1}^q\lambda_j=\tr(\mathbf T\Sigma)\) and
\(\sum_{j=1}^q\lambda_j^2=\tr((\mathbf T\Sigma)^2)\), the half-scales are
\(\tau_j:=\lambda_j/\tr(\mathbf T\Sigma)\). Thus \(\mu=1\) and
\begin{equation}
\label{eq:anova-Un}
  \nu
  =
  \frac{
    \tr(\mathbf T\Sigma)^2
  }{
    \tr((\mathbf T\Sigma)^2)
  },
  \qquad
  \tau=\frac1{\nu}.
\end{equation}
The Satterthwaite approximation is \(U_q\sim\mathcal G_{\nu/2,2/\nu}\).

\begin{cor}
\label{cor:anova_steinweight}
Let \(W_n\) be defined in \eqref{eq:anova-statistic}, and let
\(U_q\sim\mathcal G_{\nu/2,2/\nu}\), with \(\nu\) defined in
\eqref{eq:anova-Un}. For \(r=1,2\), set
\[
  \Delta_q^{(r)}
  :=
  \max_{1\le j\le q}
  \left\{
  \left|
  \frac{\lambda_j\tr(\mathbf T\Sigma)}
       {\tr((\mathbf T\Sigma)^2)}
  -1
  \right|
  \left(
  \frac{\lambda_j}{\tr(\mathbf T\Sigma)}
  \right)^r
  \right\}.
\]
Then
\[
  d_1(W_n,U_q)\le 4\Delta_q^{(1)},\qquad
  d_2(W_n,U_q)
  \le
  8\Delta_q^{(2)}
  +
  2e_{\nu/2}\Delta_q^{(1)}.
\]
If \(\nu>2\), then
\[
  d_{\mathrm{Kol}}(W_n,U_q)
  \le
  2
  \left(
  \frac{\nu\Delta_q^{(1)}}
       {\sqrt{2\pi(\nu/2-1)}\,r_{\nu/2}}
  \right)^{1/2}.
\]
\end{cor}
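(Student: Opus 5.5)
The plan is to recognise Corollary~\ref{cor:anova_steinweight} as a direct specialisation of the general Satterthwaite bounds displayed at the start of Section~\ref{sec:satt}, which follow from Theorem~\ref{thm:wass_gamma_stein-gamma}. The first step is distributional. Under \(H_0\) we have \(\sqrt n\,\overline{\bs X}\cong\Sigma^{1/2}Z\) with \(Z\sim N_d(0,I_d)\). Hence
\[
  W_n\cong\frac{Z'\Sigma^{1/2}\mathbf T\Sigma^{1/2}Z}{\tr(\mathbf T\Sigma)}.
\]
The matrix \(\Sigma^{1/2}\mathbf T\Sigma^{1/2}\) is symmetric positive semidefinite of rank \(q\), since \(\mathbf T\) has rank \(q\) and \(\Sigma\) is invertible. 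Diagonalising it orthogonally and using rotation invariance of \(Z\) gives \(W_n\cong\sum_{j=1}^q\tau_j\xi_j\), where \(\tau_j=\lambda_j/\tr(\mathbf T\Sigma)\) and the \(\xi_j\) are independent \(\chi^2_1\) variables. Equivalently, \(Y_j:=\tau_j\xi_j\sim\mathcal G_{1/2,2\tau_j}\) are independent. Since both sides of each bound depend only on laws, we may replace \(W_n\) by this sum.

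Next I identify the moment-matched parameters. In the notation of Theorem~\ref{thm:wass_gamma_stein-gamma}, take \(\alpha_j=1/2\) and \(\beta_j=2\tau_j\). The mean is \(\alpha\beta=\sum_j\tau_j=\sum_j\lambda_j/\tr(\mathbf T\Sigma)=1\). The variance identity gives \(\alpha\beta^2=2\sum_j\tau_j^2=2\tr((\mathbf T\Sigma)^2)/\tr(\mathbf T\Sigma)^2=2/\nu\). Here I use \(\sum_j\lambda_j^2=\tr((\Sigma^{1/2}\mathbf T\Sigma^{1/2})^2)=\tr((\mathbf T\Sigma)^2)\) by cyclicity of the trace. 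It follows that \(\beta=2/\nu\) and \(\alpha=\nu/2\), so \(U_q\sim\mathcal G_{\nu/2,2/\nu}\) is exactly the Satterthwaite Gamma law, with \(\mu=1\) and \(\tau=1/\nu\).

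Finally I compute the discrepancy quantities and substitute. We have \(\beta_j/\beta=\tau_j/\tau=\nu\tau_j=\lambda_j\tr(\mathbf T\Sigma)/\tr((\mathbf T\Sigma)^2)\). Therefore \(A_N=\max_j|\beta_j/\beta-1|\beta_j=2\Delta_q^{(1)}\) and \(B_N=4\Delta_q^{(2)}\).

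Theorem~\ref{thm:wass_gamma_stein-gamma} then gives the three bounds:
\begin{itemize}
\item \(d_1\le 2A_N=4\Delta_q^{(1)}\);
\item \(d_2\le 2B_N+\alpha\beta e_\alpha A_N=8\Delta_q^{(2)}+2e_{\nu/2}\Delta_q^{(1)}\), using \(\alpha\beta=1\);
\item for \(\nu>2\), i.e.\ \(\alpha>1\),
\[
  d_{\mathrm{Kol}}\le 2\Bigl(\frac{2\Delta_q^{(1)}}{(2/\nu)\sqrt{2\pi(\nu/2-1)}\,r_{\nu/2}}\Bigr)^{1/2},
\]
which simplifies to the stated expression.
\end{itemize}

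There is no genuine obstacle here. The only point needing care is the linear-algebra step. One must check that the nonzero spectrum of \(\Sigma^{1/2}\mathbf T\Sigma^{1/2}\) consists of exactly \(q\) positive eigenvalues whose sum and sum of squares are \(\tr(\mathbf T\Sigma)\) and \(\tr((\mathbf T\Sigma)^2)\). This ensures the moment-matched \(\nu\) coincides with \eqref{eq:anova-Un}.
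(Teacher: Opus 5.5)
Your proposal is correct and follows the paper's route exactly. You diagonalise the quadratic form into $\sum_j\tau_j\xi_j$ with $\tau_j=\lambda_j/\tr(\mathbf T\Sigma)$, identify $\alpha=\nu/2$, $\beta=2/\nu$, $\alpha\beta=\mu=1$, $A_N=2\Delta_q^{(1)}$ and $B_N=4\Delta_q^{(2)}$, and substitute into Theorem~\ref{thm:wass_gamma_stein-gamma}. (As in the paper, treating $\overline{\bs X}$ as centred under $H_0$ is harmless, because $\mathbf T\overline{\bs X}=\mathbf T(\overline{\bs X}-\bs\mu)$ when $\mathbf T\bs\mu=0$.)
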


\begin{rem}
\label{rem:anova_rate}
Assume that the non-zero eigenvalues of
\(\Sigma^{1/2}\mathbf T\Sigma^{1/2}\) satisfy
\(0<c\le\lambda_j\le C<\infty\), \(j=1,\ldots,q\). This condition is implied,
for instance, by corresponding uniform eigenvalue bounds on \(\Sigma\), since
\(\mathbf T\) is an orthogonal projection of rank \(q\). Then
\(\tr(\mathbf T\Sigma)\asymp q\), \(\tr((\mathbf T\Sigma)^2)\asymp q\),
\(\nu\asymp q\), \(\Delta_q^{(1)}=O(q^{-1})\), and
\(\Delta_q^{(2)}=O(q^{-2})\). Since \(e_{\nu/2}=O(q^{-1/2})\),
Corollary~\ref{cor:anova_steinweight} gives
\[
  d_1(W_n,U_q)=O(q^{-1}),\qquad
  d_2(W_n,U_q)=O(q^{-3/2}),\qquad
  d_{\mathrm{Kol}}(W_n,U_q)=O(q^{-1/4}).
\]
If \(q=q_n\) grows with \(n\), these rates are read along the chosen
high-dimensional regime. For instance, \(q_n\asymp n\) yields
\[
  d_1(W_n,U_q)=O(n^{-1}),\qquad
  d_2(W_n,U_q)=O(n^{-3/2}),\qquad
  d_{\mathrm{Kol}}(W_n,U_q)=O(n^{-1/4}).
\]
In many high-dimensional repeated-measures regimes, the sample size, the
number of repeated measurements, and the rank of the hypothesis projection
are assumed to grow at comparable rates.
\end{rem}

\section{Outlook}
\label{sec:outlook}

The results of this paper show that the Satterthwaite approximation
can be studied directly on its natural Gamma scale, without passing
through distribution-function expansions or asymptotic
normalizations. The kernel representations developed above make the
relevant Stein factors explicit and lead to bounds whose dependence on
the individual scale parameters is transparent. Several extensions
seem worth pursuing.

First, the non-uniform Stein factors obtained here could be used to
sharpen other Gamma approximation results. In normal approximation,
non-uniform bounds often capture local features that are invisible in
global estimates; see \cite[Chapter~8]{Chen2011}. The Gamma kernels
derived in this paper suggest that an analogous refinement is possible
for positive approximations, especially in regimes where the target
shape parameter is large.

Second, it would be natural to develop higher-order versions of the
present kernel calculus. The bounds of \cite{GauntThesis,Gaunt2017}
indicate that Gamma--Stein estimates should persist at arbitrary
derivative order, but the representations used here are currently
tailored to the first two derivatives needed for moment
matching. Extending the construction to higher orders could provide a
Stein-based treatment of higher-order moment-matching approximations,
including Wood's \(F\)-approximation \cite{Wood1989}.

Third, the present kernels are intrinsically adapted to the positive
half-line.  As discussed in Remark~\ref{rem:comparison-gpr}, this
makes the approach different from whole-line Gamma Stein equations
such as those studied in \cite{Dobler2018}. A natural next step is to
understand whether analogous one-sided kernel representations can be
built for signed targets or for approximations involving both positive
and negative components.

Finally, the method should apply beyond Gamma summands. The proof of
Theorem~\ref{thm:wass_gamma_stein-gamma} separates two ingredients: a
Gamma--Stein transfer identity for the target law, and
integration-by-parts identities for the summands. Replacing the latter
by suitable Stein identities for other positive variables would lead
to Satterthwaite-type bounds for linear combinations of independent
squared variables that are not necessarily chi-square distributed.

\section*{Declaration on AI-assisted language editing.}

The authors used ChatGPT (OpenAI, GPT-5.5 Thinking, accessed in May
2026) for language editing and clarity improvements during manuscript
preparation.  The authors reviewed and edited all AI-assisted output
and take full responsibility for the final content of the manuscript.

\appendix

\section{Proofs from Section \ref{sec:steingamma}}
\label{appendix:steinstuff}

\begin{proof}[Proof of Lemma \ref{lem:gamma-mills-ratios}]
By scaling, it is enough to prove the result for \(b=1\). 
For \(y>0\),
we get 
\[
  \underline M_{a,1}(y)
  =
  \int_0^1 t^{a-1}e^{y(1-t)}\,dt.
\]
Since the integration range is bounded, differentiation under the integral
sign is immediate. Hence, for every \(n\in\mathbb N\) and every \(y>0\),
\[
  \underline M_{a,1}^{(n)}(y)
  =
  \int_0^1 (1-t)^n t^{a-1}e^{y(1-t)}\,dt
  \ge 0.
\]
Thus \(y\mapsto \underline M_{a,1}(y)\) is absolutely monotone on
\((0,\infty)\).

Similarly, with the change of variables \(u=yt\), we obtain
\[
  \overline M_{a,1}(y)
  =
  \int_1^\infty t^{a-1}e^{-y(t-1)}\,dt.
\]
For every compact interval \(I\subset(0,\infty)\) and every \(n\in\mathbb N\),
the function
\[
  t\mapsto (t-1)^n t^{a-1} e^{-\eta(t-1)},
  \qquad \eta:=\inf I>0,
\]
is integrable on \((1,\infty)\). Hence differentiation under the integral
sign is justified locally uniformly in \(y\in I\). It follows that, for every
\(n\in\mathbb N\) and every \(y>0\),
\[
  \overline M_{a,1}^{(n)}(y)
  =
  (-1)^n
  \int_1^\infty (t-1)^n t^{a-1}e^{-y(t-1)}\,dt.
\]
Consequently, \(y\mapsto \overline M_{a,1}(y)\) is completely monotone on
\((0,\infty)\).  
\end{proof}

\begin{proof}[Proof of Lemma \ref{lma:boundskern}]
Recall that
\[
  \underline{\mathrm P}^{1}_{a,b}(y)
  =
  \int_0^y P_{a,b}(u)\,du,
  \qquad
  \overline{\mathrm P}^{1}_{a,b}(y)
  =
  \int_y^\infty \overline P_{a,b}(u)\,du .
\]
We shall use
\begin{equation}
\label{eq:P1-identities}
  \underline{\mathrm P}^{1}_{a,b}(y)
  =
  (y-ab)P_{a,b}(y)+by\,p_{a,b}(y),
  \qquad
  \overline{\mathrm P}^{1}_{a,b}(y)
  =
  (ab-y)\overline P_{a,b}(y)+by\,p_{a,b}(y),
\end{equation}
which follows by integration by parts from
\[
  (bxp_{a,b}(x))'=(ab-x)p_{a,b}(x).
\]

Since the two pieces in \(K^{0,1}_{a,b}\) have disjoint supports and
\(\underline M_{a,b},\overline M_{a,b}\ge0\),
\[
  \mathbb E[|K^{0,1}_{a,b}(X,y)|]
  =
  \overline M_{a,b}(y)\underline{\mathrm P}^{1}_{a,b}(y)
  +
  \underline M_{a,b}(y)\overline{\mathrm P}^{1}_{a,b}(y).
\]
Using \eqref{eq:P1-identities}, we obtain
\[
  \mathbb E[|K^{0,1}_{a,b}(X,y)|]
  =
  \overline P_{a,b}(y)+P_{a,b}(y)=1.
\]

Then, using the signs from Lemma~\ref{lem:gamma-mills-ratios},
\[
  \mathbb E[|K^{1,1}_{a,b}(X,y)|]
  =
  -\overline M_{a,b}'(y)\underline{\mathrm P}^{1}_{a,b}(y)
  +
  \underline M_{a,b}'(y)\overline{\mathrm P}^{1}_{a,b}(y) = C^1_{a,b}(y).
\]

Introduce the second iterated tails
\begin{equation}
    I_{a,b}(y)
  :=
  \int_0^y \underline{\mathrm P}^{1}_{a,b}(x)\,dx,
  \qquad
  J_{a,b}(y)
  :=
  \int_y^\infty \overline{\mathrm P}^{1}_{a,b}(x)\,dx. \label{eq:ij}
\end{equation}
A second integration by parts yields
\begin{equation}
\label{eq:IJ-identities}
  I_{a,b}(y)
  =
  \frac12
  \left\{
    (y-ab-b)\underline{\mathrm P}^{1}_{a,b}(y)
    +
    byP_{a,b}(y)
  \right\},
\end{equation}
and
\begin{equation}
\label{eq:IJ-identities-survival}
  J_{a,b}(y)
  =
  \frac12
  \left\{
    (ab+b-y)\overline{\mathrm P}^{1}_{a,b}(y)
    +
    by\overline P_{a,b}(y)
  \right\}.
\end{equation}
Now, direct differentiation gives
\begin{equation}
\label{eq:Mprime-identities}
  \underline M_{a,b}'(y)
  =
  \frac{\underline{\mathrm P}^{1}_{a,b}(y)}
       {b^2y^2p_{a,b}(y)},
  \qquad
  \overline M_{a,b}'(y)
  =
  -\frac{\overline{\mathrm P}^{1}_{a,b}(y)}
        {b^2y^2p_{a,b}(y)}.
\end{equation}
Differentiating one more time, and using
\eqref{eq:IJ-identities}--\eqref{eq:IJ-identities-survival}, gives
\begin{equation}
\label{eq:Msecond-identities}
  \underline M_{a,b}''(y)
  =
  \frac{2I_{a,b}(y)}{b^3y^3p_{a,b}(y)},
  \qquad
  \overline M_{a,b}''(y)
  =
  \frac{2J_{a,b}(y)}{b^3y^3p_{a,b}(y)}.
\end{equation}
Hence by \eqref{eq:IJ-identities},  \eqref{eq:IJ-identities-survival} and \eqref{eq:Msecond-identities}, 
\begin{align*}
     \mathbb E[|K^{2,1}_{a,b}(X,y)|]
  & = 
  \overline M_{a,b}''(y)\underline{\mathrm P}^{1}_{a,b}(y)
  +
  \underline M_{a,b}''(y)\overline{\mathrm P}^{1}_{a,b}(y) 
  =
  \frac1{by}
\end{align*}

For \(K^{2,2}_{a,b}\), since
\(\underline M_{a,b}''(y),\overline M_{a,b}''(y)\ge0\),
\[
  \mathbb E[|K^{2,2}_{a,b}(X,y)|]
  =
  \overline M_{a,b}''(y)
  \int_0^y \underline{\mathrm P}^{1}_{a,b}(x)\,dx
  +
  \underline M_{a,b}''(y)
  \int_y^\infty \overline{\mathrm P}^{1}_{a,b}(x)\,dx
  =
  C^2_{a,b}(y).
\]

Finally, differentiating \eqref{eq:Msecond-identities} gives
\[
  \underline M_{a,b}'''(y)
  =
  \frac{2}{b^3y^3p_{a,b}(y)}
  \left\{
    \underline{\mathrm P}^{1}_{a,b}(y)
    -
    \left(\frac{a+2}{y}-\frac1b\right)I_{a,b}(y)
  \right\},
\]
and
\[
  \overline M_{a,b}'''(y)
  =
  \frac{2}{b^3y^3p_{a,b}(y)}
  \left\{
    -\overline{\mathrm P}^{1}_{a,b}(y)
    -
    \left(\frac{a+2}{y}-\frac1b\right)J_{a,b}(y)
  \right\}.
\]
Using again the signs from Lemma~\ref{lem:gamma-mills-ratios},
\[
  \mathbb E[|K^{3,2}_{a,b}(X,y)|]
  =
  -\overline M_{a,b}'''(y)I_{a,b}(y)
  +
  \underline M_{a,b}'''(y)J_{a,b}(y) =
  \frac1{by},
\]
by the identity already proved above. This completes the proof.
\end{proof}

\begin{proof}[Proof of Lemma \ref{lem:solution}]
We first prove the assertions for smooth compactly supported \(h\); the
general case follows by approximation, using the bounds obtained below.

Expanding the kernel in \eqref{eq:fh-explicit}, we get, for every \(x>0\),
\begin{equation}
\label{eq:fh-proof-start}
  f_{h;a,b}(x)
  =
  -\frac{\overline P_{a,b}(x)}{bxp_{a,b}(x)}
  \int_0^x P_{a,b}(t)h'(t)\,dt
  -
  \frac{P_{a,b}(x)}{bxp_{a,b}(x)}
  \int_x^\infty \overline P_{a,b}(t)h'(t)\,dt .
\end{equation}
For brevity, set
\[
  \underline{\Phi}_h(x):=\int_0^x P_{a,b}(t)h'(t)\,dt,
  \qquad
  \overline{\Phi}_h(x):=\int_x^\infty \overline P_{a,b}(t)h'(t)\,dt .
\]
Then
\[
  f_{h;a,b}(x)
  =
  -\overline M_{a,b}(x)\underline\Phi_h(x)
  -
  \underline M_{a,b}(x)\overline{\Phi}_h(x).
\]
Since
\[
  \underline\Phi_h'(x)=P_{a,b}(x)h'(x),
  \qquad
  \overline{\Phi}_h'(x)=-\overline P_{a,b}(x)h'(x),
\]
differentiating gives
\[
\begin{aligned}
  f_{h;a,b}'(x)
  &=
  -\overline M_{a,b}'(x)\underline\Phi_h(x)
  -
  \underline M_{a,b}'(x)\overline{\Phi}_h(x)  \\
  &\quad
  -
  \overline M_{a,b}(x)P_{a,b}(x)h'(x)
  +
  \underline M_{a,b}(x)\overline P_{a,b}(x)h'(x).
\end{aligned}
\]
The last two terms cancel, because
\[
  \overline M_{a,b}(x)P_{a,b}(x)
  =
  \underline M_{a,b}(x)\overline P_{a,b}(x)
  =
  \frac{P_{a,b}(x)\overline P_{a,b}(x)}
       {bxp_{a,b}(x)} .
\]
Therefore
\[
  f_{h;a,b}'(x)
  =
  -\overline M_{a,b}'(x)\underline\Phi_h(x)
  -
  \underline M_{a,b}'(x)\overline{\Phi}_h(x)
  =
  \mathbb E\left[
    h'(X)K_{a,b}^{1,1}(X,x)
  \right]
  =
  g_{h;a,b}(x).
\]

We now check the Stein equation. By integration by parts,
\[
  \underline\Phi_h(x)
  =
  P_{a,b}(x)h(x)-\int_0^x h(t)p_{a,b}(t)\,dt,
\]
and, since \(h\) is compactly supported,
\[
  \overline{\Phi}_h(x)
  =
  -\overline P_{a,b}(x)h(x)
  +
  \int_x^\infty h(t)p_{a,b}(t)\,dt .
\]
It follows that
\[
  \overline P_{a,b}(x)\underline\Phi_h(x)
  +
  P_{a,b}(x)\overline{\Phi}_h(x)
  =
  P_{a,b}(x)\mathcal G_{a,b}h
  -
  \int_0^x h(t)p_{a,b}(t)\,dt .
\]
Hence \eqref{eq:fh-proof-start} is equivalently
\[
  f_{h;a,b}(x)
  =
  \frac{1}{bxp_{a,b}(x)}
  \int_0^x
  \left\{h(t)-\mathcal G_{a,b}h\right\}
  p_{a,b}(t)\,dt .
\]
Using
\[
  (bxp_{a,b}(x))'=(ab-x)p_{a,b}(x),
\]
we obtain
\[
  bxf_{h;a,b}'(x)-(x-ab)f_{h;a,b}(x)
  =
  h(x)-\mathcal G_{a,b}h.
\]
Thus \(f_{h;a,b}\) solves the Gamma--Stein equation.

Next, differentiating the expression
\[
  g_{h;a,b}(x)
  =
  -\overline M_{a,b}'(x)\underline\Phi_h(x)
  -
  \underline M_{a,b}'(x)\overline{\Phi}_h(x)
\]
gives
\[
\begin{aligned}
  g_{h;a,b}'(x)
  &=
  -\overline M_{a,b}''(x)\underline\Phi_h(x)
  -
  \underline M_{a,b}''(x)\overline{\Phi}_h(x) \\
  &\quad
  -
  \overline M_{a,b}'(x)P_{a,b}(x)h'(x)
  +
  \underline M_{a,b}'(x)\overline P_{a,b}(x)h'(x).
\end{aligned}
\]
The first line is
\[
  \mathbb E\left[
    h'(X)K_{a,b}^{2,1}(X,x)
  \right].
\]
For the diagonal contribution, use
\[
  \underline M_{a,b}'(x)
  =
  \frac{\underline{\mathrm P}_{a,b}^{1}(x)}
       {b^2x^2p_{a,b}(x)},
  \qquad
  \overline M_{a,b}'(x)
  =
  -\frac{\overline{\mathrm P}_{a,b}^{1}(x)}
        {b^2x^2p_{a,b}(x)}.
\]
Then
\[
\begin{aligned}
  &-\overline M_{a,b}'(x)P_{a,b}(x)
  +
  \underline M_{a,b}'(x)\overline P_{a,b}(x) \\
  &\quad =
  \frac{
    P_{a,b}(x)\overline{\mathrm P}_{a,b}^{1}(x)
    +
    \overline P_{a,b}(x)\underline{\mathrm P}_{a,b}^{1}(x)}
       {b^2x^2p_{a,b}(x)}.
\end{aligned}
\]
By \eqref{eq:P1-identities},
\[
  P_{a,b}(x)\overline{\mathrm P}_{a,b}^{1}(x)
  +
  \overline P_{a,b}(x)\underline{\mathrm P}_{a,b}^{1}(x)
  =
  bxp_{a,b}(x).
\]
Therefore
\[
  -\overline M_{a,b}'(x)P_{a,b}(x)
  +
  \underline M_{a,b}'(x)\overline P_{a,b}(x)
  =
  \frac1{bx}.
\]
Consequently,
\[
  g_{h;a,b}'(x)
  =
  \mathbb E\left[
    h'(X)K_{a,b}^{2,1}(X,x)
  \right]
  +
  \frac{h'(x)}{bx}
  =
  \ell_{h;a,b}(x).
\]

Assume now that \(h\in\mathcal Z_2\). From the preceding identity,
\[
  \ell_{h;a,b}(x)
  =
  -\overline M_{a,b}''(x)\underline\Phi_h(x)
  -
  \underline M_{a,b}''(x)\overline{\Phi}_h(x)
  +
  \frac{h'(x)}{bx}.
\]
We integrate by parts in the representation of
\(\widetilde\ell_{h;a,b}\). Since \(h\) is compactly supported,
\[
  \int_0^x \underline{\mathrm P}_{a,b}^{1}(t)h''(t)\,dt
  =
  \underline{\mathrm P}_{a,b}^{1}(x)h'(x)
  -
  \int_0^x P_{a,b}(t)h'(t)\,dt
\]
and
\[
  \int_x^\infty \overline{\mathrm P}_{a,b}^{1}(t)h''(t)\,dt
  =
  -\overline{\mathrm P}_{a,b}^{1}(x)h'(x)
  +
  \int_x^\infty \overline P_{a,b}(t)h'(t)\,dt .
\]
Therefore, using \eqref{eq:kernel-22},
\[
\begin{aligned}
  \widetilde\ell_{h;a,b}(x)
  &=
  \mathbb E\left[
    h''(X)K_{a,b}^{2,2}(X,x)
  \right] \\
  &=
  -\overline M_{a,b}''(x)\underline\Phi_h(x)
  -
  \underline M_{a,b}''(x)\overline{\Phi}_h(x) \\
  &\quad
  +
  h'(x)
  \left\{
    \overline M_{a,b}''(x)\underline{\mathrm P}_{a,b}^{1}(x)
    +
    \underline M_{a,b}''(x)\overline{\mathrm P}_{a,b}^{1}(x)
  \right\}.
\end{aligned}
\]
By Lemma~\ref{lma:boundskern}, or directly from
\eqref{eq:Msecond-identities} and \eqref{eq:P1-identities},
\[
  \overline M_{a,b}''(x)\underline{\mathrm P}_{a,b}^{1}(x)
  +
  \underline M_{a,b}''(x)\overline{\mathrm P}_{a,b}^{1}(x)
  =
  \frac1{bx}.
\]
Hence
\[
  \widetilde\ell_{h;a,b}(x)
  =
  \ell_{h;a,b}(x).
\]

It remains to differentiate this second-order representation. Write
\[
  \underline\Phi^1_h(x)
  :=
  \int_0^x \underline{\mathrm P}_{a,b}^{1}(t)h''(t)\,dt,
  \qquad
  \overline{\Phi}^1_h(x)
  :=
  \int_x^\infty \overline{\mathrm P}_{a,b}^{1}(t)h''(t)\,dt .
\]
Then
\[
  \ell_{h;a,b}(x)
  =
  \overline M_{a,b}''(x)\underline\Phi^1_h(x)
  -
  \underline M_{a,b}''(x)\overline{\Phi}^1_h(x).
\]
Since
\[
 {\underline\Phi^1_h}'(x)=\underline{\mathrm P}_{a,b}^{1}(x)h''(x),
  \qquad
 {\overline{\Phi}^1_h}'(x)=-\overline{\mathrm P}_{a,b}^{1}(x)h''(x),
\]
we get
\[
\begin{aligned}
  \ell_{h;a,b}'(x)
  &=
  \overline M_{a,b}'''(x)\underline\Phi^1_h(x)
  -
  \underline M_{a,b}'''(x)\overline{\Phi}^1_h(x) \\
  &\quad
  +
  h''(x)
  \left\{
    \overline M_{a,b}''(x)\underline{\mathrm P}_{a,b}^{1}(x)
    +
    \underline M_{a,b}''(x)\overline{\mathrm P}_{a,b}^{1}(x)
  \right\}.
\end{aligned}
\]
The first line is
\[
  \mathbb E\left[
    h''(X)K_{a,b}^{3,2}(X,x)
  \right],
\]
and the factor in braces is again \(1/(bx)\). Thus
\[
  \ell_{h;a,b}'(x)
  =
  \mathbb E\left[
    h''(X)K_{a,b}^{3,2}(X,x)
  \right]
  +
  \frac{h''(x)}{bx}
  =
  m_{h;a,b}(x).
\]

We have proved the differentiability statements and the identities for the
pointwise representatives. It remains to prove the boundedness assertions.
If \(h\in\mathcal Z_1\), then \(\|h'\|_\infty\le1\), and Lemma
\ref{lma:boundskern} gives
\[
  |f_{h;a,b}(x)|
  \le
  \mathbb E[|K_{a,b}^{0,1}(X,x)|]
  =
  1,
\]
\[
  |g_{h;a,b}(x)|
  \le
  \mathbb E[|K_{a,b}^{1,1}(X,x)|]
  =
  C_{a,b}^{1}(x),
\]
and
\[
  |\ell_{h;a,b}(x)|
  \le
  \mathbb E[|K_{a,b}^{2,1}(X,x)|]
  +
  \frac1{bx}
  =
  \frac2{bx}.
\]
Using the scaling relation \eqref{eq:M-scaling} and
Proposition~\ref{prop:Ca-bounds}, it follows that
\(f_{h;a,b}\), \(g_{h;a,b}\), \(xg_{h;a,b}\), and \(x\ell_{h;a,b}\) are
bounded.

If \(h\in\mathcal Z_2\), then \(\|h''\|_\infty\le1\), and the representation
\(\ell_{h;a,b}=\widetilde\ell_{h;a,b}\) gives
\[
  |\ell_{h;a,b}(x)|
  \le
  \mathbb E[|K_{a,b}^{2,2}(X,x)|]
  =
  C_{a,b}^{2}(x).
\]
Moreover,
\[
  |m_{h;a,b}(x)|
  \le
  \mathbb E[|K_{a,b}^{3,2}(X,x)|]
  +
  \frac1{bx}
  =
  \frac2{bx}.
\]
Again by \eqref{eq:M-scaling} and Proposition~\ref{prop:Ca-bounds},
\(\ell_{h;a,b}\) and \(xm_{h;a,b}\) are bounded.

Finally, the extension from smooth compactly supported \(h\) to
\(\mathcal Z_1\) and \(\mathcal Z_2\) follows by mollification and
truncation, using the same bounds and dominated convergence.
\end{proof}
\begin{prop}
\label{prop:Ca-bounds}
For every \(a>0\), the functions
\[
  z\mapsto C_{a,1}^{1}(z),
  \qquad
  z\mapsto zC_{a,1}^{1}(z),
  \qquad
  z\mapsto C_{a,1}^{2}(z)
\]
are bounded on \((0,\infty)\). Consequently,
\begin{equation}
\label{eq:C-D-E-def}
  c_a:=\sup_{z>0}C_{a,1}^{1}(z),
  \qquad
  d_a:=\sup_{z>0}zC_{a,1}^{1}(z),
  \qquad
  e_a:=\sup_{z>0}C_{a,1}^{2}(z)
\end{equation}
are finite. Moreover, as \(a\to\infty\),
\[
   \sqrt a\,c_a\to\sqrt{\frac{2}{\pi}},
  \qquad
  \frac{d_a}{\sqrt a}\to\sqrt{\frac{2}{\pi}},
  \qquad
  \sqrt a\,e_a\to\sqrt{\frac{\pi}{8}}.
\]
\end{prop}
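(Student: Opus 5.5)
Set \(b=1\) and write \(p=p_{a,1}\), \(P=P_{a,1}\), \(\overline P=\overline P_{a,1}\). The plan is to turn both envelopes into explicit expressions in the Gamma density and distribution function, then handle three regimes: \(z\to0\), \(z\to\infty\), and the window \(z=a+O(\sqrt a)\) that produces the large-\(a\) limits.

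\textbf{Closed forms.} Substitute \eqref{eq:Mprime-identities} into \eqref{eq:M11-def}:
\[
  C^1_{a,1}(z)=\frac{2\,\underline{\mathrm P}^1(z)\,\overline{\mathrm P}^1(z)}{z^2p(z)} .
\]
Substitute \eqref{eq:Msecond-identities} into \eqref{eq:M22-def}, with \(I,J\) as in \eqref{eq:ij}:
\[
  C^2_{a,1}(z)=\frac{4I(z)J(z)}{z^3p(z)} .
\]
Then insert \eqref{eq:P1-identities}, \eqref{eq:IJ-identities} and \eqref{eq:IJ-identities-survival}. Both envelopes become explicit combinations of \(P\), \(\overline P\) and \(zp\).

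\textbf{Boundedness.} The integral representations from the proof of Lemma~\ref{lem:gamma-mills-ratios} give expansions at both ends.
\begin{itemize}
\item As \(z\to0\): \(\underline{\mathrm P}^1(z)\sim z^{a+1}/(a(a+1)\Gamma(a))\), while \(\overline{\mathrm P}^1(z)\to a\) and \(p(z)\sim z^{a-1}/\Gamma(a)\). Hence \(C^1_{a,1}(z)\to 2/(a+1)\) and \(zC^1_{a,1}(z)\to0\). Similarly \(I(z)=O(z^{a+2})\) and \(J\to J(0)<\infty\), so \(C^2_{a,1}\) stays bounded.
\item As \(z\to\infty\): \(\underline{\mathrm P}^1(z)\sim z\) and \(\overline{\mathrm P}^1(z)\sim z^2p(z)\bigl(1+O(1/z)\bigr)\). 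Hence \(C^1_{a,1}(z)=O(1/z)\), which makes \(zC^1_{a,1}\) bounded. The analogous estimates \(I\sim z^2/2\) and \(J=O(z^3p)\) give \(C^2_{a,1}=O(1/z)\).
\item Continuity on compact subintervals then yields finiteness of \(c_a,d_a,e_a\).
\end{itemize}
For \(\overline{\mathrm P}^1\), the cleanest route is the representation
\[
  \overline{\mathrm P}^1(z)=z^2p(z)\int_1^\infty (t-1)\,t^{a-1}e^{-z(t-1)}\,dt ,
\]
which is exactly \(-z^2p\,\overline M'\) from the lemma.

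\textbf{Asymptotics.} Put \(z=a+x\sqrt a\). By the CLT for Gamma variables, with \(\Phi,\varphi\) the standard normal distribution function and density:
\begin{itemize}
\item \(P(z)\to\Phi(x)\) and \(\sqrt a\,p(z)\to\varphi(x)\).
\item \(\underline{\mathrm P}^1(z)=\sqrt a\,\bigl(x\Phi(x)+\varphi(x)\bigr)+o(\sqrt a)\), the normal "stop-loss" function, and similarly \(\overline{\mathrm P}^1(z)=\sqrt a\,\bigl(\varphi(x)-x\overline\Phi(x)\bigr)+o(\sqrt a)\), using \((z-a)=x\sqrt a\) and \(zp(z)\approx\sqrt a\,\varphi(x)\).
\end{itemize}
Consequently
\[
  \sqrt a\,C^1_{a,1}(z)\to \frac{2\,(x\Phi+\varphi)(\varphi-x\overline\Phi)}{\varphi}=:\kappa_1(x),
\]
and \(\kappa_1(0)=2\varphi(0)=\sqrt{2/\pi}\). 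Since \(z/a\to1\), \(zC^1_{a,1}/\sqrt a\) has the same limit. For the second envelope, \(I\) and \(J\) are of order \(a\) and \(z^3p\approx a^{5/2}\varphi\), with the terms \(a^{-1/2}\) corrections cancelling. This gives
\[
  \sqrt a\,C^2_{a,1}\to \frac{4\,\tilde I(x)\,\tilde J(x)}{\varphi(x)},
\]
where \(\tilde I,\tilde J\) are the second normal stop-loss integrals. At \(x=0\), \(\tilde I(0)=\tilde J(0)=1/4\) and the value is \(4\cdot\tfrac1{16}\sqrt{2\pi}=\sqrt{\pi/8}\). To get the stated suprema, one must show that \(\kappa_1\) and \(\kappa_2\) are maximized at \(x=0\); both are even, and log-concavity arguments should give this.

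\textbf{Main obstacle.} The hard step is proving that the supremum over \(z\) is asymptotically attained in the \(O(\sqrt a)\) window, i.e.\ excluding larger contributions from \(|z-a|\gg\sqrt a\). I would do this first with uniform bounds: Chernoff-type tail estimates and the Mills-ratio-type bounds from the integral representations, e.g.\ \(\overline M(z)\le 1/(z-a+1)\) for \(z>a-1\). These should show that \(\sqrt a\,C^j\) is at most \(\sup\kappa_j+o(1)\) outside the window. In addition, the local CLT must be uniform for \(|x|\le A\), with dominated convergence for the integrated quantities.
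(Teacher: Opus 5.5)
Your plan follows the paper's proof closely. It uses the same closed forms $C^1_{a,1}=2\underline{\mathrm P}^1\overline{\mathrm P}^1/(z^2p)$ and $C^2_{a,1}=4IJ/(z^3p)$, and the same endpoint limits plus continuity for finiteness. The window $z=a+x\sqrt a$ gives limit profiles $\kappa_1,\kappa_2$ identical to the paper's $K_1,K_2$, and Mills-type tail estimates keep the maximizers inside the window. Getting the window limits from \eqref{eq:P1-identities} and \eqref{eq:IJ-identities}--\eqref{eq:IJ-identities-survival}, via the CLT for $P$ and the local CLT for $p$, is if anything slightly cleaner than the paper's uniform-integrability argument. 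Two steps need correcting, however.

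\textbf{Large-$z$ asymptotics.} These are wrong as written. If $\overline{\mathrm P}^1(z)\sim z^2p(z)$ and $J=O(z^3p)$ held, you would get $C^1_{a,1}(z)\sim 2z$ and $C^2_{a,1}=O(z^2)$, i.e.\ unbounded envelopes. So your conclusions $C^j_{a,1}=O(1/z)$ do not follow from what you wrote. In your own representation, $t^{a-1}\to1$ near $t=1$ gives $\int_1^\infty (t-1)t^{a-1}e^{-z(t-1)}\,dt\sim\int_0^\infty se^{-zs}\,ds=z^{-2}$. Hence $\overline{\mathrm P}^1(z)\sim p(z)$, and then $J(z)=\int_z^\infty\overline{\mathrm P}^1(u)\,du\sim\overline P(z)\sim p(z)$. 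Combined with $\underline{\mathrm P}^1\sim z$ and $I\sim z^2/2$, this gives $C^1_{a,1}\sim2/z$, $zC^1_{a,1}\to2$ and $C^2_{a,1}\sim2/z$, exactly as in the paper.

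\textbf{Locating the maximum at $0$.} Log-concavity will not do this. Both profiles decay like $\kappa_1(x)\sim\kappa_2(x)\sim2/|x|$ as $|x|\to\infty$, so neither is log-concave. Concretely, with $\psi(x):=\log\E[(Z-x)_+]$, which is concave, one has $\log\kappa_1(x)=\psi(x)+\psi(-x)+x^2/2+\mathrm{const}$, and the convex term $x^2/2$ wins at infinity. What is actually needed is a sign computation. For $\kappa_1$ you must show
\[
(\log\kappa_1)'(x)=\frac{\Phi(x)}{x\Phi(x)+\varphi(x)}-\frac{\overline\Phi(x)}{\varphi(x)-x\overline\Phi(x)}+x<0
\]
for $x>0$. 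This is a Mills-ratio inequality: it holds numerically and behaves like $-1/x$ at infinity. The analogous inequality with second stop-loss moments is needed for $\kappa_2$. The paper only asserts this ``by direct differentiation''. It likewise only sketches the tail-exclusion step you identify as the main obstacle. On those two points your proposal is at the same level of completeness as the paper's argument.
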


\begin{proof}
Throughout the proof we take \(b=1\). Recall the notation
\eqref{eq:ij}. From \eqref{eq:Mprime-identities} and
\eqref{eq:Msecond-identities},
\[
  C_{a,1}^{1}(x)
  =
  2\frac{\underline{\mathrm P}^{1}_{a,1}(x)
  \overline{\mathrm P}^{1}_{a,1}(x)}{x^2p_{a,1}(x)},
  \qquad
  C_{a,1}^{2}(x)
  =
4  \frac{I_{a,1}(x)J_{a,1}(x)}{x^3p_{a,1}(x)}.
\]
If \(X\sim\mathcal G_{a,1}\), then, by Fubini,
\[
  \underline{\mathrm P}^{1}_{a,1}(x)=\E[(x-X)_+],
  \quad
  \overline{\mathrm P}^{1}_{a,1}(x)=\E[(X-x)_+],
  \quad
  I_{a,1}(x)=\frac12\E[(x-X)_+^2],
  \quad
  J_{a,1}(x)=\frac12\E[(X-x)_+^2].
\]
As \(x\downarrow0\), using
\(p_{a,1}(x)\sim x^{a-1}/\Gamma(a)\) and
\(P_{a,1}(x)\sim x^a/\Gamma(a+1)\), we get
\[
  \underline{\mathrm P}^{1}_{a,1}(x)
  \sim \frac{x^{a+1}}{\Gamma(a+2)},
  \quad
  \overline{\mathrm P}^{1}_{a,1}(x)\to a,
  \quad
  I_{a,1}(x)\sim \frac{x^{a+2}}{(a+2)\Gamma(a+2)},
  \quad
  J_{a,1}(x)\to \frac{a(a+1)}2.
\]
Thus
\[
  C_{a,1}^{1}(x)\to \frac2{a+1},
  \qquad
  xC_{a,1}^{1}(x)\to0,
  \qquad
  C_{a,1}^{2}(x)\to \frac2{a+2}.
\]
As \(x\to\infty\), \(\underline{\mathrm P}^{1}_{a,1}(x)-
\overline{\mathrm P}^{1}_{a,1}(x)=x-a\), while the standard
incomplete-gamma expansion gives
\(\overline P_{a,1}(x)\sim p_{a,1}(x)\) and
\(\overline{\mathrm P}^{1}_{a,1}(x)\sim p_{a,1}(x)\). Hence
\[
  C_{a,1}^{1}(x)\sim \frac2x,
  \qquad
  xC_{a,1}^{1}(x)\to2.
\]
Moreover \(I_{a,1}(x)\sim x^2/2\) and \(J_{a,1}(x)\sim p_{a,1}(x)\), so
\(C_{a,1}^{2}(x)\sim2/x\to0\). By continuity and the finite one-sided
limits just computed, the three functions \(C_{a,1}^{1}\),
\(xC_{a,1}^{1}\), and \(C_{a,1}^{2}\) are bounded on \((0,\infty)\), and
therefore \(c_a,d_a,e_a<\infty\).

We turn to the large-\(a\) behaviour. Put
\(x=a+t\sqrt a\) and \(Y_a=(X-a)/\sqrt a\). The local central limit theorem
gives, locally uniformly in \(t\),
\[
  \sqrt a\,p_{a,1}(a+t\sqrt a)\to\varphi(t).
\]
Since \(\sup_{a\ge1}\E[Y_a^4]<\infty\), the family
\((Y_a^2)_{a\ge1}\) is uniformly integrable, and hence, also locally
uniformly in \(t\),
\[
  \frac{\underline{\mathrm P}^{1}_{a,1}(a+t\sqrt a)}{\sqrt a}
  \to \E[(t-Z)_+]=t\Phi(t)+\varphi(t),
  \qquad
  \frac{\overline{\mathrm P}^{1}_{a,1}(a+t\sqrt a)}{\sqrt a}
  \to \E[(Z-t)_+]=\varphi(t)-t\overline\Phi(t),
\]
where \(Z\sim N(0,1)\). Consequently,
\[
  \sqrt a\,C_{a,1}^{1}(a+t\sqrt a)\to K_1(t),
  \qquad
  \frac{(a+t\sqrt a)C_{a,1}^{1}(a+t\sqrt a)}{\sqrt a}\to K_1(t),
\]
locally uniformly, with
\[
  K_1(t)=
  2\frac{\E[(t-Z)_+]\E[(Z-t)_+]}{\varphi(t)}.
\]
Similarly,
\[
  \frac{I_{a,1}(a+t\sqrt a)}{a}\to\frac12\E[(t-Z)_+^2],
  \qquad
  \frac{J_{a,1}(a+t\sqrt a)}{a}\to\frac12\E[(Z-t)_+^2],
\]
locally uniformly, and therefore
\[
  \sqrt a\,C_{a,1}^{2}(a+t\sqrt a)\to K_2(t),
  \qquad
  K_2(t)=
  \frac{\E[(t-Z)_+^2]\E[(Z-t)_+^2]}{\varphi(t)}.
\]

It remains only to pass from local convergence to convergence of the
suprema. We use the standard Gamma Mills estimates, obtained by integration
by parts at the endpoint. They imply that, for some universal constant
\(C\),
\[
\limsup_{a\to\infty}\sup_{|x-a|\ge R\sqrt a}
\max\left\{
\sqrt a\,C_{a,1}^{1}(x),
\frac{xC_{a,1}^{1}(x)}{\sqrt a},
\sqrt a\,C_{a,1}^{2}(x)
\right\}
\le C(R^{-1}+R^{-3}).
\]
Indeed, the smaller one-sided first and second truncated moments are
\(O(p_{a,1}(x)x^2/|x-a|^2)\) and
\(O(p_{a,1}(x)x^3/|x-a|^3)\), respectively. Thus no maximizer can escape the
window \(x=a+O(\sqrt a)\). Since also \(K_1(t),K_2(t)\to0\) as
\(|t|\to\infty\), the locally uniform convergence above yields
\[
  \sqrt a\,c_a\to \sup_{t\in\R}K_1(t),
  \qquad
  \frac{d_a}{\sqrt a}\to \sup_{t\in\R}K_1(t),
  \qquad
  \sqrt a\,e_a\to \sup_{t\in\R}K_2(t).
\]
A direct differentiation, using
\(\E[(t-Z)_+]=t\Phi(t)+\varphi(t)\) and
\(\E[(Z-t)_+]=\varphi(t)-t\overline\Phi(t)\), shows that \(K_1\) is even and
strictly decreasing on \((0,\infty)\). The analogous computation with the
second truncated moments gives the same conclusion for \(K_2\). Hence both
are maximized at \(0\). Since
\[
  K_1(0)=\sqrt{\frac2\pi},
  \qquad
  K_2(0)=\sqrt{\frac{\pi}{8}},
\]
the announced limits follow.
\end{proof}

\begin{prop}
\label{prop:ea-explicit}
For every \(a\ge16\),
\[
  e_a\le \frac{5}{2\sqrt a}.
\]
\end{prop}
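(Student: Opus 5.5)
We need a bound on \(e_a=\sup_{z>0}C^2_{a,1}(z)\), where
\[
  C^2_{a,1}(x)=\frac{4I(x)J(x)}{x^3p(x)},\qquad
  I(x)=\tfrac12\E[(x-X)_+^2],\qquad J(x)=\tfrac12\E[(X-x)_+^2],
\]
with \(X\sim\mathcal G_{a,1}\) and \(p=p_{a,1}\). The plan is to split \((0,\infty)\) according to whether \(x\) is below or above the mean \(a\), and to bound \(I\) and \(J\) by different elementary estimates in each region.

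\textbf{Crude bounds.} Since \(\E[(X-x)^2]=a+(x-a)^2\), we have \(I+J=\tfrac12\bigl(a+(x-a)^2\bigr)\). Hence
\[
  IJ\le \min(I,J)\cdot\tfrac12\bigl(a+(x-a)^2\bigr).
\]
It therefore suffices to control the smaller of the two one-sided second moments relative to \(x^3p(x)\).

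\textbf{Tail estimates.} On the region \(x\ge a\) the smaller one is \(J\). I will bound it using the endpoint integration by parts recorded in the proof of Proposition~\ref{prop:Ca-bounds}. Concretely:
\begin{itemize}
  \item Write \(J(x)=\int_x^\infty \overline{\mathrm P}^1(u)\,du\), with \(\overline{\mathrm P}^1(u)=\int_u^\infty\overline P\).
  \item Use the log-concavity of \(p\) (valid for \(a\ge1\)) to get the Mills-type bound \(\overline P(u)\le p(u)\,u/(u-a+1)\) for \(u>a-1\).
  \item Integrating twice gives \(J(x)\le p(x)\,x^3/(x-a+1)^3\), up to a factor close to \(1\).
\end{itemize}
Symmetrically, on \(x\le a\) I will use \(P(u)\le p(u)\,u/(a-u)\), which follows from \((up(u))'=(a-u)p(u)\). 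This yields
\[
  I(x)\le p(x)\,x^3/(a-x)^3,\qquad x\le a-c\sqrt a .
\]

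\textbf{Central window.} Inside \(|x-a|\le R\sqrt a\) these tail estimates degenerate. There I will instead use the trivial bounds \(I,J\le\tfrac12\bigl(a+(x-a)^2\bigr)\) only for the larger factor. For the smaller factor I will use \(J(x)\le \tfrac12\E[(X-a)_+^2]\le a/4\) (by the symmetry of the variance split), or better a Cauchy–Schwarz bound. Combined with the lower bound \(p(x)\ge \exp(-(x-a)^2/(2x))\cdot c/\sqrt{2\pi a}\) from Robbins' Stirling bound (as in Theorem~\ref{thm:wass_gamma_stein-gamma}), this gives
\[
  C^2_{a,1}(x)\lesssim \frac{4\cdot (a/4)\cdot \tfrac12\bigl(a+R^2a\bigr)}{x^3}\sqrt{2\pi a}\,e^{R^2/2}\cdot\frac1c .
\]
This is of order \(a^{-1/2}\) with an explicit constant depending on \(R\).

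\textbf{Tuning.} The remaining task is to choose \(R\) (say \(R=1\)) and use \(a\ge16\) so that:
\begin{itemize}
  \item \(x\ge a-R\sqrt a\ge \tfrac34 a\) in the window, which controls \(x^{-3}\) by \((\tfrac43)^3a^{-3}\);
  \item in both tails, the expression \(\frac{1}{2}(a+(x-a)^2)/|x-a\pm1|^3\le a^{-1/2}\cdot\mathrm{const}\) holds, since \(|x-a|\ge\sqrt a\).
\end{itemize}
The region near \(0\) needs a separate check but is harmless, because \(C^2_{a,1}\to 2/(a+2)\le a^{-1/2}\) there and the \(x\le a\) tail bound covers it.

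\textbf{Main obstacle.} I expect the difficulty to be arithmetic rather than structural: the naive central-window estimate likely produces a constant larger than \(5/2\). In that case I would sharpen it by replacing the bound \(a/4\) with the exact Gaussian-like quantities \(I(a),J(a)\le a/2\cdot\tfrac12\). I would also bound \(\min(I,J)\) via \(\E[(X-x)_+^2]\le \E[(X-a)_+^2]\) for \(x\ge a\), keep the factor \(e^{t^2/2}\) together with the decay of the larger factor, and check numerically that \(a=16\) is the worst case, since the bound improves in \(a\).
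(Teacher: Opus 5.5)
\textbf{There is a genuine gap: the central window fails for a structural reason, not an arithmetic one.}

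Your reduction \(IJ\le\min(I,J)\cdot\tfrac12\{a+(x-a)^2\}\) is correct, and so are your tail estimates. In fact they hold with no correction factor: \(J(x)\le x^3p(x)/(x-a+1)^3\) for \(x>a-1\), and \(I(x)\le x^3p(x)/(a-x)^3\) for \(x<a\).

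Put \(|x-a|=t\sqrt a\) and let \(a\to\infty\) with \(t\) fixed. Your tail bounds then give \(\sqrt a\,C_{a,1}^{2}(x)\le 2(1+t^2)/t^3\). This is below \(5/2\) only for \(t\gtrsim1.29\), and it equals \(4\) at \(t=1\), so \(R=1\) already fails. Inside the window, bounding the smaller factor by its central value (about \(a/4\)) and using the Gaussian size of \(p(x)\) gives at best \(\tfrac12\sqrt{2\pi}\,(1+t^2)e^{t^2/2}\). That exceeds \(5/2\) once \(t\gtrsim0.73\) and is about \(7.8\) at \(t=1.3\). The better of your two bounds at each \(t\) still peaks at about \(4\) near \(t\approx1\). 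Hence no choice of \(R\) closes the argument, and a numerical check at \(a=16\) cannot cover large \(a\). This happens even though the true limit of \(\sqrt a\,e_a\) is \(\sqrt{\pi/8}\approx0.63\), so all the loss comes from the estimates.

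The reason is that a central-value bound on \(\min(I,J)\) discards exactly the Gaussian decay needed to offset the growth \(e^{t^2/2}\) of \(1/p(x)\). Meanwhile the Mills-type bounds blow up as \(t\to0\). Separately, your claim \(J(a)\le a/4\) is false. One has \(\E[(X-a)_+^2]=a\{p(a)+\overline P_{a,1}(a)\}>a/2\) because the Gamma law is right-skewed; only \(I(a)<a/4\) holds.

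\textbf{The missing ingredient is a single bound on the smaller one-sided moment, normalized by \(x^3p(x)\), that is sharp at the centre and decays correctly away from it.} The paper obtains this by comparing the density with itself rather than bounding \(p(x)\) from below.

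With \(n=a-1\) and \(x=n-\tau\sqrt n\le n\), write \(I(x)=\tfrac12\int_0^x s^2p(x-s)\,ds\). Substituting \(s=xz/\sqrt n\) and using \(\log(1-u)\le-u-u^2/2\) gives \(p(x-s)/p(x)=(1-s/x)^ne^{s}\le e^{-\tau z-z^2/2}\). Hence \(I(x)\le x^3p(x)L(\tau)/(2n^{3/2})\), where \(L(\tau)=\int_0^\infty z^2e^{-\tau z-z^2/2}\,dz\). The factor \(x^3p(x)\) cancels in \(C_{a,1}^{2}\), so no Stirling lower bound is needed.

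Combining this with \(J\le I+J\) yields \(\sqrt a\,C_{a,1}^{2}(x)\le L_n(\tau)\) for an explicit \(L_n\) that is decreasing in \(\tau\). Its value at \(\tau=0\) is \(\sqrt{n+1}(n+2)n^{-3/2}\sqrt{\pi/2}\le5/2\). On \(x\ge a-1\), the same substitution gives the exact representation \(J(x)=x^3p(x)R_n(\tau)/(2n^{3/2})\), which is again reduced to \(\tau=0\) by monotonicity.

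This one family of bounds is Gaussian-sharp at \(\tau=0\) and decays like \(\tau^{-3}\). That is precisely the interpolation between your central and tail regimes that the proposal lacks. Note also that the paper splits at the mode \(a-1\), not at the mean.
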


\begin{proof}
We take \(b=1\) and write \(I=I_{a,1}\), \(J=J_{a,1}\),
\(p=p_{a,1}\). Let \(X\sim\mathcal G_{a,1}\). By Fubini,
\[
  I(x)=\frac12\E[(x-X)_+^2],
  \qquad
  J(x)=\frac12\E[(X-x)_+^2],
\]
so
\begin{equation}
  I(x)+J(x)=\frac12\{a+(x-a)^2\}.
  \label{eq:iplusj}
\end{equation}
Put \(n=a-1\). If \(0<x\le n\), set
\(\tau=(n-x)/\sqrt n\). With \(s=x-y\),
\[
  I(x)=\frac12\int_0^x s^2p(x-s)\,ds.
\]
For \(0<s<x\),
\[
  \frac{p(x-s)}{p(x)}
  =
  \left(1-\frac{s}{x}\right)^n e^s .
\]
Taking \(s=xz/\sqrt n\) and using \(x=n-\tau\sqrt n\), the inequality
\(\log(1-u)\le -u-u^2/2\) gives
\[
  \frac{p(x-s)}{p(x)}
  \le e^{-\tau z-z^2/2}.
\]
Thus
\[
  I(x)\le\frac{x^3p(x)}{2n^{3/2}}L(\tau),
  \qquad
  L(\tau):=\int_0^\infty z^2e^{-\tau z-z^2/2}\,dz .
\]
Since \(J(x)\le I(x)+J(x)\), \eqref{eq:iplusj} yields
\[
  \sqrt a\,C_{a,1}^{2}(x)
  \le
  L_n(\tau):=
  \frac{\sqrt{n+1}}{n^{3/2}}
  \{n+1+(\tau\sqrt n+1)^2\}L(\tau).
\]
Moreover \(L_n\) is decreasing: differentiating and using
\(L'(\tau)=-\int_0^\infty z^3e^{-\tau z-z^2/2}\,dz\), one obtains
\(L_n'(\tau)\le0\) after one integration by parts. Hence
\[
  L_n(\tau)\le L_n(0)
  =
  \frac{\sqrt{n+1}(n+2)}{n^{3/2}}\sqrt{\frac{\pi}{2}}
  \le\frac52
\]
for \(a\ge4\). Therefore
\[
  \sqrt a\,C_{a,1}^{2}(x)\le\frac52,
  \qquad 0<x\le a-1 .
\]

If \(x\ge n\), set \(\tau=(x-n)/\sqrt n\). With \(s=y-x\),
\[
  J(x)
  =
  \frac12\int_0^\infty s^2p(x+s)\,ds.
\]
Taking \(s=xz/\sqrt n\) and using \(x=n+\tau\sqrt n\),
\[
  J(x)=\frac{x^3p(x)}{2n^{3/2}}R_n(\tau),
\]
where
\[
  R_n(\tau):=
  \int_0^\infty z^2
  \exp\left\{
    n\log\left(1+\frac z{\sqrt n}\right)
    -(\sqrt n+\tau)z
  \right\}\,dz .
\]
Since \(I(x)\le I(x)+J(x)\), \eqref{eq:iplusj} gives
\[
  \sqrt a\,C_{a,1}^{2}(x)
  \le
  \widetilde R_n(\tau):=
  \frac{\sqrt{n+1}}{n^{3/2}}
  \{n+1+(\tau\sqrt n-1)^2\}R_n(\tau).
\]
Again \(\widetilde R_n\) is decreasing: differentiating under the integral
sign and integrating by parts in \(z\) gives
\(\widetilde R_n'(\tau)\le0\). Hence
\[
  \widetilde R_n(\tau)\le\widetilde R_n(0)
  =
  \frac{e^n(n+2)\mathrm{ExpIntegralE}(-n-2,n)}{\sqrt{n+1}}
  \le\frac52
\]
for \(a\ge16\), where
\[
  \mathrm{ExpIntegralE}(\nu,z)
  :=
  \int_1^\infty e^{-zt}t^{-\nu}\,dt .
\]
Thus
\[
  \sqrt a\,C_{a,1}^{2}(x)\le\frac52,
  \qquad x\ge a-1 .
\]
The two ranges cover \((0,\infty)\), and the result follows.
\end{proof}

\end{document}